\numberwithin{equation}{section}
\ifpdf \usepackage[pdftex,pdfstartview=FitH,pdfpagemode=none,colorlinks,bookmarks,linkcolor=blue]{hyperref} \else  \usepackage[hypertex]{hyperref} \fi
\newtheorem{theorem}{Theorem}[section]
\newtheorem{lemma}[theorem]{Lemma}
\newtheorem{corollary}[theorem]{Corollary}
\newtheorem{definition}[theorem]{Definition}
\newtheorem{proposition}[theorem]{Proposition}
\newtheorem{remark}[theorem]{Remark}
\theoremstyle{definition}
\newcommand{\cL}{\mathcal{L}}
\newcommand{\cN}{\mathcal{N}}
\newcommand{\cO}{\mathcal{O}}
\newcommand{\cX}{\mathcal{X}}
\newcommand{\bC}{\mathbb{C}}
\newcommand{\bR}{\mathbb{R}}
\newcommand{\bZ}{\mathbb{Z}}
\newcommand{\bQ}{\mathbb{Q}}
\newcommand{\bN}{\mathbb{N}}
\newcommand{\bT}{\mathbb{T}}
\newcommand{\bS}{\mathbb{S}}
\newcommand{\bfe}{\mathbf{e}}
\newcommand{\bfm}{\mathbf{m}}
\newcommand{\bfn}{\mathbf{n}}
\newcommand{\GL}{\operatorname{GL}}
\newcommand{\rank}{\operatorname{rank}}
\newcommand{\Arg}{\operatorname{Arg}}
\newcommand{\Aut}{\operatorname{Aut}}
\newcommand{\imag}{\mathrm{i}}
\newcommand\diag[1]{\operatorname{diag}\left(#1\right)}
\newcommand{\onto}{\xymatrix{\ar@{>>}[r]&}}
\newcommand{\da}[4]{\xymatrix{#1 \ar@<.5ex>[r]^{#2} \ar@<-.5ex>[r]_{#3} & #4}}
\begin{document}

\title[Rigidity of commutative non-hyperbolic actions]{Rigidity of commutative non-hyperbolic actions by toral automorphisms}
\author{Zhiren Wang}
\address{\newline Department of Mathematics, Princeton University\newline Princeton, NJ 08544, USA\newline {\tt zhirenw@math.princeton.edu}}
\setcounter{page}{1}

\begin{abstract}
Berend obtained necessary and sufficient conditions on a $\bZ^r$-action $\alpha$ on a torus $\bT^d$ by toral automorphisms in order for every orbit be either finite or dense. One of these conditions is that on every common eigendirection of the $\bZ^r$-action there is an element $\mathbf n \in \bZ ^ r$ so that $\alpha^\mathbf n$ expands this direction. In this paper, we investigate what happens when this condition is removed; more generally, we consider a partial orbit $\left\{ \alpha^\mathbf n.x : \mathbf n \in \Omega \right\}$ where $\Omega$ is a set of elements which acts in an approximately isometric way on a given set of eigendirections. This analysis is used in an essential way in the work of the author with E. Lindenstrauss classifying topological self-joinings of maximal $\bZ^r$-actions on tori for $r\geq 3$.
\end{abstract}

\maketitle
{\small\tableofcontents}

\section{Introduction}

\subsection{Background}

In the landmark paper \cite{F67}, Furstenberg showed that any closed subset of $\bT=\bR/\bZ$ which is invariant under both $x\mapsto 2x$ and $x\mapsto 3x$ is either $\bT$ itself or a finite collection of rational points. This theorem was extended by Berend \cite{B83} to commutative semigroup actions on higher dimensional tori. A special case is the following:

\begin{theorem}(\cite{B83})\label{Berend} Let $\alpha$ be a faithful $\bZ^r$-action on $\bT^d=\bR^d/\bZ^d$ by toral automorphisms given by a group embedding $\alpha:\bfn\mapsto\alpha^\bfn$ of $\bZ^r$ into $\GL(d,\bZ)=\Aut(\bT^d)$ satisfying the following three conditions:\begin{enumerate}
\item $r \geq 2$;

\item $\exists\bfn\in\bZ^r$ such that $\alpha^\bfn$ is a totally irreducible toral automorphism;

\item for any common eigenvector $v\in\bC^d$ of $\alpha(\bZ^r)$, there exists $\bfn\in \bZ^r$ such that $|\alpha^\bfn.v|>|v|$.\end{enumerate}
Then $\forall x\in\bT^d$,  $\{\alpha^\bfn.x: \bfn\in\bZ^r\}$ is dense in $\bT^d$ unless $x$ is rational.
\end{theorem}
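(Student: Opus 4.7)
My plan is to study the orbit closure $X := \overline{\{\alpha^\bfn.x : \bfn \in \bZ^r\}}$ and show that either $X = \bT^d$ or $X$ is finite. In the latter case $x$ is a periodic point of the action, in particular a fixed point of some power $\alpha^{k\bfn_0}$ of the totally irreducible element; total irreducibility excludes roots of unity among the eigenvalues, so $1$ is not an eigenvalue of $\alpha^{k\bfn_0}$, and hence any such fixed point in $\bT^d$ is rational.

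The first step is to analyse the translation stabiliser $H := \{y \in \bT^d : y + X = X\}$, a closed $\alpha$-invariant subgroup of $\bT^d$. Its identity component $H^0$ is an $\alpha$-invariant subtorus, corresponding to an $\alpha$-invariant rational subspace $V$ of $\bR^d$. By condition (2), the only such $V$ are $\{0\}$ and $\bR^d$. If $V = \bR^d$ then $H = \bT^d$, hence $X = \bT^d$ and we are done; otherwise $H$ is finite. Passing to the quotient torus $\bT^d / H$, we may assume the translation stabiliser of $X$ is trivial.

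It then suffices to show that, in this trivial-stabiliser case, $X$ is a single point. Suppose not; then $X - X$ contains nonzero elements $\delta_n \to 0$. Decomposing $\bC^d$ into common eigenspaces of $\alpha(\bZ^r)$, I would choose for each $n$ an element $\alpha^{\bfm_n}$ that simultaneously rescales $\delta_n$ to a fixed size while attenuating all its components in eigenspaces other than a target eigendirection $v$. After extracting a subsequence, $\alpha^{\bfm_n} \delta_n$ converges to a nonzero vector parallel to $v$, and the limit again lies in $X - X$. Condition (3) then gives some $\alpha^{\bfn_v}$ expanding $v$; iterating it on this limit and taking closures yields the full one-parameter subgroup tangent to $v$ inside the translation stabiliser of $X$, contradicting triviality.

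The main obstacle is the rescaling/attenuation step. It amounts to studying the multipliers $\chi_i(\bfn) = \log|\lambda_i(\bfn)|$ as linear functionals on $\bZ^r$ and finding $\bfm_n \in \bZ^r$ along which $\chi_i(\bfm_n)$ remains bounded while $\chi_j(\bfm_n) \to -\infty$ for all other common eigendirections $j$. This requires sufficient linear independence of the $\chi_i$, which is where $r \geq 2$ is essential: in rank one the $\chi_i$ are proportional and no such separation is possible. Once the extraction is successful, finiteness of $X$, and hence rationality of $x$, follows immediately.
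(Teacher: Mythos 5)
The paper does not re-prove Berend's theorem; it cites it and generalizes it, with Theorem~\ref{denseorcentral'} specializing to the hyperbolic case $S=\emptyset$, $V_{\langle S\rangle}=\{0\}$. So the relevant comparison is to the machinery the paper develops in Sections~2--4. Against that background, your sketch captures the right spirit (concentrate small difference vectors onto a single eigendirection using the Lyapunov linear forms, then grow) but it has two concrete gaps.

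First, you conflate the difference set $X-X$ with the translation stabiliser $\{y : y+X=X\}$. Producing a line segment, or even a full line, inside $X-X$ does not place it in the stabiliser and therefore does not contradict the stabiliser being trivial. Even the stronger conclusion $X-X=\bT^d$ does not force $X=\bT^d$: take $X=[0,\tfrac12]\subset\bT^1$, which is proper but satisfies $X-X=\bT^1$. This is exactly the issue the paper's Lemma~\ref{minfull} is built to handle: it works with \emph{minimal} invariant sets $M, M'$, shows $M+M'=X$ is impossible via a congruence-subgroup and $p$-power torsion argument (Proposition~\ref{minunion} together with approximating an arbitrary point by $p$-power torsion points fixed by $G^{(f)}$), and only then feeds the pattern into Proposition~\ref{nonisolated}. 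Your sketch has no analogue of this step, and without it the concentration argument cannot be closed.

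Second, the passage from ``a nonzero vector $cv$ in a single eigendirection lies in $X-X$'' to ``a dense line'' is not a matter of merely iterating an expanding element $\alpha^{\bfn_v}$. The points $\alpha^{k\bfn_v}(cv)$ are spaced geometrically, and their closure modulo $\bZ^d$ need not be a line, let alone dense. To produce an actual line segment of prescribed length one needs the finer coarse Lyapunov decomposition and a careful choice of $\bfn$ staying near a hyperplane in $\bR^r$ (this is Lemmas~\ref{nongroupapprox}, \ref{netlongline} and Proposition~\ref{line} in the paper); and to go from a line to density one needs a number-theoretic input — the paper invokes the Evertse--Schlickewei--Schmidt bound on solutions to unit equations to show that for all but finitely many $\bfn$ the rotated line is not annihilated by any nontrivial character (Lemmas~\ref{nonequivunits}, \ref{eigenirrational}, Proposition~\ref{linerotation}). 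Your sketch does not mention this ingredient, and it is essential: total irreducibility alone enters here, to guarantee the ratios $\zeta_{i}^\bfn/\zeta_{i_0}^\bfn$ take infinitely many values. Finally, the ``rescaling/attenuation'' step that you flag as the main obstacle is indeed delicate: the Lyapunov functionals $\lambda_i$ need not be in general position, and what you actually obtain is a vector in a \emph{coarse} Lyapunov subspace (a sum of $V_i$'s with positively-proportional $\lambda_i$), not a single eigenline — which is why the subsequent line construction is nontrivial.
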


Here a toral automorphism $\phi\in\GL(d,\bZ)$ is said to be {\it irreducible} if there is no proper $\phi$-invariant subtorus of positive dimension in $\bT^d$, and it is {\it totally irreducible} if any power $\phi^k, k\neq 0$ is irreducible.

The aim of this paper is to investigate what happens when the hyperbolicity assumption (3) in Theorem \ref{Berend} fails.

Our result is going to cover two situations. The first one simply deals with $\bZ^r$-actions that don't satisfy assumption (3), i.e. there are one or several common eigenvectors whose norms are preserved by the group action.

In the second more general setup, we take a $\bZ^r$-action and fix several common eigenvectors $v_i\in\bC^d, i\in S$. Instead of studying the full $\bZ^r$-action. we only allow ourselves to apply those $\alpha^\bfn$'s satisfying $\frac{|\alpha^\bfn.v|}{|v|}\in(e^{-\epsilon},e^\epsilon), \forall i\in S$ to a point $x\in\bT^d$ and ask how the resulting orbit distributes in $\bT^d$. This case is more delicate as typically the elements we apply do not form a subgroup of $\bZ^r$.

In Theorem \ref{denseorcentral} we will give an analogue to Theorem \ref{Berend} in these situations with assumptions (1) and (2) properly reformulated.

Another question surrounding Berend's theorem is what happens when the action is no longer irreducible. In particular, it is interesting to investigate the diagonal action $\alpha_\triangle:\bZ^r\curvearrowright\bT^d\times\bT^d$ by $\alpha$, where $\alpha_\triangle^\bfn.(x,y)=(\alpha^\bfn.x,\alpha^\bfn.y)$, and ask what the orbit closures are. In a forthcoming joint paper \cite{LW10} with E.Lindenstrauss, we give a classification of orbit closures when $r\geq 3$ and the action $\alpha$ is Cartan. Theorem \ref{denseorcentral} is used as a key lemma in \cite{LW10}.

Before going further we give a more explicit form of the group actions described in Theorem \ref{Berend}.

\subsection{Number-theoretical description of the group action}

The irreducibility assumption on $\alpha$ has a number-theoretical interpretation.

Consider a number field $K$ of degree $d$. Suppose $K$ has $r_1$ real embeddings $\sigma_1,\cdots,\sigma_{r_1}$ and $r_2$ conjugate pairs of complex embeddings $(\sigma_{r_1+1},\sigma_{r_1+r_2+1})$, $(\sigma_{r_1+2},\sigma_{r_1+r_2+2})$, $\cdots$, $(\sigma_{r_1+r_2},\sigma_{r_1+2r_2})$, then $r_1+2r_2=d$ and the group of units $U_K$ has rank $r_1+r_2-1$. Recall $K\otimes_\bQ\bR\cong\bR^{r_1}\oplus\bC^{r_2}$ where the embedding of $K$ into $\bR^{r_1}\oplus\bC^{r_2}$ is given by
\begin{equation}\label{fieldemb}\sigma:\theta\mapsto\big(\sigma_1(\theta),\cdots,\sigma_{r_1}(\theta),\sigma_{r_1+1}(\theta),\cdots,\sigma_{r_1+r_2}(\theta)\big).\end{equation}

$K$ acts multiplicatively on $K\otimes_\bQ\bR$ by $\theta.(\mu\otimes x)=\theta\mu\otimes x, \forall \theta,\mu\in K, x\in\bR$; or equivalently, on $\bR^{r_1}\oplus\bC^{r_2}$ by
\begin{equation}\label{fieldmulti}\theta.(x_1,\cdots,x_{r_1+r_2})=\big(\sigma_1(\theta)x_1,\cdots,\sigma_{r_1+r_2}(\theta)x_{r_1+r_2}\big).\end{equation}
This multiplicative action is compatible with $\sigma$ in the sense that \begin{equation}\label{embmulti}\theta.\sigma(\mu)=\sigma(\theta\mu), \forall t,s\in K.\end{equation}

The following result makes the translation from the $\bT^d$ setting to a number-theoretical one. It is a special case of a more general fact, for which we refer to Schmidt \cite{S95}*{\S 7 \& \S 29} and Einsiedler-Lind \cite{EL04} (see also Einsiedler-Lindenstrauss \cite{EL03}*{Prop. 2.1}).

\begin{proposition}[cf.\ e.g.~\cite{W10}*{Prop. 2.13}]\label{Gfield}Let $\alpha:\bZ^r\hookrightarrow\GL(d,\bZ)$ be a $\bZ^r$-action on $\bT^d$ such that $\alpha(\bZ^r)$ contains an irreducible toral automorphism. Then there are \begin{itemize}
\item a number field $K$ of degree $d$ with $r_1$ real embeddings $\sigma_1\cdots,\sigma_{r_1}$ and $r_2$ conjugate pairs of complex embeddings $(\sigma_{r_1+1}, \sigma_{r_1+r_2+1})$, $(\sigma_{r_1+2}, \sigma_{r_1+r_2+2})$, $\cdots$, $(\sigma_{r_1+r_2}, \sigma_d)$ where $r_1+2r_2=d$;
\item a common eigenbasis in $\bC^d$ with respect to which $\forall \bfn\in\bZ^r$, $\alpha^\bfn$ can be diagonalized as $\diag{\zeta_1^\bfn,\zeta_2^\bfn,\cdots,\zeta_d^\bfn}$ where $\zeta_1^\bfn,\cdots,\zeta_{r_1}^\bfn\in\bR$; $\zeta_{r_1+1}^\bfn,\cdots,\zeta_d^\bfn\in\bC$ and $\zeta_{r_1+j}^\bfn=\overline{\zeta_{r_1+r_2+j}^\bfn},\forall j=1,\cdots,r_2$;
\item a group embedding $\zeta:\bfn\mapsto\zeta^\bfn$ of $\bZ^r$ into the group of units $U_K$;
\item a cocompact lattice $\Gamma$ in $\bR^{r_1}\oplus\bC^{r_2}$ that is contained in $\sigma(K)$, where $\sigma$ is given in (\ref{fieldemb});
\end{itemize}
such that:
\begin{itemize}
\item $\zeta_i^\bfn=\sigma_i(\zeta^\bfn),\forall i\in\{1,\cdots,d\},\forall \bfn\in \bZ^r$;
\item $\forall \bfn\in \bZ^r$, multiplication by $\zeta^\bfn$ on $\bR^{r_1}\oplus\bC^{r_2}$ as in (\ref{fieldmulti}) preserves $\Gamma$, hence induces an action on $(\bR^{r_1}\oplus\bC^{r_2})/\Gamma$ by \[\zeta^\bfn.(\tilde x\mathrm{\ mod\ }\Gamma)=(\zeta^\bfn.\tilde x\mathrm{\ mod\ }\Gamma), \forall\tilde x\in \bR^{r_1}\oplus\bC^{r_2};\]
\item the action $\alpha$ on $\bT^d$ is algebraically conjugate to the multiplicative action of $\bZ^r\curvearrowright(\bR^{r_1}\oplus\bC^{r_2})/\Gamma$ via $\zeta$, i.e.~there is a continuous group isomorphism $\psi:\bT^d\overset\sim\to(\bR^{r_1}\oplus\bC^{r_2})/\Gamma$ such that $\alpha^\bfn.x=\psi^{-1}(\zeta^\bfn.\psi(x))$, $\forall\bfn\in\bZ^r, \forall x\in\bT^d$.
\end{itemize}\end{proposition}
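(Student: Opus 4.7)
The plan is to reduce the proposition to standard facts in algebraic number theory by exploiting the irreducible element provided by the hypothesis.

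First, I would pick $\bfn_0 \in \bZ^r$ with $\phi := \alpha^{\bfn_0}$ irreducible. Irreducibility as a toral automorphism means that the characteristic polynomial $p(x) \in \bZ[x]$ of $\phi$ is irreducible over $\bQ$; otherwise a nontrivial rational invariant subspace would descend to a proper invariant subtorus. Since $\phi \in \GL(d,\bZ)$, the constant term of $p$ is $\pm 1$, so any root $\lambda$ of $p$ is a unit in the ring of integers of $K := \bQ(\lambda)$, a number field of degree exactly $d$. Its $d$ embeddings into $\bC$ partition as the $r_1$ real embeddings and $r_2$ complex conjugate pairs; the corresponding eigenvalues of $\phi$ in $\bC$ are precisely $\sigma_i(\lambda)$, and they are all distinct because $p$ is separable, yielding a common $\bC$-eigenbasis with the required reality/conjugacy structure.

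Next, since the minimal polynomial of $\phi$ on $\bQ^d$ coincides with its characteristic polynomial, the centralizer of $\phi$ in $M_d(\bQ)$ is the subalgebra $\bQ[\phi]$, which is isomorphic to $K$ via $\phi \leftrightarrow \lambda$. Every $\alpha^\bfn$ commutes with $\phi$, so it corresponds to some $\zeta^\bfn \in K$; the invertibility of $\alpha^\bfn$ over $\bZ$ forces $\zeta^\bfn$ to be a unit of the order $\cO := \bZ[\phi] \cap \{\text{elements acting integrally}\}$, hence a unit of $\cO_K$. This gives the group embedding $\zeta : \bZ^r \hookrightarrow U_K$ and the eigenvalue identity $\zeta_i^\bfn = \sigma_i(\zeta^\bfn)$ immediately from the common diagonalization. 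Faithfulness of $\alpha$ and irreducibility guarantee faithfulness of $\zeta$.

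To produce the lattice, I would view $\bQ^d$ as a rank-one module over $K = \bQ[\phi]$, necessarily isomorphic to $K$ itself. Under this isomorphism $\bZ^d$ becomes a full-rank $\bZ$-submodule $I$ of $K$, i.e.\ a fractional ideal of the order $\cO$. Set $\Gamma := \sigma(I) \subset \bR^{r_1} \oplus \bC^{r_2}$; this is a cocompact lattice because $\sigma$ embeds $K$ discretely with cocompact image on any fractional ideal (Minkowski). The $\bZ$-linear isomorphism $\bZ^d \cong I$ extends uniquely to an $\bR$-linear isomorphism $\bR^d \overset\sim\to \bR^{r_1} \oplus \bC^{r_2}$ intertwining $\phi$ with multiplication by $\sigma(\lambda)$ (by the compatibility (\ref{embmulti})); since $\alpha^\bfn$ is a $\bQ[\phi]$-module endomorphism corresponding to $\zeta^\bfn$, the same isomorphism intertwines $\alpha^\bfn$ with multiplication by $\zeta^\bfn$ as in (\ref{fieldmulti}). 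Passing to the quotient by $\Gamma$ yields the desired conjugacy $\psi$.

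The main subtlety lies not in any single step but in keeping track of orders versus the full ring of integers: $\alpha(\bZ^r)$ may land in the unit group of a proper order $\cO \subsetneq \cO_K$, and $\bZ^d$ may correspond to a fractional $\cO$-ideal that is not a full $\cO_K$-ideal. Nothing in the conclusion requires $\Gamma = \sigma(\cO_K)$ or $\zeta(\bZ^r) = U_K$—only that $\zeta^\bfn \in U_K$ and $\Gamma \subset \sigma(K)$—so this causes no trouble, but one has to be careful in the verification. Everything else is a routine unpacking of the commutant-of-an-irreducible-matrix principle together with the Minkowski embedding, which is why the claim can be cited from \cite{S95,EL04,EL03,W10}.
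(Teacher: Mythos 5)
Your argument is correct and is essentially the standard number-theoretic construction that the cited references (Schmidt \cite{S95}, Einsiedler--Lind \cite{EL04}, Wang \cite{W10}) carry out; the paper itself does not reprove the proposition but only cites it. In particular, the key steps you identify — that irreducibility of $\phi=\alpha^{\bfn_0}$ forces its characteristic polynomial to be irreducible, that the commutant of a non-derogatory matrix is $\bQ[\phi]\cong K$, that $\bQ^d$ is a one-dimensional $K$-vector space carrying $\bZ^d$ as a fractional ideal, and that the Minkowski embedding transports everything to $\bR^{r_1}\oplus\bC^{r_2}$ — are exactly the ingredients in the standard proof, and your closing caveat about proper orders versus $\cO_K$ correctly pins down the only place where carelessness would lead to an overstatement (one must not claim $\Gamma=\sigma(\cO_K)$).
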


\begin{remark}\label{alphazetairr} For $\bfn\in\bZ^r$, $\alpha^\bfn$ is an irreducible toral automorphism if and only if $\zeta^\bfn$ doesn't belong to any non-trivial proper subfield of $K$. This is simply because $\zeta_i^\bfn$ is an eigenvalue of $\alpha^\bfn$. In particular, $\alpha^\bfn$ is totally irreducible if and only if $(\zeta^{\bfn})^k\notin L$ for all non-trivial proper subfield $L$ of $K$ and $k\in\bN$.\end{remark}

\subsection{Notations} \label{Xnota}Throughout the rest of paper we consider a $\bZ^r$-action $\alpha$ as in Proposition \ref{Gfield}. We will write $G$ for the acting group $\bZ^r$, and let $K$, $\Gamma$, $\psi$  and $\zeta$ be as in Proposition \ref{Gfield}.

Write $I=\{1,\cdots,r_1+r_2\}$.

For $1\leq i\leq r_1$, let $V_i$ be the $i$-th copy of $\bR$ in $\bR^{r_1}\oplus\bC^{r_2}$; while for $r_1< i\leq r_1+r_2$, let $V_i$ be the $(i-r_1)$-th copy of $\bC$. Then $\bR^{r_1}\oplus\bC^{r_2}$ rewrites $\bigoplus_{i\in I}V_i$. We write
\begin{equation}X=(\bR^{r_1}\oplus\bC^{r_2})/\Gamma=(\bigoplus_{i\in I}V_i)/\Gamma,\end{equation} and let $\pi$ be the canonical projection from  $\bR^{r_1}\oplus\bC^{r_2}$ to $X$. $X$ is a compact abelian group.

Moreover, as $\psi$ is a continuous group morphism from $\bT^d=\bR^d/\bZ^d$ to $X$. It can be easily seen that it lifts to a non-degenerate real linear map $\tilde\psi$ from $\bR^d$ to $(\bR^{r_1}\oplus\bC^{r_2})\cong\bR^d$ such that $\tilde\psi(\bZ^d)=\Gamma$ and $\alpha^n.\tilde x=\tilde\psi^{-1}(\zeta^\bfn.\tilde\psi(\tilde x)), \forall\bfn\in\bZ^r, \forall\tilde x\in\bR^d$.

The pushforward of the usual Euclidean distance by $\pi$ induces a Riemannian metric on $X$ and makes $X$ a locally Euclidean metric space. With this metric, the distance from a point $x$ to the origin $0$ in $X$ is given by
\begin{equation}\|x\|=\min_{\substack{\tilde x\in\bR^{r_1}\oplus\bC^{r_2}\\\pi(\tilde x)=x}}|\tilde x|,\end{equation} where $|\tilde x|=(\sum_{i\in I}|\tilde x_i|^2)^{\frac12}$, $\tilde x_i$ denoting the $V_i$ coordinate of $\tilde x$. The distance between $x,x'\in X$ is just $\|x-x'\|$.

For $x\in X$ and $D\subset G$, write $D.x=\{\zeta^\bfn.x: \bfn\in D\}$.

\begin{definition}\label{torsiontrans} For a real linear subspace $V$ of $\bR^{r_1}\oplus\bC^{r_2}$, a point $z\in X$ is a {\bf $V$-translated torsion point} if there are a torsion point $x_*\in X$ and a vector $v\in V$ such that $x=x_*+v$.

A {\bf $V$-disc centered at a torsion point} in $X$ is a set of the form $\{x_*+v: v\in V, |v|\leq R\}$ where $x_*$ is a given torsion point and $R$ is a given radius.\end{definition}

\begin{definition}\label{lya}For $i\in I$, the $i$-th {\bf Lyapunov functional} $\lambda_i:\bZ^r\mapsto\bR$ is given by \begin{equation}\lambda_i(\bfn)=\log|\zeta_i^\bfn|.\end{equation}
Moreover, define $\beta_i:\bZ^r\mapsto\bR/2\pi\bZ$ by \begin{equation}\beta_i(\bfn)=\arg\zeta_i^\bfn.\end{equation}\end{definition}

\begin{remark}\label{lyaextension}It is not hard to see $\lambda_i$ and $\beta_i$ are group morphisms. In particular, $\lambda_i$ extends uniquely to a linear map from $(\bR^r)^*$, which we still denote by $\lambda_i$.\end{remark}

In addition, though $\beta_i$ is defined for all $i\in I$, when $i\leq r_1$ it only takes value from $\{0,\pi\}$ modulo $2\pi\bZ$.

Write $\|\beta\|$ for the distance from $\beta\in\bR/2\pi\bZ$ to the trivial element $0$.

\subsection{Non-hyperbolic foliations}\label{ssCLS} The hyperbolicity condition (3) in Theorem \ref{Berend} is actually equivalent to assuming for all $i\in I$ that $\lambda_i$ is not the zero map from $(\bR^r)^*$ .

In this paper, we try to assume less hyperbolicity than \cite{B83} did. One way is to allow the zero map to appear as Lyapunov functionals. This gives an ``isometric subspace'' \begin{equation}\label{Berendcentral}V_{\mathrm{Isom}}=\bigoplus_{i\in I:\lambda_i\equiv 0}V_i\end{equation} that is invariant under the multiplicative action $\zeta$ and cannot be expanded or contracted by any element as $|\zeta_i^\bfn|=1, \forall\bfn$ whenever $\lambda_i\equiv0$.

Another way to lose hyperbolicity is to pose extra restrictions on which group elements from the action that one may apply. More precisely the question is: given $S\subset I$, for a generic $x\in \bT^d$ and $\epsilon>0$, is the partial orbit $\{\alpha^\bfn.x: \bfn\in\bZ^r\text{ s.t. }|\lambda_i|<\epsilon,\forall i\in S\}$ dense in $\bT^d$?

In fact we will consider even smaller truncations of $\bZ^r$ as in the next definition.

\begin{definition}\label{groupslice}If $S\subset I$, $\epsilon>0$ and $\sigma+H$ is a coset of some subgroup $H \leq G$,  the {\bf $\epsilon$-slice of $\sigma+H$ with respect to $S$} is
\begin{equation}H^\sigma_{\epsilon,S}=\{\bfn\in\sigma+H:|\lambda_i(\bfn)|<\epsilon,\forall i \in S\text{ and }\|\beta_j(\bfn)\|<\epsilon,\forall j\in I\}.\end{equation}When $\sigma+H$ is $H$ itself, simply write $H_{\epsilon,S}$ for $H^\sigma_{\epsilon,S}$.

$\sigma+H$ is said to be {\bf compatible} with $S$ if  $H^\sigma_{\epsilon,S}\neq\emptyset$, for all $\epsilon>0$. \end{definition}

Remark $H_{\epsilon,S}$ contains the identity hence $H$ itself is always compatible. In particular, one wishes to understand the action of the non-empty set \begin{equation}\label{GepsilonS}G_{\epsilon,S}=\{\bfn\in\bZ^r:|\lambda_i(\bfn)|<\epsilon,\forall i \in S\text{ and }\|\beta_j(\bfn)\|<\epsilon,\forall j\in I\}.\end{equation} We emphasize that $G_{\epsilon,S}$, and $H_{\epsilon,S}$ in general, usually don't form a subgroup of $G=\bZ^r$ .

\subsection{The main result}

The following theorem is the main result of this paper.

\begin{theorem}\label{denseorcentral}
Let $\alpha$ be a $\bZ^r$-action on $\bT^d$ by toral automorphisms. Assume that $\alpha^\bfn$ is irreducible for at least one $\bfn \in\bZ^r$, hence the notations from \S\ref{Xnota} are applicable. For any subset $S \subset I$, let $L_S \subset(\bR^r)^*$ be the $\bR$-span of the set $\{ \lambda_j: j \in S \}$ (or $ L_S=\{ 0 \}$ if $S=\emptyset$).  Let $\langle S \rangle \supset S$ be the set of all $i \in I$ such that $\lambda_i \in L_S$. Denote $V_{\langle S \rangle}=\bigoplus_{i \in \langle S \rangle}V_i$. Assume further that
\begin{enumerate}
\item $\dim{L_S}\leq r-2$;
\item $\forall \epsilon>0$, $\exists\bfn \in\bZ^r$ such that $\alpha^\bfn$ is a totally irreducible toral automorphism and $|\lambda_i(\bfn)|<\epsilon,\forall i \in S$.
\end{enumerate}
Then for any $\epsilon>0$, a point $x \in X$ satisfies that
\begin{equation}\label{Xpartialorb}\{ \zeta^\bfn.x: \bfn \in\bZ^r\text{ s.t. }|\lambda_i(\bfn)|<\epsilon,\forall i \in S \}\quad\text{is dense in $X$}\end{equation} if and only if $x$ is not a $V_{ \langle S \rangle}$-translated torsion point.
Equivalently, $\forall \epsilon>0$, $y \in \bT^d$ \begin{equation}\label{Tpartialorb}\{ \alpha^\bfn.y: \bfn \in\bZ^r\text{ s.t. }|\lambda_i(\bfn)|<\epsilon,\forall i \in S \}\quad\text{is dense in $\bT^d$}\end{equation}  if and only if $y$ can not be written as $y_*+v$, where $y_*\in\bT^d$ is a rational point and $v\in\tilde\psi^{-1}(V_{ \langle S \rangle})$.\end{theorem}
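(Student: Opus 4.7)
The ``only if'' direction is direct: if $x = x_* + \pi(v)$ for a torsion point $x_* \in X$ and $v \in V_{\langle S \rangle}$, then for any $\bfn \in G_{\epsilon, S}$ the linear dependence of $\{\lambda_i : i \in \langle S \rangle\}$ on $\{\lambda_j : j \in S\}$ gives $|\lambda_i(\bfn)| \leq C \epsilon$ for all $i \in \langle S \rangle$, and $\zeta$-invariance of $V_{\langle S \rangle}$ forces $\zeta^\bfn.\pi(v) \in \pi(V_{\langle S \rangle})$. Combined with $\zeta^\bfn.x_*$ ranging over the finite orbit of $x_*$, the partial orbit sits in the proper closed set (finite orbit of $x_*$)$+\,\overline{\pi(V_{\langle S \rangle})}$. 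That $V_{\langle S \rangle} \neq V$ follows from $\dim L_S \leq r - 2 < r$ together with $\bigcap_{i \in I} \ker \lambda_i = \{0\}$, which itself is a consequence of faithfulness of $\alpha$ and Kronecker's theorem on algebraic integers all of whose conjugates have unit modulus.

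For the ``if'' direction, write $Y = \overline{\{\zeta^\bfn.x : \bfn \in G_{\epsilon, S}\}}$ and assume $x$ is not $V_{\langle S \rangle}$-translated torsion; the goal is $Y = X$. The plan has two stages. \emph{Stage 1} (hyperbolic stabilizers): for each $i \notin \langle S \rangle$, $\lambda_i$ is linearly independent from $L_S$, so by Dirichlet's principle on the phases $\{\beta_j\}_{j \in I}$ one produces sequences $\bfn_k \in G_{\epsilon,S}$ with $\lambda_i(\bfn_k) \to +\infty$. An unstable-manifold comparison between orbit points $\zeta^{\bfn_k}.x$ and $\zeta^{\bfn_k}.(x+w)$ for small $w$ lying in $Y-Y$ along the direction $V_i$ then forces $Y$ to be saturated along $\pi(V_i)$. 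Iterating over all $i \notin \langle S \rangle$, $Y$ descends to a closed subset $\bar Y$ of the quotient torus $\bar X := X/\overline{\pi(V_{I \setminus \langle S \rangle})}$. \emph{Stage 2} (Berend on the quotient): on $\bar X$ every element of $G_{\epsilon, S}$ acts by a near-isometry, and the subgroup $\Lambda_S := \{\bfn : \lambda_j(\bfn) = 0 \,\forall j \in S\}$ has rank at least $r - \dim L_S \geq 2$. Using the totally irreducible element supplied by assumption~(2), whose image in $\Aut(\bar X)$ remains totally irreducible by construction of $\bar X$, one applies Theorem \ref{Berend} on $\bar X$: the projection of $x$ either has dense orbit -- giving $Y = X$ -- or is torsion, which lifts to $x$ being $V_{\langle S \rangle}$-translated torsion, contradicting the standing assumption.

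The principal obstacle is that $G_{\epsilon, S}$ is not a subgroup of $\bZ^r$, so $Y$ possesses no immediate group-invariance on which standard rigidity arguments can be based. The Stage~1 stabilizer-enlargement must therefore proceed through the approximate additivity $G_{\epsilon_1, S} + G_{\epsilon_2, I} \subset G_{\epsilon_1 + \epsilon_2, S}$, producing the claimed $\pi(V_i)$-invariance of $Y$ only after passing to closures and slightly loosening the slice; the saturation is then upgraded to exact $\pi(V_i)$-invariance by a compactness argument using that $V_i$ is one-dimensional over $\bR$ or $\bC$ and $\pi(V_i)$ has dense image in $X$ along at least one direction. Keeping the slice tight while still generating the full $\pi(V_{I \setminus \langle S \rangle})$-saturation -- and simultaneously verifying that the near-isometric part of $G_{\epsilon,S}$ is rich enough for Stage~2 to succeed -- is the technical core of the argument, and it is the step that genuinely exploits the Diophantine equidistribution of the phase characters $\{\beta_j\}$.
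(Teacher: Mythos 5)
The proposal has several genuine gaps, and the most serious stem from a structural misconception about how the subspaces $\pi(V_i)$ sit inside $X$. Because the action contains an irreducible element, for every $i\in I$ the closure $\overline{\pi(V_i)}$ is a connected closed subgroup invariant under $\zeta$; by irreducibility it must be all of $X$ (this is exactly the argument in Lemma~\ref{eigenirrational}). Consequently:
\begin{itemize}
\item In your ``only if'' direction, the set ``(finite orbit of $x_*$) $+\ \overline{\pi(V_{\langle S\rangle})}$'' is not a proper subset of $X$ --- it equals $X$ whenever $V_{\langle S\rangle}\neq\{0\}$. What makes the orbit non-dense is not that it lies in $\pi(V_{\langle S\rangle})$-cosets but that the $V_{\langle S\rangle}$-component stays in a \emph{bounded} disc, because $G_{\epsilon,S}$ acts on $V_{\langle S\rangle}$ near-isometrically (see Lemma~\ref{torsionorbit} and Lemma~\ref{centralproper}); bounded $V_{\langle S\rangle}$-discs are compact sets of dimension $<d$, hence proper.
\item In Stage~2, the quotient $\bar X = X/\overline{\pi(V_{I\setminus\langle S\rangle})}$ is a single point, so there is no torus on which to invoke Berend's theorem. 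Note this also means that if Stage~1 really delivered $\pi(V_i)$-saturation of $Y$ for any single $i\notin\langle S\rangle$, you would already have $Y=X$ and Stage~2 would be superfluous --- the two stages of your outline are not consistent with each other.
\end{itemize}

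Stage~1 itself is where the real work lies, and your outline glosses over the two most difficult points. First, you need to produce short vectors $w\in Y-Y$ that are \emph{not} in $V_{\langle S\rangle}$, i.e.\ a $V_{\langle S\rangle}$-pattern, before you can expand anything. Nothing in the proposal supplies this; the paper needs its entire minimal-set machinery --- Proposition~\ref{minunion}, the $p$-adic congruence filtration $G^{(f)}$ in Lemma~\ref{minfull}, and Proposition~\ref{minclassify} --- just to guarantee that the orbit closure contains a $V_{\langle S\rangle}$-translated torsion point near which such a pattern exists. Second, even granted small $w\notin V_{\langle S\rangle}$, one cannot in general concentrate $w$ onto a single $V_i$ as you assert; the achievable concentration is onto a full coarse Lyapunov subspace $V_{[\lambda]}$ (Proposition~\ref{concentration}), which may combine several $V_i$'s. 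For complex $V_i$'s the resulting lines $L\subset V_{[\lambda]}$ need not have dense projection $\pi(L)$, and the paper must invoke the Evertse--Schlickewei--Schmidt bound on solutions of unit equations (Proposition~\ref{linerotation}) to find $\bfn$ with $\pi(\zeta^\bfn.L)$ dense. None of this appears in the proposal. As written, the argument does not close any of these gaps and does not constitute a proof.
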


Recall $\tilde\psi$ is the lift of $\psi$ described in \S\ref{Xnota}.

The restricted dynamics studied here are special cases of non-expansive subdynamics of $\bZ^d$-actions defined and investigated by Boyle and Lind \cite{BY97}.

\begin{remark} In the special case $S=\emptyset$, the set (\ref{Xpartialorb}) is just the full orbit $G.x$  for all $\epsilon>0$ and the theorem studies exactly the same situation as in Theorem \ref{Berend} except that assumption (3) is removed. In this case $V_{\langle S\rangle}$ is exactly the isometric subspaces $V_{\mathrm{Isom}}$ defined in (\ref{Berendcentral}). Moreover the subspace $\tilde\psi^{-1}(V_{\langle S\rangle})=\tilde\psi^{-1}(V_{\mathrm{Isom}})$ can be defined as $\{v\in\bR^d:\exists C>0, C^{-1}|v|\leq |\alpha^\bfn.v|\leq C|v|\}$.\end{remark}

Instead of Theorem \ref{denseorcentral}, we will prove the following slightly stronger statement:

\begin{theorem}\label{denseorcentral'}Suppose $S\subset I$ satisfies both conditions (1) and (2) in Theorem \ref{denseorcentral}. Then for any point $x\in X$ we have the following dichotomy:\begin{enumerate}
\item[(i)] If $x$ is not a $V_{\langle S\rangle}$-translated torsion point, then $\bigcap_{\epsilon>0}\overline{G_{\epsilon,S}.x}=X$.
\item[(ii)] Otherwise $\bigcap_{\epsilon>0}\overline{G_{\epsilon,S}.x}$ is a finite set of $V_{\langle S\rangle}$-translated torsion points.\end{enumerate}
\end{theorem}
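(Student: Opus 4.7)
Part (ii) I would handle by a direct computation. If $x=x_*+v$ with $x_*$ a torsion point and $v\in V_{\langle S\rangle}$, then $\zeta^\bfn.x=\zeta^\bfn.x_*+\zeta^\bfn.v$, and $\zeta^\bfn.x_*$ ranges over the finite orbit $G.x_*$. Each $\lambda_i$ with $i\in\langle S\rangle$ lies in $L_S$ and is thus a bounded $\bR$-linear combination of the $\lambda_j$, $j\in S$, so $|\lambda_i(\bfn)|=O_S(\epsilon)$ for every $i\in\langle S\rangle$ whenever $\bfn\in G_{\epsilon,S}$. Combined with $\|\beta_i(\bfn)\|<\epsilon$ for all $i\in I$, this forces $\zeta^\bfn$ to be $O_S(\epsilon)$-close to the identity on any bounded subset of $V_{\langle S\rangle}$, hence $\zeta^\bfn.v$ stays close to $v$. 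Consequently $G_{\epsilon,S}.x$ is $O_S(\epsilon)$-close to the finite set $\{y+v:y\in G.x_*\}$, and intersecting over $\epsilon>0$ yields (ii).

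For (i) I would argue the contrapositive: set $Y:=\bigcap_{\epsilon>0}\overline{G_{\epsilon,S}.x}$, assume $Y\ne X$, and deduce that $x$ is a $V_{\langle S\rangle}$-translated torsion point. The first step is to record the approximate $G_{\epsilon,S}$-invariance of $Y$: if $y=\lim_k\zeta^{\bfn_k}.x$ with $\bfn_k\in G_{\epsilon_k,S}$ and $\epsilon_k\to 0$, then for any $\bfm\in G_{\delta,S}$ one has $\zeta^\bfm.y\in\overline{G_{2\delta,S}.x}$, and a diagonal extraction shows that any limit $\lim_\ell \zeta^{\bfm_\ell}.y_\ell$ with $\bfm_\ell\in G_{\epsilon_\ell,S}$, $y_\ell\in Y$, $\epsilon_\ell\to 0$, still lies in $Y$. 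Applying this to pairs of orbit points and passing to differences should produce a closed translation-invariance subgroup $H\subset X$ leaving $Y$ invariant, whose $V_{\langle S\rangle}$-component must be small (because $\zeta^\bfn\approx\mathrm{id}$ there) while its component along the complementary directions $V_i$, $i\notin\langle S\rangle$, can be arranged to accumulate on a large set by exploiting hyperbolicity in those directions.

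The final step is to apply the classical Theorem \ref{Berend} on the quotient $X/H$. Assumption (1), $\dim L_S\le r-2$, ensures that the kernel of $(\lambda_i)_{i\in S}$ inside $\bZ^r$ has rank-$\ge 2$ image under the remaining Lyapunov functionals $\lambda_i$ with $i\notin\langle S\rangle$, so the induced action on $X/H$ has rank $\ge 2$; assumption (2) provides a totally irreducible element in every $G_{\epsilon,S}$ whose image in the quotient is still totally irreducible (by Remark \ref{alphazetairr}, since $\zeta^\bfn$ not being in a proper subfield of $K$ is preserved under the relevant projections). With all eigendirections on the quotient now genuinely hyperbolic, Theorem \ref{Berend} forces the image of $x$ in $X/H$ to be either dense (contradicting $Y\ne X$) or a rational point, the latter combined with the structure of $H$ forcing $x$ itself to be a $V_{\langle S\rangle}$-translated torsion point. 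The main obstacle is the middle step: converting the approximate $G_{\epsilon,S}$-invariance of $Y$ into genuine $H$-invariance for a subgroup $H$ large enough that the quotient satisfies Berend's hypotheses; making this precise requires a delicate diagonalization that uses simultaneously the near-isometry of $\zeta^\bfn$ on $V_{\langle S\rangle}$ and its hyperbolicity on $V_{I\setminus\langle S\rangle}$, and it is here that the assumption $\dim L_S\le r-2$ is consumed in an essential way.
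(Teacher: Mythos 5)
Your treatment of part~(ii) is correct and matches the paper's (Lemma~\ref{torsionorbit}): one just observes that $\zeta^\bfn$ acts approximately isometrically on $V_{\langle S\rangle}$ when $\bfn\in G_{\epsilon,S}$, so the partial orbit stays near the finite set $\{y+v:y\in G.x_*\}$.

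For part~(i), however, the quotient strategy cannot be made to work, for two independent reasons. First, to pass the action to a quotient $X/H$ you need $H$ to be a closed $\zeta$-invariant subgroup of $X$; but by irreducibility (this is exactly what Lemma~\ref{eigenirrational} encodes: no nonzero character vanishes on any $V_i$, hence $\overline{\pi(V_i)}=X$ for every $i$), the only proper closed $\zeta$-invariant subgroups of $X$ are finite. In particular there is no positive-dimensional $H$ you could quotient by to ``remove the non-hyperbolic directions''; a finite $H$ leaves $X/H$ with exactly the same Lyapunov data and Berend's condition~(3) still fails there. Second, even granting a suitable $H$, Theorem~\ref{Berend} classifies orbits of the \emph{full} $\bZ^r$-action, whereas you need density of the restricted set $G_{\epsilon,S}.x$, which is not a subgroup orbit; Berend's conclusion says nothing about it, and the constraint $|\lambda_i(\bfn)|<\epsilon$ for $i\in S$ remains in force regardless of which quotient of $X$ you look at. You flag the conversion to genuine $H$-invariance as the ``main obstacle'', and indeed it is, but it is not a gap that delicate diagonalization can fill — it is structurally blocked by irreducibility.

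The paper avoids the quotient altogether. It works with $(H,S)$-invariant sets directly on $X$: starting from a ``$V_{\langle S\rangle}$-pattern'' (accumulation transverse to $V_{\langle S\rangle}$), it concentrates the accumulation onto a single coarse Lyapunov subspace (Proposition~\ref{concentration}), stretches it into arbitrarily long line segments and then a full line using the hyperbolicity available inside $P_S$ (Proposition~\ref{line}, which is where $\dim L_S\leq r-2$ enters via Lemma~\ref{nongroupapprox} and the rank count following Lemma~\ref{jointspan}), and then shows a generic translate of that line is dense via Lemma~\ref{eigenirrational}, the Evertse--Schlickewei--Schmidt bound on unit equations, and Berend's Lemma~\ref{filling}. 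The reduction to a torsion point is done through minimal $(G,S)$-invariant sets and the $M+M'\neq X$ lemma (Lemma~\ref{minfull}), not via a quotient. If you want to salvage your idea, the closest rigorous substitute is the suspension construction in \S\ref{suspension}: there one genuinely passes to a group action (of $P_S\times\{0\}\subset\bR^r\times(\bR/2\pi\bZ)^{r_1+r_2}$) on the larger space $\cX^\circ$, rather than to a quotient of $X$, and that is the direction in which the ``rank $\geq 2$ after restricting to $P_S$'' intuition becomes precise.
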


The spirit of the argument is that though usually not a subgroup, the subset $G_{\epsilon,S}$ looks like an abelian group of rank $r-\dim{L_S}$. As this rank is at least $2$ by assumption, techniques from \cite{B83} still apply.

\subsection{Necessity of total irreducibility}

\begin{remark}\label{totnecc}Assumption (2) in the theorem is necessary. In fact one can construct a $\bZ^r$-action $\alpha$ by toral automorphisms together with a set $S\subset I$ that meet all the other requirements in Theorem \ref{denseorcentral}, and a number $\epsilon_0>0$ such that: \begin{itemize}
\item[(2')] $\exists\bfn$ such that $|\lambda_i(\bfn)|<\epsilon_0,\forall i\in S$ and $\alpha^\bfn$ is totally irreducible;
\item[($\star$)] $\exists x\in X$ such that $x$ is not a $V_{ \langle S\rangle}$-translated torsion point but the subset \begin{equation}\label{counterexset}\{ \zeta^\bfn.x: \bfn \in\bZ^r\text{ s.t. }|\lambda_i(\bfn)|<\epsilon_0,\forall i \in S \}\end{equation} is not dense in $X$ either.\end{itemize}\end{remark}

A counterexample can be constructed in the follows way. The idea is that if the irreducibility assumption is not strong enough, then when one gets rid of hyperbolicity, irreducibility may be lost as well.

Let \begin{equation}\theta=\sqrt{\frac{\sqrt 6+\sqrt 2}2-1}-1\end{equation} and $K$ be the octic number field $\bQ(\theta)$. It is easy to check that $\deg K=8$, and $K$ has two real embeddings $\sigma_1,\sigma_2$, as well as three pairs of conjugate imaginary embeddings $(\sigma_i,\sigma_{i+5})$, where $i=2,3,4$. Moreover, $\sigma_1(\theta)=\sqrt{\frac{\sqrt 6+\sqrt 2}2-1}-1$ and $\sigma_2(\theta)=-\sqrt{\frac{\sqrt 6+\sqrt 2}2-1}-1$.   $\theta$ is an algebraic unit as it solves the polynomial $\theta^8+8\theta^7+32\theta^6+80\theta^5+132\theta^4+144\theta^3+96\theta^2+32\theta+1$. It is not hard to check $\frac{\sigma_i(\theta)}{\sigma_j(\theta)}$ is not a root of unity for all $i\neq j$ and it follows that for any $k \in N$ and any proper subfield $L\subset K$, $\theta^k\notin L$.

$K$ contains a totally real quartic subfield $F=\bQ(\sqrt 2,\sqrt 3)$. Remark $\sigma_1|_F=\sigma_2|_F$. By Dirichlet's Unit Theorem, $\rank(U_K)=4$ and $\rank(U_F)=3$. $\sqrt 3-\sqrt 2$, $\sqrt 2-1$ and $\frac{\sqrt6+\sqrt2}2$ form a set of fundamental units of $F$; furthermore, together with $\theta$, they give a system of fundamental units of $K$ as $\theta$ is not a root of unity. Define a group embedding $\zeta:\bZ^4\mapsto U_K$ by setting \begin{equation}\zeta^{\bfe_1}=\theta,\ \zeta^{\bfe_2}=\sqrt 3-\sqrt 2,\ \zeta^{\bfe_3}=\sqrt 2-1,\ \zeta^{\bfe_4}=\frac{\sqrt6+\sqrt2}2;\end{equation} where $\bfe_1,\cdots,\bfe_4$ is a basis of $\bZ^4$. As explained earlier, $\zeta$ defines naturally a multiplicative $\bZ^4$-action on $X=(\bR^2\oplus\bC^3)/\sigma(\cO_K)$ where $\cO_K$ is the ring of integers in $K$ and $\sigma$ is the canonical embedding of $K$ into $\bR^2\oplus\bC^3$ given by (\ref{fieldemb}). By choosing an arbitrary $\bZ$-basis of $\cO_K$, we can easily identify $X$ with $\bT^8$ and get a conjugate $\bZ^r$-action $\alpha$ by toral automorphisms as in Proposition \ref{Gfield}.

Let $S=\{1,2\}$One may explicitly compute the Lyapunov exponent $\lambda_i(\bfn)$ for each $i$ from $I=\{1,\cdots,5\}$ and verify that $\lambda_1$ and $\lambda_2$ are not proportional, and none of $\lambda_3$, $\lambda_4$, $\lambda_5$ is inside the linear span $L_S$ of $\lambda_1$, $\lambda_2$. Hence $\langle S\rangle=S$ and $\dim L_S=2$. So the condition (1) from Theorem \ref{denseorcentral} is satisfied as $r=4$ in this case.

Note $|\sigma_1(\theta)|\neq |\sigma_2(\theta)|$, hence \begin{equation}\lambda_1(\bfe_1)-\lambda_2(\bfe_1)=\log|\sigma_1(\theta)|-\log|\sigma_2(\theta)|\neq 0.\end{equation} Take $\epsilon_0>\max\big(\log|\sigma_1(\theta)|,\log|\sigma_2(\theta)|\big)$. Then $\bfe_1$ satisfies $|\lambda_i(\bfe_1)|<\epsilon_0$ for both $i=1,2$. By Remark \ref{alphazetairr}, the fact that $(\zeta^{\bfe_1})^k=\theta^k$ does not belong to any proper subfield of $K$ implies $\alpha^{\bfe_1}$ is a totally irreducible automorphism on $\bT^8$. Therefore we verified the condition (2') in Remark \ref{totnecc}.

Last, we are going to establish the claim ($\star$). Consider all $\bfn\in\bZ^4$ such that \begin{equation}\label{lyaboundcounter}|\lambda_i(\bfn)|<\epsilon_0,i=1,2,\end{equation} write $\bfn=a\bfe_1+\bfn'$ where $a\in\bZ$ and $\bfn'\in\bZ\bfe_2\oplus\bZ\bfe_3\oplus\bZ\bfe_4$. It follows that $\zeta^{\bfn'}\in U_F$. Since $\sigma_1$ and $\sigma_2$ coincide on $F$, we see  $\lambda_1(\bfn')=\lambda_2(\bfn')$. Thus $\lambda_1(\bfn)-\lambda_2(\bfn)=\lambda_1(a\bfe_1)-\lambda_2(a\bfe_1)=a\big(\lambda_1(\bfe_1)-\lambda_2(\bfe_1)\big)$ and it follows from (\ref{lyaboundcounter}) that $|a|$ is bounded by $\left|\frac{2\epsilon_0}{\lambda_1(\bfe_1)-\lambda_2(\bfe_1)}\right|$. Hence $a$ can take only finitely many values.

Remark that under the natural projection $\pi$ from $\bR^2\oplus\bC^3$ to $X$, the $4$-dimensional real subspace $\overline{\sigma(F)}\subset \bR^2\oplus\bC^3$ has a closed image $\pi\big(\overline{\sigma(F)}\big)=\overline{\sigma(F)}/\sigma(\cO_F)$, which is a $4$-dimensional subtorus in the $8$-dimensional twisted torus $X$, which we denote by $Y$. As $U_F$ preserves $F$ under multiplication, the restriction of the action by $U_K$ on $X$ to $U_F$ stabilizes $Y$. In particular $\zeta^{\bfn'}.Y=Y$. Hence $\zeta^\bfn.Y=\zeta^{a\bfe_1}\zeta^{\bfn'}.Y=\zeta^{a\bfe_1}.Y$. Note for each fixed $a$, $\zeta^{a\bfe_1}.Y$ is a $4$-dimensional subtorus of $X$. Because under the assumption (\ref{lyaboundcounter}), there are only finitely many choices of $a$, for any $x\in Y$ the set (\ref{counterexset}) is contained in a finite union of $4$-dimensional subtori and hence cannot be dense in $X$. But it is not hard to show that since $V_{\langle S\rangle}=V_S$ is a proper $2$-dimensional subspace of $\bR^2\oplus\bC^3$ in this case, not every $x\in Y$ can be represented as a $V_{\langle S\rangle}$-translated torsion point. This produces a counterexample as described in Remark \ref{totnecc}.

\section{$(H,S)$-invariant sets}

In order to show Theorem \ref{denseorcentral'} it is necessary to have some extra formulations, mainly to overcome the obstacle that a typical $G_{\epsilon,S}$ doesn't form a group.

\begin{definition}For $S\subset I$ and a subgroup $H \leq G$, a closed subset $A$ of $X$ is said to be $(H,S)$-invariant if for all $x\in A$, $\bigcap_{\epsilon>0}\overline{H_{\epsilon,S}.x}\subset A$.\end{definition}

\subsection{Relation to Katok and Spatzier's suspension construction}\label{suspension} The $(H,S)$-invariant sets in this paper are introduced in the spirit of subdynamics along foliations defined by Boyle-Lind \cite{BY97}. They are also inspired by, and closely related to, the suspension construction of Katok and Spatzier.

Given a $\bZ^r$-action $\rho$ on a space $N$, sometimes it is desirable to pass to an $\bR^r$-action. For this purpose, in \cite{KS96} Katok and Spatzier introduced a suspension space $\cN=(\bR^r\times N)/\bZ^r$, where the quotient is defined by the following $\bZ^r$-action on $\bR^r\times N$: \begin{equation}\bfn.(\eta,x)=(\eta-\bfn,\rho(\bfn).x),\ \forall\bfn\in\bZ^r,\forall (\eta,x)\in \bR^r\times N.\end{equation} For all $\eta\in\bR^r$ and $x\in N$, denote by $\overline{(\eta,x)}$ the equivalence class from $\cN$ that contains $(\eta,x)$.

Note that the additive action $\bR^r\curvearrowright\bR^r\times N$ given by $\eta'.(\eta,x)=(\eta+\eta',x)$ commute with the quotient structure and induces an $\bR^r$-action on $\cN$, which is denoted by $\tilde\rho$.

The space $\cN$ has a natural fiber structure over $\bT^r=\bR^r/\bZ^r$, where the fiber above the equivalent class $\bar\eta=\eta+\bZ^r$ is $\big\{\overline{(\eta,x)}:x\in N\big\}$ and is homeomorphic to $N$. For $\eta\in\bR^r$ and $x\in X$, suppose $\cO$ is the orbit of $(0,x)$ under the $\bR^r$-action $\tilde\rho$, then its intersection with the fiber above $\bar\eta$ writes $\{\overline{(\eta,y)}:\exists\bfn\in\bZ^r,y=\rho(\bfn).x\}$. In particular for any $\tilde\rho$-orbit $\cO\subset\cN$, the intersection of $\cO$ with any fiber, which we identify with $N$, is an orbit of the $\bZ^r$-action $\rho$. Moreover, since $\cO$ is $\tilde\rho$-invariant, its intersections with different fibers are homeomorphic to each other. Hence by classifying $\tilde\rho$-orbits in $\cN$, one also classifies $\rho$-orbits in $N$.

In our case, let $N=X$ and $\rho$ be the multiplicative $\bZ^r$-action $\zeta$, and construct the suspension system $(\cX,\tilde\zeta)$ in the above way. It follows from Berend's Theorem \ref{Berend} and previous discussion that the $\tilde\zeta$-orbit of any point $\overline{(\eta,x)}\in\cX$ in $\cX$ unless $x$ is not a torsion point in $X$.

\begin{definition}\label{planenonhyp}For any subset $S\subset I$, define a hyperplane \begin{equation}P_S=\{\eta\in\bR^r:\lambda_i(\eta)=0,\forall i\in S\}.\end{equation}\end{definition}
Notice $P_S=L_S^\bot$ and $L_S=P_S^\bot$ where $L_S\subset(\bR^r)^*$ is defined in the statement of Theorem \ref{denseorcentral}. By assumption (1) in Theorem \ref{denseorcentral}, \begin{equation}\dim P_S=r-\dim L_S\geq 2.\end{equation}

In Theorem \ref{denseorcentral} we study the partial orbit of a point $x\in X$ under the action of all elements from $\bZ^r$ that are very close to the hyperplane $P_S\subset\bR^r$. This is linked to the partial orbit $\cO_{\tilde\zeta}\big(P_S, \overline{(0,x)}\big)$ of $\overline{(0,x)}\in\cX$ under the restriction of the action $\tilde\zeta$ to $P_S\subset\bR^r$. Actually, it is possible to show that Theorem \ref{denseorcentral} is equivalent the claim that $\cO_{\tilde\zeta}\big(P_S, \overline{(0,x)}\big)$ is dense in $\cX$ unless $x$ is a $V_{\langle S\rangle}$-translated torsion point.\newline

Theorem \ref{denseorcentral'} is more precise than Theorem \ref{denseorcentral} and corresponds to a slightly more complicated kind of suspension systems that takes complex conjugates into account.

Let $\bZ^r$ act on the space $\bR^r\times(\bR/2\pi\bZ)^{r_1+r_2}\times X$ by
\begin{equation}\label{suspensionprod}\bfn.\big(\eta,\omega,x\big)=\Big(\eta-\bfn,\omega-\big(\beta_j(\bfn)\big)_{j\in I},\zeta^\bfn.x\Big),\end{equation} which induces a compact quotient $(\bR^r\times(\bR/2\pi\bZ)^{r_1+r_2}\times X)/\bZ^r$ that we denote by $\cX^\circ$. For $(\eta,\omega,x)\in \bR^r\times(\bR/2\pi\bZ)^{r_1+r_2}\times X$, denote by $\overline{(\eta,\omega,x)}$ its projection in $\cX^\circ$.

Remark that
\begin{equation}\label{suslattice}\Omega=\Big\{\Big(\bfn,\big(\tilde\beta_j(\bfn)\big)_{j\in I}\Big):\bfn\in\bZ^r\Big\}\end{equation} is a discrete cocompact subgroup in  $\bR^r\times(\bR/2\pi\bZ)^{r_1+r_2}$. Note the quotient $(\bR^r\times(\bR/2\pi\bZ)^{r_1+r_2})/\Omega$ is isomorphic to $\bT^{r+r_1+r_2}$. For $(\eta,\omega)\in \bR^r\times(\bR/2\pi\bZ)^{r_1+r_2}$, let $\overline{(\eta,\omega)}$ be its projection in $(\bR^r\times(\bR/2\pi\bZ)^{r_1+r_2})/\Omega$.

Then $\cX^\circ$ fibers naturally over $(\bR^r\times(\bR/2\pi\bZ)^{r_1+r_2})/\Omega$ and each fiber being a copy of $X$.

The group $\bR^r\times(\bR/2\pi\bZ)^{r_1+r_2}$ acts naturally on $\bR^r\times(\bR/2\pi\bZ)^{r_1+r_2}\times X$ by translation on the first two factors. Remark this action commutes with the $\bZ^r$-action \ref{suspensionprod}, hence passes to a $\bR^r\times(\bR/2\pi\bZ)^{r_1+r_2}$-action on the quotient space $\cX^\circ$, which we still denote by $\tilde\zeta$ without causing ambiguity.

The group $H_{\epsilon,S}$ consists of elements $\bfn\in H$ such that $\big(\bfn,(\beta_j(\bfn))_{j\in I}\big)$ is sufficiently close to the subgroup $P_S\times\{0\}$ of $\bR^r\times(\bR/2\pi\bZ)^{r_1+r_2}$. Then the set $\bigcap_{\epsilon}\overline{H_{\epsilon,S}.x}$ is closely related to the restriction of the action $\tilde\zeta$ to $P_S\times\{0\}$. In fact, if $\cO_{\tilde \zeta}\big(P_S\times\{0\},\overline{(0,0,x)}\big)$ denotes the orbit of $\overline{(0,0,x)}\in\cX^\circ$ under the restriction of $\tilde\zeta$ to the subgroup $P_S\times\{0\}$, then one can show that the intersection between its closure and the fiber above $\overline{(\eta,\omega)}$ is exactly \begin{equation}\big\{\overline{(\eta,\omega,y)}:y\in \bigcap_{\epsilon>0}\overline{G_{\epsilon,S}.x}\big\}.\end{equation} Thus another way to formulate Theorem \ref{denseorcentral'} is that:\newline

{\it $\cO_{\tilde \zeta}\big(P_S\times\{0\},\overline{(0,0,x)}\big)$ is dense in $\cX^\circ$ unless $x$ is a $V_{\langle S\rangle}$-translated torsion point.}\newline

It should be emphasized that though the results in this paper are proved using $(H,S)$-invariant sets, they can also obtained by studying the suspension systems described above.

\subsection{Basic properties of invariant sets}\label{basicinv}

Despite the fact that the $(H,S)$-invariant sets are defined using the $H_{\epsilon,S}$'s which are not groups in general, to some extent they have similar properties to invariant sets under group actions.

\begin{lemma}\label{cosetcompo}(i) Suppose $H \leq G$ and two cosets $\sigma+H$, $\tau+H$ are both compatible with $S$, then so is $\sigma+\tau+H$;

(ii) $\forall x\in X$, $\forall\epsilon>0$, $H^\tau_{\delta,S}.\overline{H^\sigma_{\epsilon,S}.x}\subset \overline{H^{\sigma+\tau}_{\epsilon+\delta,S}.x}$.
\end{lemma}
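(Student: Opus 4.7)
The plan is to deduce both parts from the fact that the Lyapunov functionals $\lambda_i$ and the argument maps $\beta_j$ are group homomorphisms (by Remark 1.7), so that the defining conditions of $H^\sigma_{\epsilon,S}$ behave subadditively under addition of exponents. Concretely, for any $\mathbf{n}_1, \mathbf{n}_2 \in \mathbb{Z}^r$ one has
\begin{equation*}
|\lambda_i(\mathbf{n}_1+\mathbf{n}_2)| \leq |\lambda_i(\mathbf{n}_1)| + |\lambda_i(\mathbf{n}_2)|, \qquad \|\beta_j(\mathbf{n}_1+\mathbf{n}_2)\| \leq \|\beta_j(\mathbf{n}_1)\| + \|\beta_j(\mathbf{n}_2)\|,
\end{equation*}
the second being the triangle inequality for the quotient metric on $\mathbb{R}/2\pi\mathbb{Z}$.

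For part (i), given $\epsilon>0$, I would pick any $\mathbf{n}_1 \in H^\sigma_{\epsilon/2,S}$ and $\mathbf{n}_2 \in H^\tau_{\epsilon/2,S}$, both of which are nonempty by compatibility. Their sum lies in $\sigma+\tau+H$, and the subadditive estimates above immediately place it in $H^{\sigma+\tau}_{\epsilon,S}$. Since $\epsilon>0$ was arbitrary, $\sigma+\tau+H$ is compatible with $S$.

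For part (ii), fix $\mathbf{n}\in H^\tau_{\delta,S}$ and a point $y \in \overline{H^\sigma_{\epsilon,S}.x}$. I would choose a sequence $\mathbf{m}_k \in H^\sigma_{\epsilon,S}$ with $\zeta^{\mathbf{m}_k}.x \to y$. By continuity of the $\zeta$-action on $X$, $\zeta^{\mathbf{n}+\mathbf{m}_k}.x \to \zeta^\mathbf{n}.y$. Applying the subadditivity estimates to $\mathbf{n}+\mathbf{m}_k \in \sigma+\tau+H$ yields $\mathbf{n}+\mathbf{m}_k \in H^{\sigma+\tau}_{\epsilon+\delta,S}$, so each $\zeta^{\mathbf{n}+\mathbf{m}_k}.x$ lies in $H^{\sigma+\tau}_{\epsilon+\delta,S}.x$, and passing to the limit gives $\zeta^\mathbf{n}.y \in \overline{H^{\sigma+\tau}_{\epsilon+\delta,S}.x}$, as required.

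There is no serious obstacle here: the lemma is essentially a bookkeeping statement reflecting that although $H^\sigma_{\epsilon,S}$ is not a group, the two conditions cutting it out of $\sigma+H$ are metric conditions on homomorphisms, hence behave additively up to a triangle inequality. The only mild care needed is to verify that the strict inequalities survive the sum (they do, since $\epsilon/2+\epsilon/2=\epsilon$ and $\epsilon+\delta$ are the exact bounds claimed).
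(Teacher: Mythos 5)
Your proof is correct and takes essentially the same approach as the paper: both parts follow from the subadditivity of $|\lambda_i(\cdot)|$ and $\|\beta_j(\cdot)\|$ under the group homomorphisms, and in part (ii) both arguments reduce to observing that $\zeta^{\bfn}$ is continuous and maps $H^\sigma_{\epsilon,S}.x$ into $H^{\sigma+\tau}_{\epsilon+\delta,S}.x$. The only cosmetic difference is that you split the budget as $\epsilon/2 + \epsilon/2$ in part (i) where the paper uses $\epsilon + \delta$ with arbitrary $\epsilon,\delta>0$.
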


\begin{proof}(i) It suffices to show for any $\epsilon,\delta>0$, $H^{\sigma+\tau}_{\epsilon+\delta,S}$ is non-empty. By assumption both $H^\sigma_{\epsilon,S}$ and $H^\tau_{\delta,S}$ are non-empty, from which we respectively take elements $\bfm$ and $\bfn$. Then $\bfm+\bfn\in\sigma+\tau+H$ as $\bfm\in\sigma+H$, $\bfn\in\tau+H$. Furthermore by Definition \ref{groupslice},
\begin{equation}|\lambda_i(\bfm+\bfn)|\leq |\lambda_i(\bfm)|+|\lambda_i(\bfn)|<\epsilon+\delta, \forall i\in S;\end{equation} and \begin{equation}|\beta_j(\bfm+\bfn)|\leq |\beta_j(\bfm)|+|\beta_j(\bfn)|<\epsilon+\delta, \forall j\in I.\end{equation}
Hence $\bfm+\bfn\in H^{\sigma+\tau}_{\epsilon+\delta,S}$.

(ii) It is enough to prove for all $\bfn\in H^\tau_{\delta,S}$ that $\zeta^\bfn. \overline{H^\sigma_{\epsilon,S}.x}\subset \overline{H^{\sigma+\tau}_{\epsilon+\delta,S}.x}$. Since $\zeta^\bfn$ is a continuous map, it suffices to show $\zeta^\bfn. (H^\sigma_{\epsilon,S}.x)\subset H^{\sigma+\tau}_{\epsilon+\delta,S}.x$. However by the proof of part (i), for any $\bfm\in H^\sigma_{\epsilon,S}$, $\bfm+\bfn\in H^{\sigma+\tau}_{\epsilon+\delta,S}$; which completes the proof.
\end{proof}

\begin{corollary}\label{cosetsubgp}Suppose $H \leq \tilde H \leq G$ and $H$ is of finite index in $\tilde H$, then the cosets $\{\sigma+H:\sigma\in\tilde H\text{ s.t. }\sigma+H\text{ is compatible with } S\}$ form a subgroup of $\tilde H/H$.\end{corollary}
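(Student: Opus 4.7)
The plan is to exploit the finiteness of $\tilde H/H$: in a finite group, any non-empty subset closed under the group operation is automatically a subgroup, since every element has finite order and hence its inverse appears as a positive power of itself. So it suffices to verify that the collection
\[\Sigma=\{\sigma+H:\sigma\in\tilde H,\ \sigma+H\text{ is compatible with }S\}\]
is non-empty and closed under the addition inherited from $\tilde H/H$.

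First I would note that $\Sigma$ is non-empty: the coset $0+H=H$ is compatible, because $0\in H$ satisfies $|\lambda_i(0)|=0<\epsilon$ and $\|\beta_j(0)\|=0<\epsilon$ for every $\epsilon>0$, so $0\in H_{\epsilon,S}$ for all $\epsilon>0$. Thus the identity of $\tilde H/H$ belongs to $\Sigma$.

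Next I would verify closure under addition. If $\sigma+H$ and $\tau+H$ both lie in $\Sigma$, then part (i) of Lemma \ref{cosetcompo} asserts that $\sigma+\tau+H$ is also compatible with $S$, hence $(\sigma+H)+(\tau+H)=(\sigma+\tau)+H\in\Sigma$. So $\Sigma$ is a submonoid of $\tilde H/H$.

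Finally, since $H$ has finite index in $\tilde H$, the quotient $\tilde H/H$ is a finite abelian group. For any $\sigma+H\in\Sigma$, iterated application of the previous paragraph gives $k(\sigma+H)\in\Sigma$ for every positive integer $k$; taking $k$ to be the order of $\sigma+H$ in $\tilde H/H$ yields $0+H$, and taking $k$ to be one less than this order yields the inverse $-\sigma+H$, which therefore also lies in $\Sigma$. Hence $\Sigma$ is a subgroup of $\tilde H/H$.

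The only mildly non-routine step is the first one, i.e.~the appeal to Lemma \ref{cosetcompo}(i) for closure under addition; everything else is a formal consequence of the finiteness of the index. Note that the argument crucially uses finiteness of $\tilde H/H$: without it, one would need an independent proof that the inverse of a compatible coset is compatible, which is not obvious from the $\epsilon$-slice definition.
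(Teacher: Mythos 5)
Your proof is correct and follows the same approach as the paper: closure under addition comes from Lemma \ref{cosetcompo}(i), the identity coset $H$ is trivially compatible, and finiteness of $\tilde H/H$ upgrades the submonoid to a subgroup. The paper states this in one line; you have simply spelled out the finite-group argument in more detail.
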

\begin{proof}By Lemma, the family of such cosets is stable under addition and contains the trivial element in the finite additive group $\tilde H/H$, hence is a subgroup.\end{proof}

\begin{corollary}\label{cosetcomposlice}(i) Suppose two coset $\sigma+H$, $\tau+H$ are both compatible with $ S$, then for all $y\in\bigcap_{\epsilon>0}\overline{H^\sigma_{\epsilon,S}.x}$,  $\bigcap_{\epsilon>0}\overline{H^\tau_{\epsilon,S}.y}\subset \bigcap_{\epsilon>0}\overline{H^{\sigma+\tau}_{\epsilon,S}.x}$;

(ii) If $\sigma+H$ is compatible with $S$, then $\forall x\in X$, the closed set $\bigcap_{\epsilon>0}\overline{H^\sigma_{\epsilon,S}.x}$ is non-empty and $(H,S)$-invariant.\end{corollary}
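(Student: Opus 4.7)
The plan is to deduce both parts directly from Lemma~\ref{cosetcompo}, with compactness of $X$ as the only additional ingredient. Part~(i) is essentially an approximate associativity statement for the slice structure; once (i) is in place, the $(H,S)$-invariance half of (ii) falls out simply by taking $\tau = 0$, and non-emptiness in (ii) will be a standard compactness argument.

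For (i), I would fix an arbitrary $z \in \bigcap_{\epsilon>0}\overline{H^\tau_{\epsilon,S}.y}$ and an arbitrary $\epsilon > 0$, then split $\epsilon$ as $\epsilon' + \delta$ for some $\epsilon',\delta > 0$ (for concreteness, $\epsilon' = \delta = \epsilon/2$). Since $y \in \overline{H^\sigma_{\epsilon',S}.x}$ by hypothesis, Lemma~\ref{cosetcompo}(ii) yields
\begin{equation*}
H^\tau_{\delta,S}.y \,\subset\, H^\tau_{\delta,S}.\overline{H^\sigma_{\epsilon',S}.x} \,\subset\, \overline{H^{\sigma+\tau}_{\epsilon'+\delta,S}.x} \,=\, \overline{H^{\sigma+\tau}_{\epsilon,S}.x}.
\end{equation*}
Taking closures gives $\overline{H^\tau_{\delta,S}.y} \subset \overline{H^{\sigma+\tau}_{\epsilon,S}.x}$, and this set contains $z$. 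Since $\epsilon > 0$ was arbitrary, $z$ lies in $\bigcap_{\epsilon>0}\overline{H^{\sigma+\tau}_{\epsilon,S}.x}$, which is what was claimed.

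For part~(ii), I would first observe that $H^\sigma_{\epsilon',S} \subset H^\sigma_{\epsilon,S}$ whenever $0 < \epsilon' \leq \epsilon$, so $\bigl\{\overline{H^\sigma_{\epsilon,S}.x}\bigr\}_{\epsilon>0}$ is a nested family of closed subsets of the compact space $X$. Compatibility of $\sigma + H$ with $S$ makes every member of the family non-empty, so the finite intersection property forces $\bigcap_{\epsilon>0}\overline{H^\sigma_{\epsilon,S}.x} \neq \emptyset$. For $(H,S)$-invariance, I would apply part~(i) with $\tau = 0$: the trivial coset $0 + H = H$ is always compatible with $S$ (as pointed out in the paper), and (i) specializes to
\begin{equation*}
\bigcap_{\epsilon>0}\overline{H_{\epsilon,S}.y} \,\subset\, \bigcap_{\epsilon>0}\overline{H^\sigma_{\epsilon,S}.x}
\end{equation*}
for every $y$ in the latter set, which is precisely the defining condition of $(H,S)$-invariance.

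I do not anticipate a serious obstacle here. The only item requiring attention is the bookkeeping of the slice parameters $\epsilon',\delta$ so that their sum stays below $\epsilon$, but this is exactly what Lemma~\ref{cosetcompo} was crafted to handle; the rest is a one-step compactness argument and a specialization of (i).
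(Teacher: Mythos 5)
Your proof is correct and follows the same route as the paper's: both parts rest on Lemma~\ref{cosetcompo}(ii) for (i) and on the nested-compact-sets argument for non-emptiness in (ii), with invariance obtained by specializing (i) to $\tau=0$. The only cosmetic difference is that you split $\epsilon$ into $\epsilon/2+\epsilon/2$ and argue elementwise, while the paper writes the chain of inclusions at a uniform parameter $\epsilon$ and then observes $\bigcap_{\epsilon>0}\overline{H^{\sigma+\tau}_{2\epsilon,S}.x}=\bigcap_{\epsilon>0}\overline{H^{\sigma+\tau}_{\epsilon,S}.x}$; these are the same bookkeeping.
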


In particular, $\bigcap_{\epsilon>0}\overline{H_{\epsilon,S}.x}$ always contains $x$ and is $(H,S)$-invariant, which is analogous to the fact that orbit closures are invariant in the setting of group actions.

\begin{proof}(i) By Lemma \ref{cosetcompo}.(ii),
\begin{equation}\bigcap_{\epsilon>0}\overline{H^\tau_{\epsilon,S}.y}\subset\bigcap_{\epsilon>0}\overline{H^\tau_{\epsilon,S}.\overline{H^\sigma_{\epsilon,S}.x}}\subset\bigcap_{\epsilon>0}\overline{H^{\sigma+\tau}_{2\epsilon,S}.x}=\bigcap_{\epsilon>0}\overline{H^{\sigma+\tau}_{\epsilon,S}.x}.\end{equation}

(ii) Invariance follows from part (i) by taking $\tau=0$. Since $\sigma+H$ is compatible with $S$, $\overline{H^\sigma_{\epsilon,S}.x}$ is non-empty for all $\epsilon$, by compactness of $X$, the limit $\bigcap_{\epsilon>0}\overline{H^\sigma_{\epsilon,S}.x}$ is non-empty.
\end{proof}

\begin{remark}\label{describeinv} Clearly $\bigcap_{\epsilon>0}\overline{H_{\epsilon,S}.x}$ is the smallest $(H,S)$-invariant closed set containing $x$. Moreover it is not hard to see that $x'\subset\bigcap_{\epsilon>0}\overline{H^\sigma_{\epsilon,S}.x}$ if and only if there is a sequence $\{\bfn_k\}_{k=1}^\infty$ such that $\bfn_k\in H^\sigma_{\epsilon_k,S}$ where $\epsilon_k\rightarrow 0$, such that $\lim_{k\rightarrow\infty}\zeta^{\bfn_k}.x=x'$.\end{remark}

The next property is that the family of invariant sets is stable under addition and subtraction.

\begin{lemma}\label{invpm}Suppose two closed sets $A$ and $B$ are both $(H,S)$-invariant, then so are $A+B=\{x+y:x\in A,y\in B\}$ and $A-B=\{x-y:x\in A,y\in B\}$.\end{lemma}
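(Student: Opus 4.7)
The plan is to reduce everything to the characterization of invariant sets given in Remark \ref{describeinv} and exploit the fact that each $\zeta^\bfn$ is a continuous group automorphism of the compact abelian group $X$, hence distributes over addition and subtraction.

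First I would check that $A+B$ and $A-B$ are closed: since $X$ is compact and $A,B$ are closed, the image of $A\times B$ under the continuous map $(x,y)\mapsto x\pm y$ is closed. The main content is invariance. Fix $z=x\pm y$ with $x\in A$, $y\in B$, and take an arbitrary $z'\in\bigcap_{\epsilon>0}\overline{H_{\epsilon,S}.z}$. By Remark \ref{describeinv} there exist $\epsilon_k\to 0$ and $\bfn_k\in H_{\epsilon_k,S}$ with $\zeta^{\bfn_k}.z\to z'$.

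Because $\zeta^{\bfn_k}$ is a group automorphism of $X$, we have $\zeta^{\bfn_k}.z=\zeta^{\bfn_k}.x\pm\zeta^{\bfn_k}.y$. By compactness of $X$, after passing to a subsequence we may assume $\zeta^{\bfn_k}.x\to x'$ and $\zeta^{\bfn_k}.y\to y'$ for some $x',y'\in X$, and then $z'=x'\pm y'$. The sequence $\bfn_k$ witnesses, via Remark \ref{describeinv}, that $x'\in\bigcap_{\epsilon>0}\overline{H_{\epsilon,S}.x}\subset A$ by $(H,S)$-invariance of $A$, and similarly $y'\in B$. Hence $z'\in A\pm B$, proving the inclusion $\bigcap_{\epsilon>0}\overline{H_{\epsilon,S}.z}\subset A\pm B$.

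There is essentially no obstacle: the argument only uses that the action is by continuous automorphisms of a compact group, together with sequential compactness. The one place to be slightly careful is the subsequence extraction, which is needed to make sense of the limits of $\zeta^{\bfn_k}.x$ and $\zeta^{\bfn_k}.y$ separately; without it one only controls the sum (or difference) directly. Nothing about the structure of $H_{\epsilon,S}$ beyond the fact that it shrinks as $\epsilon\to 0$ enters the argument, so the lemma holds for the same reason any orbit-closure statement for a commutative group of automorphisms is stable under $\pm$.
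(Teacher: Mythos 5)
Your proof is correct and matches the paper's argument essentially line for line: take a sequence $\bfn_k\in H_{\epsilon_k,S}$ witnessing $z'$, split $\zeta^{\bfn_k}.z$ into $\zeta^{\bfn_k}.x\pm\zeta^{\bfn_k}.y$, pass to a subsequence to obtain limits $x',y'$, and invoke $(H,S)$-invariance of $A$ and $B$ via Remark \ref{describeinv}. The only addition is your explicit (and correct) remark that $A\pm B$ is closed by compactness of $X$, which the paper leaves implicit.
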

\begin{proof} Suppose $z=x+y\in A+B$ where $x\in A$, $y\in B$. Take $z'\in\bigcap_{\epsilon>0}\overline{H_{\epsilon,S}.z}$, then by Remark \ref{describeinv}, there is a sequence $\epsilon_k\rightarrow 0$ and elements $\bfn_k\in H_{\epsilon_k,S}$ such that $\lim_{k\rightarrow\infty}\zeta^{\bfn_k}.z=z'$. As $X$ is compact, by passing to a subsequence we may assume $\lim_{k\rightarrow\infty}\zeta^{\bfn_k}.x=x'$ and $\lim_{k\rightarrow\infty}\zeta^{\bfn_k}.y=y'$, which belong respectively to $\bigcap_{\epsilon>0}\overline{H_{\epsilon,S}.x}$ and $\bigcap_{\epsilon>0}\overline{H_{\epsilon,S}.y}$. Hence $x'\in A$, $y'\in B$ and $z'=x'+y'\in A+B$. So $A+B$ is invariant. The proof for $A-B$ goes the same.\end{proof}

The next lemma allows us to talk about ``finitely generated'' invariant sets.

\begin{lemma}\label{fginvariant}For a finite set of points $x_1,\cdots,x_N\in X$ the  closed set \begin{equation}\label{fginvariant00}\bigcap_{\epsilon>0}\overline{H_{\epsilon,S}.x_1}+\cdots+\bigcap_{\epsilon>0}\overline{H_{\epsilon,S}.x_N}\end{equation} is $(H,S)$-invariant. Moreover,

(i) The set (\ref{fginvariant00}) is equal to $\bigcap_{\epsilon>0}(\overline{H_{\epsilon,S}.x_1}+\cdots+\overline{H_{\epsilon,S}.x_N})$;

(ii) $H_{\delta,S}.(\overline{H_{\epsilon,S}.x_1}+\cdots+\overline{H_{\epsilon,S}.x_N})\subset \overline{H_{\epsilon+\delta,S}.x_1}+\cdots+\overline{H_{\epsilon+\delta,S}.x_N}$ for all $\epsilon>0$.\end{lemma}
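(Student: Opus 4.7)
The plan is to prove part (ii) first (it is essentially a direct application of Lemma \ref{cosetcompo}), then deduce the $(H,S)$-invariance, and finally prove part (i) by a diagonal compactness extraction. No genuinely new idea is needed beyond what is already in \S\ref{basicinv}; the main point is to be careful about the order of quantifiers when pushing the intersection over $\epsilon$ through a finite sum.

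For part (ii), fix $\bfn\in H_{\delta,S}$ and $z=z_1+\cdots+z_N$ with $z_i\in\overline{H_{\epsilon,S}.x_i}$. Because $\zeta^\bfn$ is a continuous group automorphism of $X$, we have $\zeta^\bfn.z=\sum_i\zeta^\bfn.z_i$, and Lemma \ref{cosetcompo}.(ii) applied with $\sigma=\tau=0$ gives $\zeta^\bfn.\overline{H_{\epsilon,S}.x_i}\subset\overline{H_{\epsilon+\delta,S}.x_i}$ for each $i$. Summing these inclusions yields (ii). The $(H,S)$-invariance of the set (\ref{fginvariant00}) then follows without recourse to (i) or (ii): each $\bigcap_{\epsilon>0}\overline{H_{\epsilon,S}.x_i}$ is $(H,S)$-invariant by Corollary \ref{cosetcomposlice}.(ii) with $\sigma=0$, and Lemma \ref{invpm} shows that a pairwise sum of $(H,S)$-invariant sets is $(H,S)$-invariant; an induction on $N$ gives the result.

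For part (i), the inclusion $\subset$ is immediate, since any choice of $y_i\in\bigcap_{\epsilon>0}\overline{H_{\epsilon,S}.x_i}$ produces a sum $y_1+\cdots+y_N$ lying in $\overline{H_{\epsilon,S}.x_1}+\cdots+\overline{H_{\epsilon,S}.x_N}$ for every $\epsilon>0$. For $\supset$, take $z$ in the right-hand side and, for each integer $k\geq 1$, pick a decomposition $z=y_1^{(k)}+\cdots+y_N^{(k)}$ with $y_i^{(k)}\in\overline{H_{1/k,S}.x_i}$. The compactness of $X$ permits a diagonal subsequence extraction along which $y_i^{(k)}\to y_i$ for every $i$ simultaneously. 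For any fixed $\epsilon>0$, once $1/k<\epsilon$ the nested inclusion $H_{1/k,S}\subset H_{\epsilon,S}$ gives $y_i^{(k)}\in\overline{H_{\epsilon,S}.x_i}$, and since this set is closed the limit $y_i$ also lies in it. As this holds for every $\epsilon>0$, we obtain $y_i\in\bigcap_{\epsilon>0}\overline{H_{\epsilon,S}.x_i}$; passing to the limit in the finite-sum identity gives $z=y_1+\cdots+y_N$, as required.

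The whole lemma is essentially bookkeeping on top of Lemmas \ref{cosetcompo} and \ref{invpm}, and I do not expect a serious obstacle. The one point that deserves attention is the diagonal extraction in (i): one must take $\epsilon_k\to 0$ and extract limits of all $N$ sequences simultaneously before concluding that each limit lies in the intersection $\bigcap_{\epsilon>0}\overline{H_{\epsilon,S}.x_i}$. Everything else is formal.
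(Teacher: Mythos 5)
Your proof is correct and follows essentially the same route as the paper: the $(H,S)$-invariance from Corollary~\ref{cosetcomposlice}.(ii) plus Lemma~\ref{invpm} (the paper compresses this to ``follows from the previous lemma''), part (ii) as a direct application of Lemma~\ref{cosetcompo}.(ii), and part (i) by the compactness extraction that the paper summarizes as ``basically the same as the previous lemma'' -- your diagonal-subsequence argument is precisely that extraction, spelled out.
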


\begin{proof}  The invariance follows from the previous lemma.

It is clear that $(\ref{fginvariant00})\subset\bigcap_{\epsilon>0}(\overline{H_{\epsilon,S}.x_1}+\cdots+\overline{H_{\epsilon,S}.x_N})$; while the proof of the other direction of part (i) is basically the same as that of the previous lemma.

Part (ii) is an immediate corollary to Lemma \ref{cosetcompo}.(ii).\end{proof}

\subsection{Minimal invariant sets}

It is easy to see that $(H,S)$-invariant closed sets satisfy the descending chain condition: if $A_1\supset A_2\supset\cdots$ is a sequence of decreasing non-empty $(H,S)$-invariant closed sets, then the limit set $A=\bigcap_{n=1}^\infty A_n$ is also non-empty and $(H,S)$-invariant. Therefore it follows directly from Zorn's Lemma that it makes sense to talk about minimal invariant closed sets:

\begin{lemma}\label{minZorn}For a subgroup $H \leq G$ and $S\subset I$, any non-empty $(H,S)$-invariant closed set $A$ contains a {\bf minimal} $(H,S)$-invariant closed set $M$, i.e. $M$ is non-empty and $(H,S)$-invariant, and has no non-empty proper closed subset which is also $(H,S)$-invariant.\end{lemma}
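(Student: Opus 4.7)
The plan is to apply Zorn's Lemma to the poset of non-empty $(H,S)$-invariant closed subsets of $A$, ordered by reverse inclusion. The paragraph preceding the lemma already sketches the idea, so the proof amounts to verifying the hypotheses carefully.

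First I would let $\mathcal{F}$ denote the collection of all non-empty $(H,S)$-invariant closed subsets of $A$, partially ordered by $B \preceq C \Longleftrightarrow B \supset C$. Note $\mathcal{F}$ is non-empty since $A \in \mathcal{F}$. A maximal element of $(\mathcal{F},\preceq)$ is exactly a minimal $(H,S)$-invariant closed subset of $A$, so it suffices to check that every chain in $\mathcal{F}$ has an upper bound.

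So let $\{A_\iota\}_{\iota \in J}$ be a totally ordered (under $\preceq$) subfamily of $\mathcal{F}$, i.e.\ a chain of non-empty nested closed $(H,S)$-invariant subsets of $A$, and set $M = \bigcap_{\iota \in J} A_\iota$. The key points are:

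\textbf{(a) $M$ is non-empty.} Since $X$ is compact and each $A_\iota$ is a non-empty closed subset of $X$, the family $\{A_\iota\}$ has the finite intersection property (finite subcollections of a chain have a minimum element, which is non-empty). Hence by compactness $M \neq \emptyset$.

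\textbf{(b) $M$ is closed.} Immediate as an intersection of closed sets.

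\textbf{(c) $M$ is $(H,S)$-invariant.} Given $x \in M$, then $x \in A_\iota$ for every $\iota \in J$; by $(H,S)$-invariance of $A_\iota$, $\bigcap_{\epsilon>0}\overline{H_{\epsilon,S}.x} \subset A_\iota$ for each $\iota$, so $\bigcap_{\epsilon>0}\overline{H_{\epsilon,S}.x} \subset M$.

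Therefore $M \in \mathcal{F}$ and $M \succeq A_\iota$ for every $\iota$, giving the required upper bound. Zorn's Lemma then produces a maximal element of $(\mathcal{F},\preceq)$, which is the desired minimal $(H,S)$-invariant closed subset of $A$. There is essentially no obstacle here; the only thing worth flagging is that the verification of invariance of the intersection really does use the precise definition of $(H,S)$-invariance (in terms of $\bigcap_{\epsilon>0}\overline{H_{\epsilon,S}.x}$ rather than, say, closure under applying a single $\zeta^{\bfn}$), but as noted in step (c) this follows immediately from the definition.
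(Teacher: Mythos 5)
Your proof is correct and takes essentially the same approach as the paper, which simply invokes Zorn's Lemma after noting (in the paragraph preceding the lemma) that a decreasing chain of non-empty $(H,S)$-invariant closed sets has non-empty $(H,S)$-invariant closed intersection. You have merely spelled out the verification of the chain condition, and you have done so a bit more carefully than the paper by treating arbitrary chains rather than only countable decreasing sequences.
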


\begin{proof}Apply Zorn's Lemma.\end{proof}

For a minimal set $M$, any point $x\in M$ ``generates'' $M$.

\begin{lemma}\label{mingene} Let $M$ be a minimal $(H,S)$-invariant closed set, then for all $x\in M$, $\bigcap_{\epsilon>0}\overline{H_{\epsilon,S}.x}=M$.
\end{lemma}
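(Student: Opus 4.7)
The plan is to deduce this directly from the definition of $(H,S)$-invariance together with the non-emptiness and $(H,S)$-invariance of $\bigcap_{\epsilon>0}\overline{H_{\epsilon,S}.x}$ established in Corollary \ref{cosetcomposlice}.(ii), and then invoke the minimality of $M$ to force equality.

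First I would fix $x \in M$ and observe that since $M$ is $(H,S)$-invariant and $x \in M$, the very definition of $(H,S)$-invariance gives
\begin{equation*}
\bigcap_{\epsilon>0}\overline{H_{\epsilon,S}.x}\subset M.
\end{equation*}
Second, the trivial coset $0+H=H$ is automatically compatible with $S$ (the identity lies in $H_{\epsilon,S}$ for every $\epsilon>0$), so Corollary \ref{cosetcomposlice}.(ii) applied to $\sigma=0$ ensures that $\bigcap_{\epsilon>0}\overline{H_{\epsilon,S}.x}$ is a non-empty closed $(H,S)$-invariant subset of $X$.

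Finally, combining these two facts, $\bigcap_{\epsilon>0}\overline{H_{\epsilon,S}.x}$ is a non-empty closed $(H,S)$-invariant subset of $M$. By the minimality of $M$, the only such subset is $M$ itself, so equality follows. There is no real obstacle here: once Corollary \ref{cosetcomposlice}.(ii) is available, this lemma is just the standard "every point in a minimal invariant set generates the whole set" argument, transplanted to the $(H,S)$-invariant setting. The only subtlety worth flagging is that because $H_{\epsilon,S}$ is generally not a group, one cannot simply write down "$H.x$" as a candidate; the object $\bigcap_{\epsilon>0}\overline{H_{\epsilon,S}.x}$ plays the role of the orbit closure, and the two ingredients (containment in $M$, and being itself non-empty and invariant) are exactly what the definitions and Corollary \ref{cosetcomposlice}.(ii) deliver.
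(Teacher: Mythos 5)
Your proof is correct and follows essentially the same route as the paper: containment follows directly from the definition of $(H,S)$-invariance applied to $x\in M$, non-emptiness and invariance of $\bigcap_{\epsilon>0}\overline{H_{\epsilon,S}.x}$ come from Corollary \ref{cosetcomposlice}.(ii) (the remark following it), and minimality forces equality. No gaps.
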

\begin{proof}By definition of $(H,S)$-invariant closed sets, $\bigcap_{\epsilon>0}\overline{H_{\epsilon,S}.x}\subset M$. By the remark following Corollary \ref{cosetcomposlice}, $\bigcap_{\epsilon>0}\overline{H_{\epsilon,S}.x}$ is non-empty and $(H,S)$-invariant. So $M$ cannot be minimal unless $\bigcap_{\epsilon>0}\overline{H_{\epsilon,S}.x}=M$.\end{proof}

Let $H \leq \tilde H \leq G$ be two subgroups such that $|\tilde H/H|<\infty$. An $(\tilde H,S)$-invariant closed set is necessarily $(H,S)$-invariant; however it is not obvious whether a minimal $(\tilde H,S)$-invariant closed set should also be minimal in $(H,S)$-sense. The following result gives a relationship between these two classes of minimal invariant sets.

\begin{proposition}\label{minunion}Let $H \leq \tilde H \leq G$ with $|\tilde H/H|<\infty$ and $M\subset X$ be a minimal $(\tilde H,S)$-invariant closed set. Denote by $N$ the number of cosets from $\tilde H/H$ that are compatible with $S$. Then there exist $N$ minimal $(H,S)$-invariant closed subsets $M_1,\cdots,M_N\subset M$ whose union is $M$.\end{proposition}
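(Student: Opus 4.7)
The plan is to construct $N$ minimal $(H,S)$-invariant closed subsets of $M$ indexed by the $N$ compatible cosets. First, since $H_{\epsilon,S}\subset\tilde H_{\epsilon,S}$, $M$ is automatically $(H,S)$-invariant, so Lemma \ref{minZorn} yields some minimal $(H,S)$-invariant closed subset $M'\subset M$. Denote by $\mathcal{C}$ the family of cosets in $\tilde H/H$ compatible with $S$; by Corollary \ref{cosetsubgp} it is a subgroup of $\tilde H/H$ of order $N$. Fix once and for all some $x\in M'$ and, for each $\sigma\in\mathcal{C}$, set
\[
M_\sigma := \bigcap_{\epsilon>0}\overline{H^\sigma_{\epsilon,S}.x},
\]
so that $M_0=M'$ by Lemma \ref{mingene}. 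I plan to prove: (a) $M_\sigma$ is independent of the choice of $x\in M'$; (b) $M_\sigma$ is a minimal $(H,S)$-invariant closed subset of $M$; (c) $\bigcup_{\sigma\in\mathcal{C}}M_\sigma=M$.

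For (a): if $x,x'\in M'$, minimality of $M'$ and Lemma \ref{mingene} place $x'\in\bigcap_{\epsilon>0}\overline{H_{\epsilon,S}.x}$, so Corollary \ref{cosetcomposlice}(i) (with the distinguished coset taken to be $0+H$ and the second one $\sigma+H$) yields $\bigcap_{\epsilon>0}\overline{H^\sigma_{\epsilon,S}.x'}\subset\bigcap_{\epsilon>0}\overline{H^\sigma_{\epsilon,S}.x}$; the reverse inclusion follows by swapping $x$ and $x'$. Corollary \ref{cosetcomposlice}(ii) makes $M_\sigma$ nonempty and $(H,S)$-invariant, while $M_\sigma\subset M$ follows from $H^\sigma_{\epsilon,S}\subset\tilde H_{\epsilon,S}$ together with the $(\tilde H,S)$-invariance of $M$.

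The main difficulty is (b). I will show $\bigcap_{\epsilon>0}\overline{H_{\epsilon,S}.y}=M_\sigma$ for every $y\in M_\sigma$, which forces minimality of $M_\sigma$ just as in the argument behind Lemma \ref{mingene}. The inclusion $\subset$ is immediate from Remark \ref{describeinv}. For $\supset$, the crucial point is that $\mathcal{C}$ is a group, so $-\sigma\in\mathcal{C}$ is also compatible: applying Corollary \ref{cosetcomposlice}(i) with the two cosets $\sigma+H$ and $(-\sigma)+H$ to $y\in M_\sigma$ places every $z\in\bigcap_{\epsilon>0}\overline{H^{-\sigma}_{\epsilon,S}.y}$ into $\bigcap_{\epsilon>0}\overline{H^0_{\epsilon,S}.x}=M'$; applying the corollary once more with base point $y$ and the pair $(-\sigma)+H$, $\sigma+H$, and invoking part (a), gives
\[
M_\sigma=\bigcap_{\epsilon>0}\overline{H^\sigma_{\epsilon,S}.z}\subset\bigcap_{\epsilon>0}\overline{H_{\epsilon,S}.y}.
\]
This back-and-forth between cosets of $H$ inside $\tilde H$, made possible precisely because the set of compatible cosets is closed under additive inverse, is the essential mechanism.

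For (c), observe that any non-compatible coset $\tau+H$ satisfies $H^\tau_{\epsilon,S}=\emptyset$ for $\epsilon$ below some $\tau$-dependent threshold; as $\tilde H/H$ is finite, a uniform $\epsilon_0>0$ works, giving the disjoint decomposition $\tilde H_{\epsilon,S}=\bigsqcup_{\sigma\in\mathcal{C}}H^\sigma_{\epsilon,S}$ for $\epsilon<\epsilon_0$. Hence $\overline{\tilde H_{\epsilon,S}.x}=\bigcup_{\sigma\in\mathcal{C}}\overline{H^\sigma_{\epsilon,S}.x}$, and the $(\tilde H,S)$-minimality of $M$ gives $M=\bigcap_{\epsilon>0}\overline{\tilde H_{\epsilon,S}.x}$. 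Because $\mathcal{C}$ is finite and each $\overline{H^\sigma_{\epsilon,S}.x}$ is monotone decreasing in $\epsilon$, the intersection over $\epsilon$ commutes with the finite union, producing $M=\bigcup_{\sigma\in\mathcal{C}}M_\sigma$ as required.
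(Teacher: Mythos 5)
Your proof is correct, and it takes a genuinely different route from the paper's. Your step (c) coincides in substance with the paper's Step~1, and both arguments lean on Corollary \ref{cosetsubgp} (compatible cosets form a subgroup of $\tilde H/H$) and Corollary \ref{cosetcomposlice}. The divergence is in how minimality of the pieces is obtained. The paper sidesteps proving minimality of the $\Omega_n$'s directly: it introduces the auxiliary quantity $m$ (the largest number of minimal pieces achievable among decompositions of the prescribed form), shows $m\ge 1$, and then runs an iterative ``exchange'' argument (Step~3) in which a non-minimal $\Omega_k$ is replaced using a point in a smaller minimal subset, strictly increasing $m$ and forcing $m=N$. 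You instead prove outright, in part (b), that each set $M_\sigma=\bigcap_{\epsilon>0}\overline{H^\sigma_{\epsilon,S}.x}$ (with $x$ taken from a fixed minimal $(H,S)$-invariant $M'$) is itself minimal, by exploiting more directly the group structure of $\mathcal C$: since $-\sigma$ is also compatible, you can travel from $y\in M_\sigma$ back to a point $z\in M'$ via Corollary \ref{cosetcomposlice}(i), use the base-point independence of part (a) to rewrite $M_\sigma=\bigcap_{\epsilon>0}\overline{H^\sigma_{\epsilon,S}.z}$, and travel forward again to conclude $M_\sigma\subset\bigcap_{\epsilon>0}\overline{H_{\epsilon,S}.y}$. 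This isolates, in the single clean observation ``each translate $M_\sigma$ of $M'$ by a compatible coset is again minimal,'' the mechanism that the paper's iterative bookkeeping is implicitly exploiting; the price is the short auxiliary lemma (a), which the paper does not need.
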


\begin{proof}Write the $N$ compatible cosets as $\sigma_1+H,\cdots,\sigma_N+H$ where $\sigma_1=\mathbf 0$.\\

{\noindent\bf Step 1}. First of all, we claim that \begin{equation}\label{minunion1}\bigcup_{n=1}^N\bigcap_{\epsilon>0}\overline{H^{\sigma_n}_{\epsilon,S}.x}=M,\ \forall x\in M\end{equation}

Actually for any $x\in M$, by Lemma \ref{mingene} $\bigcap_{\epsilon>0}\overline{\tilde H_{\epsilon,S}.x}=M$ . Since $\forall n, \bigcap_{\epsilon>0}\overline{H^{\sigma_n}_{\epsilon,S}.x}\subset \bigcap_{\epsilon>0}\overline{\tilde H_{\epsilon,S}.x}$, the left-hand side in (\ref{minunion1}) is contained in $M$. So it suffices to show $\bigcup_{n=1}^N\bigcap_{\epsilon>0}\overline{H^{\sigma_n}_{\epsilon,S}.x}\supset M$.

As $\tilde H_{\epsilon,S}=\bigcup_{n=1}^N H^{\sigma_n}_{\epsilon,S}$, $\tilde H_{\epsilon,S}.x$ is equal to the finite union $\bigcup_{n=1}^NH^{\sigma_n}_{\epsilon,S}$, hence by finiteness $\overline{\tilde H_{\epsilon,S}.x}=\bigcup_{n=1}^N\overline{H^{\sigma_n}_{\epsilon,S}.x}$.

So for all $z\in M=\bigcap_{\epsilon>0}\overline{\tilde H_{\epsilon,S}.x}=\bigcap_{\epsilon>0}\bigcup_{n=1}^N\overline{H^{\sigma_n}_{\epsilon,S}.x}$ and all $k\in\bN$, there exists $n_k\in\{1,\cdots,N\}$ such that $z\in \overline{H^{\sigma_{n_k}}_{k^{-1},S}.x}$. Thus there is a subsequence $k_l\rightarrow\infty$ such that all the $n_{k_l}$'s are the same, denoted by $n(z)$. Hence $z\in \overline{H^{\sigma_{n(z)}}_{{k_l}^{-1},S}.x},\forall l$. Since $\overline{H^{\sigma_{n(z)}}_{\epsilon,S}.x}$ is decreasing as $\epsilon$ decreases, $z$ is in the limit set $\bigcap_{\epsilon>0}\overline{H^{\sigma_{n(z)}}_{\epsilon,S}.x}$. Since $z\in M$ is chosen arbitrarily, this proves
$\bigcup_{n=1}^N\bigcap_{\epsilon>0}\overline{H^{\sigma_n}_{\epsilon,S}.x}\supset M$. So (\ref{minunion1}) holds.\\

{\noindent\bf Step 2}. Denote by $m\leq N$ the largest number such that there are $(H,S)$-invariant closed subsets $\Omega_1,\cdots,\Omega_N$ of $M$ that satisfy the following three conditions:
\begin{eqnarray}
\bullet&\text{At least }m\text { of the }\Omega_n\text{'s are minimal }(H,S)\text{-invariant sets};\\
\bullet&\label{minunion6}\bigcup_{n=1}^N\Omega_n=M;\\
\bullet&\label{minunion7}\exists l,\exists x\in\Omega_l\text{ s.t. }\forall n, \Omega_n=\bigcap_{\epsilon>0}\overline{H^{\sigma_n-\sigma_l}_{\epsilon,S}.x}.
\end{eqnarray}

To obtain the proposition, one needs to show $m=N$. We show first $m\geq 1$.

Since $M$ is $(\tilde H,S)$-invariant, hence $(H,S)$-invariant as well. By Lemma \ref{minZorn} there is a minimal $(H,S)$-invariant closed set $\Omega_1\subset M$. Take an arbitrary point $x\in\Omega_1$, then by Lemma \ref{mingene}, $\bigcap_{\epsilon>0}\overline{H_{\epsilon,S}.x}=\Omega_1$. Take $l=1$ and set $\Omega_n=\bigcap_{\epsilon>0}\overline{H^{\sigma_n}_{\epsilon,S}.x}$, which doesn't change the meaning of $\Omega_1$; this establishes (\ref{minunion7}). Then by Corollary \ref{cosetcomposlice}, $\Omega_1,\cdots,\Omega_N$ are all non-empty $(H,S)$-invariant closed sets. (\ref{minunion6}) follows from (\ref{minunion1}). As $\Omega_1$ is minimal, we see $m\geq 1$.\\

{\noindent\bf Step 3}. We now prove $m=N$.

Suppose $m< N$. Then among the corresponding $(H,S)$-invariant sets $\Omega_1,\cdots,\Omega_N$, there is at least one $\Omega_k$ which is not minimal. By Lemma \ref{minZorn} we may take a minimal $(H,S)$-invariant closed subset $\Omega'_k\subsetneq\Omega_k$. Pick $x'\in \Omega'_k$ then by Lemma \ref{mingene}, $\bigcap_{\epsilon>0}\overline{H_{\epsilon,S}.x'}=\Omega'_k$. In accordance with (\ref{minunion7}), define
\begin{equation}\Omega'_n=\bigcap_{\epsilon>0}\overline{H^{\sigma_n-\sigma_k}_{\epsilon,S}.x'}, \forall n\in\{1,\cdots,n\}.\end{equation} Remark this doesn't change the definition of $\Omega'_k$.

Since $x'\in\Omega_k=\bigcap_{\epsilon>0}\overline{H^{\sigma_k-\sigma_l}_{\epsilon,S}.x}$, it follows from Corollary \ref{cosetcomposlice}.(i) that \begin{equation}\Omega'_n\subset\bigcap_{\epsilon>0}\overline{H^{(\sigma_n-\sigma_k)+(\sigma_k-\sigma_l)}_{\epsilon,S}.x}=\bigcap_{\epsilon>0}\overline{H^{\sigma_n-\sigma_l}_{\epsilon,S}.x}=\Omega_n.\end{equation} On the other hand, by Corollary \ref{cosetcomposlice}.(ii), all the $\Omega'_n$'s are non-empty and $(H,S)$-invariant. By Corollary \ref{cosetsubgp}, $\sigma_n-\sigma_k+H$ runs through $\sigma_1+H,\cdots,\sigma_N+H$ as $n$ varies, so
\begin{equation}\bigcup_{n=1}^N\Omega'_n=\bigcup_{n=1}^N\bigcap_{\epsilon>0}\overline{H^{\sigma_n}_{\epsilon,S}.x'}=M,\end{equation}
where the second equality follows from (\ref{minunion1}). Thus the $\Omega'_n$'s verify (\ref{minunion6}).

For those $n$ such that $\Omega_n$ is a minimal $(H,S)$-invariant closed set, by minimality $\Omega'_n$ is equal to $\Omega_n$ hence is still minimal. However the minimal set  $\Omega'_k$ is not one of these. Therefore there are at least $m+1$ minimal sets among the $\Omega'_n$'s, which contradicts the maximality of $m$. Hence $m=N$, this completes the proof of proposition.\end{proof}

\section{Sets that accumulate at a $V_{\langle S\rangle}$-translated torsion point}\label{patterncase}

From now on let $\alpha$, $\zeta$, $X$, $S$, $\langle S\rangle$ and $V_{\langle S\rangle}$ be as in Theorem \ref{denseorcentral} and $H$ be a subgroup of finite index in $G=\bZ^r$.

\begin{definition}\label{pattern}For a real subspace $V$ of $\bR^{r_1}\oplus\bC^{r_2}$, we say a closed subset $\Omega\subset X$ contains a {\bf $V$-pattern} if there is a sequence of points $\{z_k\}_{k=1}^\infty\subset\Omega$ converging to a $V$-translated torsion point $z\in \Omega$, such that $z_k-z\notin V$ for all $k$.\end{definition}

Here and from now on in similar situations, the difference $z_k-z$ is viewed as a vector in $\bR^{r_1}\oplus\bC^{r_2}$ whose length tends to $0$ as $k\rightarrow\infty$, as $X$ is locally isomorphic to $\bR^{r_1}\oplus\bC^{r_2}$.

Throughout this section we consider a ``finitely generated'' $(H,S)$-invariant set \begin{equation}\label{fgnonisolated}A=\bigcap_{\epsilon>0}\overline{H_{\epsilon,S}.x_1}+\cdots+\bigcap_{\epsilon>0}\overline{H_{\epsilon,S}.x_l}\subset X,\end{equation} whose invariance follows from Corollary \ref{cosetcomposlice} and Lemma \ref{invpm}. By Lemma \ref{fginvariant}, $A=\bigcap_{\epsilon>0}A_\epsilon$ where  \begin{equation}\label{invnbhd}A_\epsilon=\overline{H_{\epsilon,S}.x_1}+\cdots+\overline{H_{\epsilon,S}.x_l}.\end{equation}
Furthermore, $H_{\epsilon-\epsilon_0\,S}.A_{\epsilon_0}\subset A_\epsilon$ for all $\epsilon>\epsilon_0>0$.

The rest of Section \ref{patterncase} will be devoted to the proof of the following proposition.

\begin{proposition}\label{nonisolated} Let $A$ and $A_\epsilon$ be as above. If $A_{\epsilon_0}$ contains a $V_{\langle S\rangle}$-pattern for some $\epsilon_0$, then $A_\epsilon=X$ for all $\epsilon>\epsilon_0$. In particular, if $A_\epsilon$ contains a $V_{\langle S\rangle}$-pattern for all positive $\epsilon$, then $A_\epsilon=X$, for all $\epsilon>0$ and thus $A=X$ as well.\end{proposition}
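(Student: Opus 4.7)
My plan is to use the $V_{\langle S\rangle}$-pattern in $A_{\epsilon_0}$ to extract a rich set of ``limit displacement'' vectors outside $V_{\langle S\rangle}$, and then invoke total irreducibility (hypothesis~(2)) to amplify these into full translation-invariance of $A_\epsilon$ for $\epsilon>\epsilon_0$.

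\textbf{Rescaling the pattern.} Write $w_k = z_k - z$, viewed via the local isomorphism as a vector in $\bR^{r_1}\oplus\bC^{r_2}$ of vanishing norm, and decompose $w_k = w_k^\parallel + w_k^\perp$ along $V_{\langle S\rangle} \oplus V_{\langle S\rangle}^\perp$, where $V_{\langle S\rangle}^\perp := \bigoplus_{i\notin\langle S\rangle}V_i$. The pattern hypothesis gives $w_k^\perp \neq 0$. For any $\bfn \in H_{\delta,S}$, Lemma~\ref{cosetcompo}(ii) yields $\zeta^\bfn.z_k,\,\zeta^\bfn.z \in A_{\epsilon_0+\delta}$, and their difference is $\zeta^\bfn.w_k$. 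Since $\lambda_i\in L_S$ for $i\in\langle S\rangle$, the rescaling on $V_{\langle S\rangle}$ is of order $e^{O(\delta)}$, whereas for $i\notin\langle S\rangle$ we have $\lambda_i\notin L_S$; combined with $\dim P_S = r-\dim L_S \geq 2$ from hypothesis~(1), this gives multi-dimensional freedom to choose $\bfn_k \in H_{\delta_k,S}$ with $\delta_k\to 0$ so that $\zeta^{\bfn_k}.w_k \to u$ for some prescribed nonzero $u\in V_{\langle S\rangle}^\perp$ (its $V_{\langle S\rangle}$-part vanishes because both $w_k^\parallel\to 0$ and the rescaling factor is $e^{O(\delta_k)}$). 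Passing to a further subsequence, $\zeta^{\bfn_k}.z \to z'$ and $\zeta^{\bfn_k}.z_k \to z' + u$, both lying in $A_\epsilon$ for every $\epsilon>\epsilon_0$ (once $\delta_k < \epsilon - \epsilon_0$). Varying the construction yields a set $U\subset V_{\langle S\rangle}^\perp$ of such limit displacements.

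\textbf{From displacements to translation-invariance.} Each $u\in U$ provides a witness $z'\in A_\epsilon$ with $z'+u\in A_\epsilon$. Iterating the rescaling procedure starting from every base point in $A_\epsilon$, and using the $(H,S)$-invariance recorded in Corollary~\ref{cosetcomposlice} and the propagation in Lemma~\ref{fginvariant}(ii), one upgrades these pointwise displacements into genuine translation-invariance: $A_\epsilon + u \subset A_\epsilon$ for every $u$ in the closed additive subgroup $\langle U\rangle\subset X$ generated by $U$. Here one crucially exploits the additive decomposition $A_\epsilon = \sum_{i=1}^l \overline{H_{\epsilon,S}.x_i}$, which allows a translation $u$ achieved near one base point to be absorbed into one of the summands and propagated across all of $A_\epsilon$.

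\textbf{Total irreducibility closes the loop.} Using hypothesis~(2), fix $\bfn_*\in H_{\delta,S}$ with $\alpha^{\bfn_*}$ totally irreducible, for $\delta>0$ arbitrarily small. The subgroup $\langle U\rangle$ can be taken $\zeta^{\bfn_*}$-invariant (by including iterated images in its generating set) and is infinite because $U$ contains nonzero elements obtained at arbitrarily small scales. Remark~\ref{alphazetairr} then forbids any closed $\zeta^{\bfn_*}$-invariant proper subgroup of $X$ of positive dimension, so $\langle U\rangle = X$. Combined with the translation-invariance established above, this forces $A_\epsilon = X$, as desired. The ``in particular'' assertion then follows by applying the main statement to each $\epsilon$ separately. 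The main obstacle will be the middle paragraph: rigorously turning ``there exist $z',z'+u\in A_\epsilon$'' into ``$A_\epsilon + u \subset A_\epsilon$'', which requires delicately spreading the translation uniformly across all summands and base points via the $(H,S)$-invariance machinery of \S\ref{basicinv}. A subsidiary Diophantine point is the construction in Step~1 of $\bfn_k\in H_{\delta_k,S}$ making $\zeta^{\bfn_k}.w_k$ converge to a prescribed nonzero limit, which uses the $2$-dimensionality of $P_S$ essentially.
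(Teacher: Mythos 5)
Your strategy diverges sharply from the paper's, and the divergence hides a genuine gap rather than a cosmetic one. The paper proves Proposition~\ref{nonisolated} via a chain: first concentrate the pattern into a single coarse Lyapunov subspace (Proposition~\ref{concentration}), then amplify the small displacement into arbitrarily long line segments and pass to a full line $y+L\subset A_\epsilon$ (Lemma~\ref{netlongline}, Proposition~\ref{line}), then use the S-unit equation theorem of Evertse--Schlickewei--Schmidt together with Berend's filling lemma to show some $\zeta^\bfn.L$ projects $\delta$-densely into $X$ (Lemmas~\ref{nonequivunits}, \ref{eigenirrational}, \ref{filling} and Proposition~\ref{linerotation}). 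Nowhere does the paper attempt to show $A_\epsilon$ is translation-invariant; that is exactly the claim the paper's route is designed to avoid.

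Your middle step --- upgrading ``there exist $z', z'+u\in A_\epsilon$'' to ``$A_\epsilon + u\subset A_\epsilon$'' --- is the crux, and you correctly flag it as the main obstacle, but you offer no mechanism that closes it. The set $A_\epsilon = \sum_i \overline{H_{\epsilon,S}.x_i}$ is not a coset of any group, and the $(H,S)$-invariance machinery (Corollary~\ref{cosetcomposlice}, Lemma~\ref{fginvariant}) propagates applications of $\zeta^\bfn$, not additive translations; there is no reason a displacement $u$ witnessed at one point should transport to any other point of $A_\epsilon$. Absent that, the invocation of total irreducibility in your third step never gets off the ground: you have a set $U$ of achievable displacements, not a group that acts on $A_\epsilon$ by translation.

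Two further points. First, your rescaling step asserts you can steer $\zeta^{\bfn_k}.w_k$ to a prescribed nonzero $u\in V_{\langle S\rangle}^\perp$; this is not generally achievable. If $w_k$ has components across several coarse Lyapunov subspaces with incomparable growth rates along $P_S$, no choice of $\bfn_k$ controls all simultaneously --- one blows up while another vanishes. The paper handles this with the careful ``cut and shrink'' induction of Proposition~\ref{concentration}, which forces the surviving direction into a single $V_{[\lambda]}$; you need that argument, not a vague appeal to $\dim P_S\geq 2$. Second, you make no use of the Evertse--Schlickewei--Schmidt bound on unit equations, which is the decisive Diophantine input in the paper's proof of Proposition~\ref{linerotation}: total irreducibility alone does not rule out the possibility that every $\zeta^\bfn.L$ stays inside the kernel of a fixed nontrivial character for all relevant $\bfn$. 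That finiteness result is what makes the filling lemma applicable.
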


\subsection{Orbits of $V_{\langle S\rangle}$-translated torsion points} To begin with, we remark the ``only if'' part of  Theorem \ref{denseorcentral} is not difficult to see.

Actually,  suppose $x=x_*+v$ where $x_*\in X$ is a torsion point and $v\in V_{\langle S\rangle}$. Then the orbit $G.x_*$ is a finite set of torsion points (because $qx'=0$ for any point $x'\in G.x_*$ where $q$ denotes the order of $x_*$ and there are only finitely many torsion points of order $q$ in $X$).

By definition of $\langle S\rangle$, there are constants $c_{ij}\in\bR, \forall j\in\langle S\rangle, \forall i\in S$ such that $\lambda_j=\sum_{i\in S}c_{ij}\lambda_i$. Denote $c=\max_{j\in\langle S\rangle}\sum_{i\in S}|c_{ij}|$. Then for all $j\in\langle S\rangle$, \begin{equation}\label{lambdaspan}|\lambda_j(\bfn)|\leq\sum_{i\in S}|c_{ij}\lambda_i(\bfn)|\leq c\max_{i\in S}|\lambda_i(\bfn)|.\end{equation}

Assume  $|\lambda_i(\bfn)|<\epsilon, \forall i\in S$, then \begin{equation}\label{centraldeform}|\zeta_j^\bfn|\in(e^{-c\epsilon},e^{c\epsilon}),\forall j\in\langle S\rangle.\end{equation}

We can write $v=\sum_{j\in\langle S\rangle}v_j$ with $v_j\in V_j$. Then $\zeta^\bfn.v$ is in $V_{\langle S\rangle}$ and its $V_j$ coordinate is $\zeta_j^\bfn v_j$. Hence by (\ref{centraldeform}), $e^{-c\epsilon}|v|<|\zeta^\bfn.v|<e^{c\epsilon}|v|$.

So $\zeta^\bfn.x=\zeta^\bfn.x_*+\zeta^\bfn.v$ belongs to $D_\epsilon$ where \begin{equation}D_\epsilon=\{x'+v':x'\in G.x_*, v'\in V_{\langle S\rangle}, |v'|\leq e^{c\epsilon}|v|\}\end{equation} is a finite union of $V_{\langle S\rangle}$-discs centered at torsion points.

Furthermore, let $\epsilon<\frac1{c+1}$. If $\bfn\in G_{\epsilon,S}$, then in addition to (\ref{centraldeform}), $\|\beta_j(\bfn)\|<\epsilon,\forall j\in\langle S\rangle$. Note by definition $\|\beta_j(\bfn)\|=\Arg\zeta_j^\bfn$ where $\Arg$ denotes the principal value of complex argument, so \begin{equation}\begin{split}|\zeta_j^\bfn-1|=&|e^{\lambda_j(\bfn)+\imag\Arg\zeta_j^\bfn}-1|\leq 2|\lambda_j(\bfn)+\imag\Arg\zeta_j^\bfn|\\
\leq &2(c\epsilon+\epsilon)=2(c+1)\epsilon,\end{split}\end{equation} where we used the facts that $|\lambda_j(\bfn)+\imag\Arg\zeta_j^\bfn|\leq (c+1)\epsilon<1$ and $|e^z-1|\leq 2|z|$ as long as $|z|< 1$. Therefore $|\zeta_j^\bfn.v_j-v_j|\leq 2(c+1)\epsilon|v_j|$, $\forall j\in\langle S\rangle$. In consequence $\forall\bfn\in G_{\epsilon,S}$, \begin{equation}|\zeta^\bfn.v-v|\leq 2(c+1)\epsilon|v|,\end{equation} and thus $\zeta^\bfn.x\in N_\epsilon$ where \begin{equation}N_\epsilon=\{x'+v':x'\in G.\epsilon_*,v'\in V_{\langle S\rangle},|v'-v|<2(c+1)\epsilon|v|\}.\end{equation}

Because of finiteness of $G.x_*$, both $D_\epsilon$ and $N_\epsilon$ are closed in $X$. Moreover, notice $\bigcap_{\epsilon>0}N_\epsilon=\{x'+v:x'\in G.x_*\}$, which is finite.

Therefore, we have actually shown the following lemma:

\begin{lemma}\label{torsionorbit}Let $\zeta$, $X$, $I$, $S$, $\langle S\rangle$ and $V_{\langle S\rangle}$ be as in Theorem \ref{denseorcentral} and $x$ be a $V_{\langle S\rangle}$-translated torsion point in $X$. then for all $\epsilon>0$:
\begin{enumerate}
\item[(i)] The set (\ref{Xpartialorb}) is contained in a finite union of $V_{\langle S\rangle}$-discs centered at torsion points;
\item[(ii)] In addition, $\bigcap_{\epsilon>0}\overline{G_{\epsilon,S}.x}$ is a finite set of $V_{\langle S\rangle}$-translated torsion points.\end{enumerate}\end{lemma}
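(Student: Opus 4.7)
The plan is to reduce the problem to controlling separately how the partial action moves the torsion part $x_*$ and the vector part $v$ in the decomposition $x = x_* + v$ with $x_* \in X$ a torsion point of some order $q$ and $v \in V_{\langle S\rangle}$. The orbit $G.x_*$ is automatically finite, since every $\zeta^\bfn.x_*$ is also annihilated by multiplication by $q$, and the $q$-torsion subgroup of the compact group $X$ is finite.

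The core algebraic input is the defining property of $\langle S\rangle$: every $\lambda_j$ with $j \in \langle S\rangle$ lies in $L_S$, so it may be written as a linear combination $\lambda_j = \sum_{i \in S} c_{ij} \lambda_i$. Setting $c = \max_{j \in \langle S\rangle} \sum_{i \in S} |c_{ij}|$, the bound $|\lambda_i(\bfn)| < \epsilon$ for all $i \in S$ propagates to $|\lambda_j(\bfn)| < c\epsilon$ for all $j \in \langle S\rangle$, which in multiplicative form gives $|\zeta_j^\bfn| \in (e^{-c\epsilon}, e^{c\epsilon})$. Since $V_{\langle S\rangle}$ decomposes as $\bigoplus_{j \in \langle S\rangle} V_j$ and $\zeta^\bfn$ acts on $V_j$ by scalar multiplication by $\zeta_j^\bfn$, writing $v = \sum_j v_j$ I get $\zeta^\bfn.v \in V_{\langle S\rangle}$ with $|\zeta^\bfn.v| \leq e^{c\epsilon}|v|$. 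Assembling these observations yields part (i): $\zeta^\bfn.x$ lies in the finite union
\begin{equation*}
D_\epsilon = \bigcup_{x' \in G.x_*} \{x' + v' : v' \in V_{\langle S\rangle},\, |v'| \leq e^{c\epsilon}|v|\},
\end{equation*}
which is a finite union of $V_{\langle S\rangle}$-discs centered at torsion points.

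For part (ii), the extra information available when $\bfn \in G_{\epsilon,S}$ is the bound $\|\beta_j(\bfn)\| < \epsilon$ for every $j \in I$, in particular for $j \in \langle S\rangle$. Combined with the Lyapunov estimate $|\lambda_j(\bfn)| < c\epsilon$, and using the elementary bound $|e^z - 1| \leq 2|z|$ for $|z| \leq 1$, I obtain $|\zeta_j^\bfn - 1| \leq 2(c+1)\epsilon$ once $\epsilon < \frac{1}{c+1}$. Consequently $|\zeta^\bfn.v - v| \leq 2(c+1)\epsilon|v|$, so the partial orbit sits inside the neighborhood
\begin{equation*}
N_\epsilon = \bigcup_{x' \in G.x_*} \{x' + v' : v' \in V_{\langle S\rangle},\, |v' - v| < 2(c+1)\epsilon|v|\}.
\end{equation*}
Each $N_\epsilon$ is closed by finiteness of $G.x_*$, and as $\epsilon \to 0$ the intersection $\bigcap_{\epsilon > 0} N_\epsilon$ collapses to the finite set $\{x' + v : x' \in G.x_*\}$, which consists entirely of $V_{\langle S\rangle}$-translated torsion points. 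Since $\overline{G_{\epsilon,S}.x} \subset N_\epsilon$, this gives the claim.

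There is no real analytic obstacle here; the proof is a bookkeeping exercise in converting the hypotheses $|\lambda_i(\bfn)| < \epsilon$ and $\|\beta_j(\bfn)\| < \epsilon$ into uniform multiplicative bounds on $\zeta_j^\bfn$ for $j \in \langle S\rangle$. The only subtlety worth flagging is the asymmetry between (i) and (ii): part (i) uses only the Lyapunov constraints and therefore can only conclude containment in a thickened disc, whereas part (ii) exploits the argument constraints to pin $\zeta^\bfn.v$ close to $v$ itself, yielding the stronger conclusion that the accumulation set is finite rather than a full disc.
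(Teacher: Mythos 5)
Your proof is correct and follows exactly the same route as the paper's: decompose $x = x_* + v$, bound $|\zeta_j^\bfn|$ for $j \in \langle S\rangle$ via the linear dependence $\lambda_j = \sum_{i\in S} c_{ij}\lambda_i$, and then for part (ii) additionally use the argument bounds $\|\beta_j(\bfn)\| < \epsilon$ together with $|e^z-1| \leq 2|z|$ to trap $\zeta^\bfn.v$ near $v$. The constants, the sets $D_\epsilon$ and $N_\epsilon$, and the observation that $\bigcap_{\epsilon>0} N_\epsilon = \{x'+v : x' \in G.x_*\}$ all match the paper's argument.
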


It can be shown that if the conditions in Theorem \ref{denseorcentral} are satisfied, then $\dim V_{\langle S\rangle}<d$.

\begin{lemma}\label{centralproper}In the setting of Theorem \ref{denseorcentral}, $\dim V_{\langle S\rangle}$ must be a proper subspace of $\bR^{r_1}\oplus\bC^{r_2}$.\end{lemma}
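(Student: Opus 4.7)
The plan is to argue by contradiction: assume $V_{\langle S\rangle}=\bR^{r_1}\oplus\bC^{r_2}$, which is equivalent to $\langle S\rangle=I$. By the definition of $\langle S\rangle$, this means $\lambda_i\in L_S$ for every $i\in I$, so $\mathrm{span}_\bR\{\lambda_i:i\in I\}\subseteq L_S$. It therefore suffices to prove that $\{\lambda_i:i\in I\}$ spans all of $(\bR^r)^*$, for then $\dim L_S\geq r$, contradicting the standing hypothesis $\dim L_S\leq r-2$ from Theorem \ref{denseorcentral}.

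To prove this spanning claim, I would introduce the linear map $L:\bR^r\to\bR^{r_1+r_2}$ defined by $L(\bfn)=(\lambda_i(\bfn))_{i\in I}$. On $\bZ^r$, this is just the composition of the group embedding $\zeta:\bZ^r\hookrightarrow U_K$ from Proposition \ref{Gfield} with the logarithmic embedding $u\mapsto(\log|\sigma_i(u)|)_{i\in I}$ of the units of $K$. The key is to show that $L$ is injective as an $\bR$-linear map; equivalently, that its coordinate functionals span the dual space.

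The argument proceeds in two steps. First, I would show $L$ is injective on the lattice $\bZ^r$: if $L(\bfn)=0$, then $|\sigma_i(\zeta^\bfn)|=1$ for $i=1,\dots,r_1+r_2$, and hence also for the conjugate embeddings, so all Galois conjugates of $\zeta^\bfn$ lie on the unit circle; by Kronecker's theorem $\zeta^\bfn$ is then a root of unity, and raising to its order gives $\zeta^{k\bfn}=1$ for some $k\neq 0$, forcing $\bfn=0$ by injectivity of $\zeta$ and torsion-freeness of $\bZ^r$. Second, $L(\bZ^r)$ is contained in the image of the logarithmic embedding of $U_K$, which is discrete in $\bR^{r_1+r_2}$ by Dirichlet's unit theorem, so $L(\bZ^r)$ is a discrete subgroup abstractly isomorphic to $\bZ^r$, hence a rank-$r$ lattice inside some $r$-dimensional $\bR$-subspace of $\bR^{r_1+r_2}$. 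This forces $\dim_\bR L(\bR^r)\geq r$, and since $\bR^r$ has dimension $r$ to begin with, $L$ must be injective as a linear map. Dually, $\{\lambda_i:i\in I\}$ spans $(\bR^r)^*$, closing the argument.

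I do not expect a genuine obstacle here; the only subtlety worth being careful about is not confusing injectivity of $L$ on the lattice $\bZ^r$ (an arithmetic statement proved via Kronecker's theorem and the injectivity of $\zeta$) with injectivity on $\bR^r$ (which is a separate consequence of the discreteness of $L(\bZ^r)$ inside the unit lattice). Everything else is routine linear algebra and a direct application of the hypothesis $\dim L_S\leq r-2$.
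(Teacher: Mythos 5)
Your argument is correct, and it takes a genuinely different route from the paper's. Both proofs begin by assuming $\langle S\rangle = I$ and reduce to showing that the $(r_1+r_2)\times r$ matrix $M=\big(\lambda_i(\bfe_k)\big)_{i\in I,\,1\le k\le r}$ cannot have rank at most $r-2$, and both ultimately invoke Dirichlet's unit theorem; but the ways they extract the contradiction differ. The paper first extends $\zeta$ to a group embedding of $\bZ^{r_1+r_2-1}$ into $U_K$ (supplementing $\bfe_1,\dots,\bfe_r$ by further vectors $\bfe_{r+1},\dots,\bfe_{r_1+r_2-1}$ to reach full rank in $U_K$) and forms the larger matrix $\tilde M=\big(\log|\sigma_i(\zeta^{\bfe_k})|\big)_{i\in I,\,1\le k\le r_1+r_2-1}$; from $\rank M\le r-2$ it deduces $\rank\tilde M\le r_1+r_2-3$, which contradicts the Dirichlet fact $\rank\tilde M=r_1+r_2-1$. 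You instead prove $\rank M=r$ outright by showing the linear map $L=(\lambda_i)_{i\in I}\colon\bR^r\to\bR^{r_1+r_2}$ is injective: Kronecker's theorem (an algebraic integer with all archimedean conjugates on the unit circle is a root of unity), combined with injectivity of $\zeta$ and torsion-freeness of $\bZ^r$, gives injectivity of $L$ on the lattice $\bZ^r$; Dirichlet gives discreteness of $L(\bZ^r)$ inside the log-unit lattice; and a discrete subgroup of $\bR^{r_1+r_2}$ abstractly of rank $r$ spans an $r$-dimensional real subspace, forcing $L$ to be injective as a linear map. Your route avoids the bookkeeping of extending $\zeta$ to a finite-index subgroup of $U_K$, at the cost of one extra ingredient (Kronecker's theorem). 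Both arguments are sound; yours is the more self-contained of the two and gives the slightly sharper conclusion $\rank M = r$ rather than just ruling out $\rank M\le r-2$.
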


\begin{proof}Suppose the lemma fails, then $V_j\subset V_{\langle S\rangle}, \forall j\in I$, or equivalently, $\langle S\rangle=I$. So the functionals $\{\lambda_i:i\in I\}$ span $L_S$, which, by assumption in Theorem \ref{denseorcentral}, is a subspace of dimension at most $r-2$ in $(\bR^r)^*$. If $\bfe_1,\cdots,\bfe_r$ are a basis of $\bZ^r$, then it is equivalent to say that the rank of the matrix 
\begin{equation}M=\big(\lambda_i(\bfe_k)\big)_{i\in I;1\leq k\leq r}=\big(\log|\sigma_i(\zeta^{\bfe_k})|\big)_{i\in I;1\leq k\leq r}\end{equation} is at most $r-2$.

Recall $U_K$ is a finitely generated abelian group of rank $r_1+r_2-1$, so we can always extend $\zeta$ to a group embedding of $\bZ^{r_1+r_2-1}$ into $U_K$. In other words, if we regard $\bZ^r$ as a subgroup of $\bZ^{r_1+r_2-1}$ and supplement $\bfe_1,\cdots,\bfe_r$ by $\bfe_{r+1},\cdots,\bfe_{r_1+r_2-1}$ to form a basis of $\bZ^{r_1+r_2-1}$, then there are elements $\zeta^{\bfe_{r+1}},\cdots,\zeta^{\bfe_{r_1+r_2-1}}\in U_K$ such that $\zeta^{\bfe_1},\cdots,\zeta^{\bfe_{r_1+r_2-1}}$ generate a subgroup of rank $r_1+r_2-1$, i.e. of finite index, in $U_K$.

Consider the $(r_1+r_2)\times(r_1+r_2-1)$ matrix \begin{equation}\tilde M=\big(\log|\sigma_i(\zeta^{\bfe_k})|\big)_{i\in I;1\leq k\leq r_1+r_2-1}.\end{equation} Since $\tilde M$ has $M$ as a $(r_1+r_2)\times r$ submatrix, \begin{equation}\label{ranktoosmall}\begin{split}\rank\tilde M\leq &\rank M+(r_1+r_2-1)-r\\
\leq&(r-2)+(r_1+r_2-1)-r\\
=&r_1+r_2-3.
\end{split}\end{equation}

However, because $\eta^{\bfe_1},\cdots,\eta^{\bfe_{r_1+r_2-1}}$ generate a finite-index subgroup of $U_K$, the $\bZ$-span of the rows of $\tilde M$ has finite index in $\big\{\big(\log|\sigma_i(\theta)|\big)_{i\in I}:\theta\in U_K\}$, which is a discrete subgroup of rank $r_1+r_2-1$ in $\bR^I$ by Dirichlet's Unit Theorem. It follows that $\rank\tilde M=r_1+r_2-1$, which contradicts (\ref{ranktoosmall}). This completes the proof.\end{proof}

\begin{corollary}\label{central} In the setting of Theorem \ref{denseorcentral}, if $x\in X$ is a $V_{\langle S\rangle}$-translated torsion point, then the set (\ref{Xpartialorb}) is not dense in $X$.\end{corollary}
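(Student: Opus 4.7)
The plan is to combine Lemma \ref{torsionorbit}(i) with Lemma \ref{centralproper} in an essentially one-step argument. By Lemma \ref{torsionorbit}(i), for any $\epsilon>0$ the set \eqref{Xpartialorb} is contained in a finite union $D_\epsilon$ of $V_{\langle S\rangle}$-discs centered at torsion points; by definition such a disc is contained in a translate of the linear subspace $V_{\langle S\rangle}$ inside $X$, so $D_\epsilon$ is closed and lies in a finite union of cosets of $\pi(V_{\langle S\rangle})$. It therefore suffices to show that every such finite union is nowhere dense in $X$.

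To see this, I would invoke Lemma \ref{centralproper}, which gives $\dim V_{\langle S\rangle}<\dim(\bR^{r_1}\oplus\bC^{r_2})=d$. Since the quotient map $\pi:\bR^{r_1}\oplus\bC^{r_2}\to X$ is a local isometry, any translate $x_*+\pi(V_{\langle S\rangle})$ is a submanifold of $X$ of strictly smaller dimension, and in particular has empty interior. A finite union of sets with empty interior still has empty interior by the Baire category theorem (or simply because the union of finitely many closed nowhere-dense sets is nowhere dense in the locally compact Hausdorff space $X$). Hence $D_\epsilon\neq X$, and the closure of \eqref{Xpartialorb} is properly contained in $X$.

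The only nontrivial point is ensuring that each $V_{\langle S\rangle}$-disc $\{x_*+v:v\in V_{\langle S\rangle},|v|\le R\}$ truly has empty interior after projection to $X$; this is where Lemma \ref{centralproper} is essential, as otherwise $V_{\langle S\rangle}$ could fill all of $\bR^{r_1}\oplus\bC^{r_2}$ and the disc could have interior. Given Lemma \ref{centralproper}, no obstacle remains, so the proof is a direct corollary of these two preceding results.
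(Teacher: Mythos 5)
Your proposal is correct and follows essentially the same route as the paper: combine Lemma \ref{torsionorbit}(i) (the partial orbit sits in a finite union of $V_{\langle S\rangle}$-discs centered at torsion points) with Lemma \ref{centralproper} ($\dim V_{\langle S\rangle}<d$) to conclude that the closure cannot be all of $X$. The paper states this via a dimension count while you phrase it via nowhere-density and Baire category, but this is merely a cosmetic difference.
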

\begin{proof} By Lemma \ref{torsionorbit}, this set is in a finite union of compact $V_{\langle S\rangle}$-discs hence so is its closure. Thus the dimension of the closure is strictly less than $\dim X=d$ by Lemma (\ref{centralproper}), which implies the corollary.\end{proof}

Theorem \ref{denseorcentral'}.(ii) is covered by Lemma \ref{torsionorbit}. The rest of the paper will be focusing on the proof of Theorem \ref{denseorcentral'}.(i).

\subsection{Concentration to a coarse Lyapunov subspace} Before working on $X$, we study first how elements of $H_{\epsilon,S}$ act on the linear space $\bR^{r_1}\oplus\bC^{r_2}=\bigoplus_{i\in I}V_i$.

We show that $H_{\epsilon,S}$ is a good enough approximation to the hyperplane $P_S$ in (\ref{planenonhyp}), This uses the ideas already presented in \S\ref{suspension}.

\begin{lemma}\label{nongroupapprox}For all $\epsilon>0$, there is a constant $C=C(\epsilon,H)$ such that $\forall\eta\in P_S$, $\exists\bfn\in H_{\epsilon,S}$ such that $|\bfn-\eta|<C$.\end{lemma}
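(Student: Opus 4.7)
The plan is to combine cocompactness of
$$
\Omega_H := \bigl\{(\bfn, (\beta_j(\bfn))_{j \in I}) : \bfn \in H\bigr\}
$$
as a discrete cocompact subgroup of $\cG := \bR^r \times (\bR/2\pi\bZ)^{r_1+r_2}$ --- the $H$-analogue of the cocompactness of $\Omega$ recorded in \S\ref{suspension}, valid since $H$ has finite index in $\bZ^r$ --- with a compactness argument on the torus quotient $T := \cG/\Omega_H$.

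First, I observe that $H_{\epsilon,S}$ coincides with the $\bR^r$-projection of $\Omega_H \cap Y_\epsilon$, where
$$
Y_\epsilon := \bigl\{(\mathbf{t}, b) \in \cG : |\lambda_i(\mathbf{t})| < \epsilon\ \forall i \in S,\ \|b_j\| < \epsilon\ \forall j \in I\bigr\}
$$
is an open neighborhood of the closed subgroup $W := P_S \times \{0\}$.

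Let $\pi : \cG \to T$ denote the quotient map. The image $\pi(W)$ is a connected subgroup of $T$, so its closure $T_1 := \overline{\pi(W)}$ is a closed subtorus in which $\pi(W)$ is dense. Fix a bounded symmetric open neighborhood $U_0$ of $0$ in $\cG$ with $U_0 \subset Y_{\epsilon/2}$ and with $|u_{\bR^r}| < \epsilon/2$ for every $u \in U_0$; this is possible because the functionals $\lambda_i$ are continuous. By compactness of $T_1$, there exist finitely many $w_1, \ldots, w_N \in W$ with $T_1 \subset \bigcup_{k=1}^{N}(\pi(w_k) + \pi(U_0))$. Set $C := \epsilon/2 + \max_k |w_k|$.

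Given $\eta \in P_S$, put $w^* := (\eta, 0) \in W$ and pick $k$ with $\pi(w^* - w_k) \in \pi(U_0)$; this produces $u \in U_0$ such that $\omega := w^* - w_k - u$ lies in $\Omega_H$. Writing $\omega = (\bfn, (\beta_j(\bfn))_{j \in I})$, the fact that $w^*, w_k \in W = P_S \times \{0\}$ gives
$$
\bfn - (\eta - w_{k,\bR^r}) = -u_{\bR^r} \quad \text{and} \quad (\beta_j(\bfn))_j = -u_{\bT^{r_1+r_2}}.
$$
Since $\eta - w_{k,\bR^r} \in P_S$ and $u \in U_0 \subset Y_{\epsilon/2}$, one gets $|\lambda_i(\bfn)| = |\lambda_i(u_{\bR^r})| < \epsilon/2$ for $i \in S$, $\|\beta_j(\bfn)\| = \|u_j\| < \epsilon/2$ for $j \in I$, and $|\bfn - \eta| \leq |w_k| + |u_{\bR^r}| \leq C$. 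Hence $\bfn \in H_{\epsilon,S}$ satisfies $|\bfn - \eta| < C$.

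The main technical step is the compactness of $T_1$, which is automatic because $T$ is itself compact and $T_1 \subset T$ is closed. Everything else is a routine bookkeeping of coordinates using the product structure $\cG = W \oplus W^\perp$ and the linearity of $\lambda_i$.
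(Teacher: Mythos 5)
Your proof is correct and follows essentially the same route as the paper's: both pass to the compact abelian quotient of $\bR^r\times(\bR/2\pi\bZ)^{r_1+r_2}$ by the lattice built from the acting group, observe that the image of $P_S\times\{0\}$ is dense in a subtorus, and use compactness of that subtorus to pick finitely many base points which give a uniform approximation radius $C$. Your version is in fact slightly more careful than the printed one: the paper quotients by the lattice $\Omega$ coming from all of $\bZ^r$ and then asserts the existence of $\bfn\in H$, a step that literally requires working with the finite-index sublattice $\Omega_H$ exactly as you do.
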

\begin{proof} Let $\Omega$ be as in (\ref{suslattice}), then  $\big(\bR^r\times(\bR/2\pi\bZ)^{r_1+r_2}\big)/\Omega\cong\bT^{r+r_1+r_2}$. Denote the natural projection by
\begin{equation}p:\bR^r\times(\bR/2\pi\bZ)^{r_1+r_2}\mapsto \big(\bR^r\times(\bR/2\pi\bZ)^{r_1+r_2}\big)/\Omega.\end{equation}

Take the product $P_S\times\{0\}$ where $0$ denotes the trivial vector in $(\bR/2\pi\bZ)^{r_1+r_2}$. Then $p(P_S\times\{0\})$ is a connected subgroup of $\big(\bR^r\times(\bR/2\pi\bZ)^{r_1+r_2}\big)/\Omega$ and therefore $\overline{p(P_S\times\{0\})}$ is a connected closed subgroup, which has to be a subtorus containing $0$. For any $\epsilon>0$, take an $\frac\epsilon2$-dense subset $E$ of $\overline{p(P_S\times\{0\})}$ (i.e. $\forall z\in\overline{p(P_S\times\{0\})}$, $\exists z'\in p(P_S\times\{0\})$, $\|z-z'\|<\frac\epsilon2$). By compactness of $\overline{p(P_S\times\{0\})}$, we can choose $E$ to be finite. By density of $p(P_S\times\{0\})$, we may slightly modify $E$ so that it is inside $p(P_S\times\{0\})$ and is $\epsilon$-dense in $\overline{p(P_S\times\{0\})}$. Suppose \begin{equation}E=\{p\big((\eta_k,0)\big): k=1,\cdots,m\}\end{equation} where $\eta_k\in P_S$. Let $C=\max_{l=1}^m|\eta_k|+\epsilon$. Then for all $\eta\in P_S$, there is an $\eta_k$ such that $\big\|p\big((\eta-\eta_k,0)\big)\big\|=\big\|p\big((\eta,0)\big)-p\big((\eta_k,0)\big)\big\|<\epsilon$. By construction of $\Omega$, $\exists\bfn\in H$ such that $|\eta-\eta_k-\bfn|<\epsilon$ and $\beta_j(\bfn)$ is within distance $\epsilon$ from zero modulo $2\pi$ for all $j\in I$.

It follows first that $|\eta-\bfn|<|\eta_k|+\epsilon\leq C$. Hence for all $i\in S$, by construction of $P_S$, $\lambda_i(\eta-\eta_k)=0$ and \begin{equation}|\lambda_i(\bfn)|\leq\|\lambda_i\|\cdot|\eta-\eta_k-\bfn|<\|\lambda_i\|\epsilon.\end{equation} Moreover, $\|\beta_j(\bfn)\|<\epsilon, \forall j\in I$. Therefore $\bfn\in H_{\max(\max_{i\in S}\|\lambda_i\|, 1)\epsilon,S}$. By replacing $\epsilon$ with $\frac\epsilon{\max(\max_{i\in S}\|\lambda_i\|, 1)}$, we obtain the lemma.\end{proof}

Now for every $i\in S$, denote by $\lambda_i^S\in (P_S)^*$ the restriction of the Lyapunov functional $\lambda_i$ to $P_S$. First, notice \begin{equation}\label{vanishSgene}\lambda_i^S\equiv 0\Leftrightarrow\lambda_i\in P_S^\bot=L_S\Leftrightarrow i\in\langle S\rangle,\end{equation} which was the reason to introduce $\langle S\rangle$ in Theorem \ref{denseorcentral}.

By grouping together $\lambda_i^S$'s that are positively proportional to each other, the set of indices $I$ decomposes into a disjoint union of subsets of the form
\begin{equation}I_{[\lambda]}=\{i\in I:\lambda_i^S\in\bR_+\lambda\},\end{equation} where $\lambda\in (P_S)^*$. There is a finite collection $\Lambda^S$ of equivalence classes of the form $[\lambda]=\bR_+\lambda$ such that for any $[\lambda]\in\Lambda^S$, $I_{[\lambda]}$ is not empty and \begin{equation}I=\bigsqcup_{[\lambda]\in\Lambda^S}I_{[\lambda]}.\end{equation}

\begin{definition}\label{cLs} The {\bf coarse Lyapunov subspace} associated to $[\lambda]\in\Lambda^S$ is \begin{equation}V_{[\lambda]}=\bigoplus_{i\in I_{[\lambda]}}V_i.\end{equation}

For a subset $\Lambda\subset\Lambda^S$, let \begin{equation}V_{\Lambda}=\bigoplus_{[\lambda]\in\Lambda}V_{[\lambda]}.\end{equation} \end{definition}

By (\ref{vanishSgene}), the Lyapunov subspace $V_{[0]}$ corresponding to the constant zero map in $(P_S)^*$ is exactly  the central foliation $V_{\langle S\rangle}$. By Lemma \ref{centralproper}, $\Lambda^S$ contains equivalence classes other than $[0]$.

\begin{proposition}\label{concentration}If $A_{\epsilon_0}$ contains a $V_{\langle S\rangle}$-pattern for some $\epsilon_0$, then $\exists[\lambda]\in\Lambda^S\backslash\{[0]\}$ such that $\forall \epsilon>\epsilon_0$, there is a sequence $\{y_k\}_{k=1}^\infty\subset A_\epsilon$ converging to a $V_{\langle S\rangle}$-translated torsion point $y$ such that $y_k-y\in (V_{\langle S\rangle}\oplus V_{[\lambda]})\backslash V_{\langle S\rangle}$ for all $\forall k$.\end{proposition}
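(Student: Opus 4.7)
The plan is as follows. After passing to a subsequence I assume that for every $[\lambda]\in\Lambda^S$ the component $v_k^{[\lambda]}$ of $v_k=z_k-z$ in $V_{[\lambda]}$ is either identically zero or nonzero for every $k$; let $\Lambda_+\subset\Lambda^S\setminus\{[0]\}$ be the (non-empty) collection of classes on which it is nonzero, non-emptiness being exactly the hypothesis $v_k\notin V_{\langle S\rangle}$. Among the restricted functionals $\lambda_j|_{P_S}$ for $[\lambda_j]\in\Lambda_+$, I pick $[\lambda_{i_0}]$ to be an extreme ray of the polyhedral cone they span in $(P_S)^*$. A standard linear separation argument then supplies $\eta\in P_S$ with $\lambda_{i_0}(\eta)>0$ and $\lambda_j(\eta)\leq 0$ for every other $[\lambda_j]\in\Lambda_+$; automatically $\lambda_i(\eta)=0$ for every $i\in\langle S\rangle$ because $L_S=P_S^\perp$. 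This $[\lambda_{i_0}]$ will be the class produced by the proposition.

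The main construction applies Lemma \ref{nongroupapprox} at many scales. With $\epsilon'=\epsilon-\epsilon_0>0$ and $C=C(\epsilon',H)$ the constant supplied by that lemma, for each pair $(k,m)\in\bN\times\bN$ I would choose $t_k^{(m)}>0$ and $\bfn_k^{(m)}\in H_{\epsilon',S}$ with $|\bfn_k^{(m)}-t_k^{(m)}\eta|<C$, normalized so that $\|\zeta^{\bfn_k^{(m)}}v_k^{[\lambda_{i_0}]}\|$ lies in a band of order $1/m$ bounded away from $0$. Such a choice is possible because $|\zeta_i^{\bfn_k^{(m)}}|=e^{\lambda_i(\bfn_k^{(m)})}=e^{t_k^{(m)}\lambda_i(\eta)+O(1)}$ and the norm in question varies continuously and monotonically in $t_k^{(m)}$; since $v_k^{[\lambda_{i_0}]}\to 0$ one has $t_k^{(m)}\to\infty$ as $k\to\infty$. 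By the property $H_{\epsilon',S}.A_{\epsilon_0}\subset A_\epsilon$ coming from Lemma \ref{fginvariant}, each $\zeta^{\bfn_k^{(m)}}z_k$ lies in the closed set $A_\epsilon$.

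A diagonal limit argument then produces the desired pattern. For fixed $m$ and $k\to\infty$: the torsion part of $\zeta^{\bfn_k^{(m)}}z$ ranges over the finite orbit $G.z_*$ (writing $z=z_*+v$ with $z_*$ torsion and $v\in V_{\langle S\rangle}$) and its $V_{\langle S\rangle}$-part stays bounded because $|\lambda_i(\bfn_k^{(m)})|=O(1)$ for $i\in\langle S\rangle$; so along a subsequence $\zeta^{\bfn_k^{(m)}}z$ converges to a $V_{\langle S\rangle}$-translated torsion point $y_m$. Meanwhile $\zeta^{\bfn_k^{(m)}}v_k^{[0]}\to 0$ (bounded operator applied to a vector tending to $0$); $\zeta^{\bfn_k^{(m)}}v_k^{[\lambda_{i_0}]}\to u_m\in V_{[\lambda_{i_0}]}$ with $\|u_m\|$ of order $1/m$, in particular $u_m\neq 0$; and for each $[\lambda_j]\in\Lambda_+\setminus\{[\lambda_{i_0}]\}$ the component $\zeta^{\bfn_k^{(m)}}v_k^{[\lambda_j]}$ tends to $0$, because $\zeta^{\bfn_k^{(m)}}$ contracts $V_{[\lambda_j]}$ by a bounded factor (as $\lambda_j(\eta)\leq 0$) while $\|v_k^{[\lambda_j]}\|\to 0$. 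Thus $Z_m:=\lim_k\zeta^{\bfn_k^{(m)}}z_k=y_m+u_m\in A_\epsilon$. A further subsequence in $m$, using finiteness of the torsion orbit of $z$ together with compactness in the bounded subset of $V_{\langle S\rangle}$ that the $V_{\langle S\rangle}$-parts of the $y_m$ inhabit, arranges that $y_m\to y$ for some $V_{\langle S\rangle}$-translated torsion point $y\in A_\epsilon$. Then $Z_m\to y$ and $Z_m-y=(y_m-y)+u_m\in V_{\langle S\rangle}\oplus V_{[\lambda_{i_0}]}$; since $V_{[\lambda_{i_0}]}\cap V_{\langle S\rangle}=\{0\}$ and $u_m\neq 0$, $Z_m-y\notin V_{\langle S\rangle}$.

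The principal obstacle I anticipate is the bookkeeping of this nested diagonal extraction: inside the coarse Lyapunov subspace $V_{[\lambda_{i_0}]}$ distinct coordinates $V_i$ scale by different positive powers of $e^{t_k^{(m)}\lambda_{i_0}(\eta)}$, so one must argue by sub-subsequencing that the overall $V_{[\lambda_{i_0}]}$-norm can be normalized to order $1/m$ and that the two limits in $k$ and $m$ remain mutually coherent (in particular that the torsion orbit stabilizes and the bounded $V_{\langle S\rangle}$-parts converge). Once this is set up, the structural identity $Z_m-y\in(V_{\langle S\rangle}\oplus V_{[\lambda_{i_0}]})\setminus V_{\langle S\rangle}$ is immediate from the Lyapunov decomposition.
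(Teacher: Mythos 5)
Your overall mechanism — pick a direction $\eta\in P_S$ that isolates a single coarse Lyapunov class, approximate $t\eta$ by elements of $H_{\epsilon',S}$ via Lemma~\ref{nongroupapprox}, push the pattern, and take a nested limit — is exactly the engine of the paper's proof. However, your first step contains a genuine gap: you assert that the cone in $(P_S)^*$ generated by $\{\lambda_j|_{P_S}:[\lambda_j]\in\Lambda_+\}$ has an extreme ray $[\lambda_{i_0}]$, and then separate it from the rest. An extreme ray exists only when this cone is \emph{pointed}, and in this setting it need not be. Indeed, since $\zeta^{\mathbf n}\in U_K$, Dirichlet's unit theorem gives $\sum_{i\le r_1}\lambda_i + 2\sum_{r_1<i\le r_1+r_2}\lambda_i\equiv 0$; restricting to $P_S$ and using that $\lambda_j|_{P_S}=0$ for $j\in\langle S\rangle$, one obtains a strictly positive combination $\sum_{j\notin\langle S\rangle}c_j\,\lambda_j|_{P_S}=0$. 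Combined with the fact that the $\lambda_j|_{P_S}$, $j\notin\langle S\rangle$, span $(P_S)^*$ (because the full family $\{\lambda_j\}$ spans $(\bR^r)^*$), this forces the cone they generate to be all of $(P_S)^*$ whenever $\Lambda_+ = \Lambda^S\setminus\{[0]\}$. A cone equal to the whole space has no extreme rays, so there is no $[\lambda_{i_0}]$ and no separating $\eta$ with $\lambda_{i_0}(\eta)>0$ and $\lambda_j(\eta)\le 0$ for all other classes in $\Lambda_+$, and your construction cannot start.

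The paper avoids this by running a \emph{descent} rather than a one-shot selection. One considers all $\Lambda\subset\Lambda^S\setminus\{[0]\}$ satisfying the pattern condition ($\star$), and takes a minimal such $\Lambda$ (which exists since $\Lambda^S$ is finite). If $|\Lambda|>1$, pick two distinct classes; since their restrictions to $P_S$ are non--positively-proportional, there is $\eta\in P_S$ with one positive and the other negative — a much weaker fact than the existence of an extreme ray — and after a small perturbation one may assume $\lambda(\eta)\ne 0$ for every $[\lambda]\in\Lambda$. This splits $\Lambda=\Lambda_+\sqcup\Lambda_-$ with both parts non-empty, and the pushing argument (which you essentially reproduce) then shows that $\Lambda_+$ also satisfies ($\star$), contradicting minimality. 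Thus a minimal $\Lambda$ is a singleton. Your diagonal $(k,m)$-extraction and the use of Lemma~\ref{nongroupapprox}, the boundedness of the $V_{\langle S\rangle}$-component, and the identification $Z_m-y\in(V_{\langle S\rangle}\oplus V_{[\lambda_{i_0}]})\setminus V_{\langle S\rangle}$ are all fine and mirror the paper's limit passage (with $1/m$ playing the role of the paper's $\delta$); it is only the extraction of a distinguished class in a single step that fails without the minimality/descent scaffolding.
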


Here again $y_k-y$ is viewed as a very short vector in $\bR^{r_1}\oplus\bC^{r_2}$.

\begin{proof}Let $\Lambda$ be a minimal non-empty subset of $\Lambda^S\backslash\{[0]\}$ verifying the following condition:\\

\begin{itemize}\item[($\star$)] {\it  $\forall\epsilon>\epsilon_0$, there is a sequence of points $\{y_k\}_{n=1}^\infty$ from $A_\epsilon$ that converges to a $V_{\langle S\rangle}$-translated torsion point $y$, which may depend on $\epsilon$, such that $y_k-y\in (V_{\langle S\rangle}\oplus V_{\Lambda})\backslash V_{\langle S\rangle}$.}\\\end{itemize}

The definition of $\Lambda$ makes sense since by definition of a $V_{\langle S\rangle}$-pattern, $\Lambda^S\backslash\{[0]\}$ satisfies condition ($\star$). To establish the proposition it suffices to prove any minimal set $\Lambda$ satisfying ($\star$) consists of a single element $[\lambda]\in\Lambda^S\backslash\{[0]\}$.

Assume for contradiction that $\Lambda$ consists of more than one elements.

Observe for any two non-zero linear maps $\lambda_1,\lambda_2\in (P_S)^*$, if $\lambda_2\notin\bR_+\lambda_1$ then there is $\eta\in P_S$ such that $\lambda_1(\eta)>0$, $\lambda_2(\eta)<0$. Thus $\Lambda$ decomposes as a disjoint union of two non-empty parts $\Lambda_+$ and $\Lambda_-$, and there exists $\eta\in P_S$ such that \begin{equation}\label{Lyacut}\lambda(\eta)>0\text{(resp. }<0\text{)},\ \forall[\lambda]\in \Lambda_+\text{(resp. }\Lambda_-\text{)}.\end{equation}

Let $\epsilon'>\epsilon>\epsilon_0$, take the sequence $\{y_k\}_{k=1}^\infty\in A_\epsilon$ and its limit $y$ in assumption ($\star$) with respect to parameter $\epsilon$, such that $y_k-y\in (V_{\langle S\rangle}\oplus V_{\Lambda})\backslash V_{\langle S\rangle}$ and $\theta_k=\|y_k-y\|\rightarrow 0$. Write  $y_k$ as $y+v_k^0+v_k^++v_k^-$ where:
\begin{itemize}
\item $v_k^0$, $v_k^+$ and $v_k^-$ are respectively vectors from  $V_{\langle S\rangle}$, $V_{\Lambda_+}$ and $V_{\Lambda_-}$;
\item $|v_k^0|$, $|v_k^+|$ and $|v_k^-|$ are all bounded by $\theta_k$;
\item at least one of $v_k^+$ and $v_k^-$ is not equal to zero.
\end{itemize}

By passing to a sequence of $\epsilon\rightarrow 0$ and a subsequence of $\{y_k\}$ for each of these $\epsilon$'s, we may assume without loss of generality that for all $\epsilon>\epsilon_0$, the sequence $\{y_k\}_{k=1}^\infty$ is chosen in such a way that the corresponding $v_k^+$ does not vanish for any $k$.

For each $k$, consider the map \begin{equation}\label{lengthgrowth}b_k^+(t)=\Big(\sum_{i\in\bigcup_{[\lambda]\in\Lambda_+}I_{[\lambda]}}(e^{\lambda_i(t\eta)}|(v_k^+)_i|)^2\Big)^\frac12\end{equation} from $\bR$ to $\bR_k^+$, where $(v_k^+)_i$ is the projection of $v_k^+$ in $V_i$. If $i\in\bigcup_{[\lambda]\in\Lambda_+}I_{[\lambda]}$ then by construction of coarse Lyapunov subspaces and (\ref{Lyacut}) $\lambda_i(\eta)=\lambda_i^S(\eta)>0$, hence $t$ is strictly increasing for $v_k^+\neq 0$. Similarly the map $b_k^-(t)=\Big(\sum_{i\in\bigcup_{[\lambda]\in\Lambda_-}I_{[\lambda]}}(e^{\lambda_i(t\eta)}|(v_k^-)_i|)^2\Big)^\frac12$ is decreasing (strictly decreasing if $v_k^-\neq 0$) and $b_k^0(t)=\Big(\sum_{i\in\langle S\rangle}(e^{\lambda_i(t\eta)}|(v_k^0)_i|)^2\Big)^\frac12\equiv |v_k^0|$.

Fix now a positive number $\delta$. For sufficiently large $k$, $\theta_k<\delta$ and $b_k^+(0)=|v_k^+|\leq\theta_k<\delta$. Hence $\exists t_k>0$ such that $b_k^+(t_k)=\delta$. Take $\bfn_k\in H_{\epsilon'-\epsilon,S}$ within distance $C$ from $t_k\eta$ where $
C=C(\epsilon'-\epsilon,H)$ is the constant given by Lemma \ref{nongroupapprox}.

Consider \begin{equation}\label{localdecomp}\zeta^{\bfn_k}.y_k=\zeta^{\bfn_k}.y+\zeta^{\bfn_k}.v_k^0+\zeta^{\bfn_k}.v_k^++\zeta^{\bfn_k}.v_k^-.\end{equation} As $X$ is compact, by passing to a subsequence one may assume $\zeta^{\bfn_k}.y$ converges as $k\rightarrow\infty$.

Note the length of $\zeta^{\bfn_k}v_k^+$ is $\Big(\sum_{i\in\bigcup_{[\lambda]\in\Lambda_+}I_{[\lambda]}}(e^{\lambda_i(\bfn_k)}|(v_k^+)_i|)^2\Big)^\frac12$. Since $|\lambda_i(\bfn_k)-\lambda_i(t_k\eta)|\leq\|\lambda_i\||\bfn_k-t_k\eta|<aC$ where $a=\max_{i\in I}{\|\lambda_i\|}$, we have the relation $\frac{|\zeta^{\bfn_k}.v_k^+|}{b_k^+(t_k)}\in(e^{-aC},e^{aC})$. Similarly $\frac{|\zeta^{\bfn_k}.v_k^-|}{b_k^-(t_k)}$, $\frac{|\zeta^{\bfn_k}.v_k^0|}{b_k^0(t_k)}$ fall into the same range. Hence \begin{equation}\label{fixedsmallexpansion}|\zeta^{\bfn_k}.v_k^+|\in (e^{-aC}b_k^+(t_k),e^{aC}b_k^+(t_k))=(e^{-aC}\delta,e^{aC}\delta).\end{equation} Moreover \begin{equation}|\zeta^{\bfn_k}.v_k^-|\leq e^{aC}b_k^-(t_k)\leq e^{aC}b_k^-(0)=e^{aC}|v_k^-|\leq e^{aC}\theta_k,\end{equation} and similarly \begin{equation}|\zeta^{\bfn_k}.v_k^0|\leq e^{aC}b_k^0(t_k)=e^{aC}|v_k^0|\leq e^{aC}\theta_k.\end{equation}

Note $a,C$ are independent of $\delta$ and $\theta_k$. So since $\theta_k\rightarrow 0$, $\zeta^{\bfn_k}.v_k^-$ and $\zeta^{\bfn_k}.v_k^0$ converge to $0$ as $k\rightarrow\infty$. By Lemma \ref{torsionorbit}, the limit of $\zeta^{\bfn_k}.y$ is a $V_{\langle S\rangle}$-translated torsion point $z_\delta\in\overline{G_{\epsilon'-\epsilon,S}.y}$ which depends on $\delta$. Again as $\zeta^{\bfn_k}.v_k^+$ is in the ball of fixed radius $e^{aC}\delta$ inside $V_{\Lambda_+}$, it is all right to suppose $\lim_{k\rightarrow\infty}\zeta^{\bfn_k}.v_k^+$ exists by passing to a subsequence if necessary. Denote this limit by $w_\delta$, then by (\ref{fixedsmallexpansion}), \begin{equation}\label{reducedcLS}w_\delta\in V_{\Lambda_+},\ |w_\delta|\in [e^{-aC}\delta,e^{aC}\delta].\end{equation}
Take limit of (\ref{localdecomp}) by summing up the terms, we see that
\begin{equation}\label{convergeoutside}z_\delta+w_\delta=\lim_{k\rightarrow\infty}\zeta^{\bfn_k}.y_k\in\overline{H_{\epsilon'-\epsilon,S}.A_\epsilon}\subset A_{\epsilon'},\end{equation} where Lemma \ref{fginvariant} is used in the last step.

It was shown in Lemma \ref{torsionorbit} that $\overline{G_{\epsilon'-\epsilon,S}.y}$ is inside a finite union of compact $V_{\langle S\rangle}$-discs centered at torsion points. Take a sequence of positive numbers $\{\delta_h\}_{h=1}^\infty$ decaying to $0$ such that the $z_{\delta_h}$'s are in the same $V_{\langle S\rangle}$-disc and converge to some point $y'$. Then $y'$ belongs to the same disc and is thus a $V_{\langle S\rangle}$-translated torsion point. As the constant $aC$ is independent of $\delta$,  $w_{\delta_h}\rightarrow 0$ by (\ref{reducedcLS}). In consequence, the points $y'_h=z_{\delta_h}+w_{\delta_h}$ converge to $y$ as well. Moreover $z_{\delta_h}-y'\in V_{\langle S\rangle}$ and $w_{\delta_h}\in V_{\Lambda_+}\backslash\{0\}$, so the difference $y'_h-y$ is in $(V_{\langle S\rangle}\oplus V_{\Lambda_+})\backslash V_{\langle S\rangle}$.

By (\ref{convergeoutside}), $y'_h\in A_{\epsilon'}$. So we have proved for any $\epsilon'>\epsilon_0$, $A_{\epsilon'}$ contains a sequence $\{y'_h\}_{h=1}^\infty$ and a $V_{\langle S\rangle}$-translated point $y'$ satisfying condition ($\star$) with respect to the non-empty subset $\Lambda_+\subsetneq\Lambda$; a contradiction to the minimality of $\Lambda$. This completes the proof.
\end{proof}

The proposition will be used later in the form of the next corollary.

\begin{corollary}\label{longcLsvector}If $A_{\epsilon_0}$ contains a $V_{\langle S\rangle}$-pattern for some $\epsilon_0$, then there exists $[\lambda]\in\Lambda^S\backslash\{[0]\}$ such that $\forall \epsilon>\epsilon_0$, there is a constant $C=C(\epsilon_0,\epsilon,H,A)$ such that for all $R>0$, $A_\epsilon$ contains a point of the form $y_*+v_{\langle S\rangle}+v$, where $y_*$ is a torsion point, $v_{\langle S\rangle}\in V_{\langle S\rangle}$, $v\in V_{[\lambda]}$ and $|v_{\langle S\rangle}|\leq C$, $v\in[R,CR]$.\end{corollary}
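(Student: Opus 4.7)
\textbf{Proof plan for Corollary \ref{longcLsvector}.}

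The plan is to apply Proposition \ref{concentration} and then use a well-chosen element of $H_{\epsilon-\epsilon'',S}$ (for some intermediate $\epsilon_0<\epsilon''<\epsilon$) to amplify the non-central part of the pattern from an arbitrarily small length up to length $\sim R$, all while keeping the $V_{\langle S\rangle}$-component bounded independently of $R$. First, I fix such an $\epsilon''$ and invoke Proposition \ref{concentration} to obtain the class $[\lambda]\in\Lambda^S\setminus\{[0]\}$ together with a sequence $\{y_k\}\subset A_{\epsilon''}$ converging to a $V_{\langle S\rangle}$-translated torsion point $y=y_*+w$ (with $y_*$ torsion and $w\in V_{\langle S\rangle}$), whose differences decompose as $y_k-y=v_k^0+v_k$ with $v_k^0\in V_{\langle S\rangle}$, $v_k\in V_{[\lambda]}\setminus\{0\}$, and $|v_k^0|,|v_k|\to 0$.

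Next, I pick $\eta\in P_S$ with $\lambda(\eta)>0$, which exists because $[\lambda]\neq[0]$ means $\lambda$ does not vanish identically on $P_S$. Since every $i\in I_{[\lambda]}$ satisfies $\lambda_i^S=c_i\lambda$ with $c_i>0$, the map $t\mapsto|\zeta^{t\eta}.v_k|$ is continuous, strictly increasing on $\bR$ with image $(0,\infty)$, and in particular attains every prescribed value. Let $C_0=C(\epsilon-\epsilon'',H)$ be the constant furnished by Lemma \ref{nongroupapprox} and set $a=\max_{i\in I}\|\lambda_i\|$. Given $R>0$, I pick $t_k\in\bR$ so that $|\zeta^{t_k\eta}.v_k|=Re^{aC_0}$, and then use Lemma \ref{nongroupapprox} to produce $\bfn_k\in H_{\epsilon-\epsilon'',S}$ with $|\bfn_k-t_k\eta|\leq C_0$. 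Since $|\lambda_i(\bfn_k-t_k\eta)|\leq aC_0$ for every $i\in I_{[\lambda]}$, the $V_{[\lambda]}$-component $\zeta^{\bfn_k}.v_k$ has length in $[R,Re^{2aC_0}]$, which is the required range.

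The main technical point, and the principal obstacle, is to ensure the $V_{\langle S\rangle}$-component remains bounded uniformly in $R$. Decomposing
$\zeta^{\bfn_k}.y_k=\zeta^{\bfn_k}.y_*+\zeta^{\bfn_k}.(w+v_k^0)+\zeta^{\bfn_k}.v_k$,
the first summand is a torsion point (it lies in the finite set $G.y_*$) and the last has the desired length. For the middle summand, the key input is the very definition of $\langle S\rangle$: every $\lambda_j$ with $j\in\langle S\rangle$ lies in $L_S$, so combined with $|\lambda_i(\bfn_k)|<\epsilon-\epsilon''$ for $i\in S$, estimate (\ref{lambdaspan}) yields $|\lambda_j(\bfn_k)|\leq c(\epsilon-\epsilon'')$ for some fixed $c$ and all $j\in\langle S\rangle$. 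Hence $|\zeta^{\bfn_k}.(w+v_k^0)|\leq e^{c(\epsilon-\epsilon'')}(|w|+|v_k^0|)$, and choosing $k$ large enough that $|v_k^0|\leq 1$ makes this at most $e^{c(\epsilon-\epsilon'')}(|w|+1)$, independent of $R$. Finally, Lemma \ref{fginvariant}.(ii) gives $\zeta^{\bfn_k}.y_k\in H_{\epsilon-\epsilon'',S}.A_{\epsilon''}\subset A_\epsilon$, so setting $C=\max\bigl(e^{c(\epsilon-\epsilon'')}(|w|+1),e^{2aC_0}\bigr)$ completes the construction. Everything else is routine Lyapunov-expansion bookkeeping; the only non-obvious ingredient is the uniform boundedness of the central component, which is precisely what the condition $j\in\langle S\rangle\Rightarrow\lambda_j\in L_S$ is designed to deliver.
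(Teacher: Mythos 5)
Your proof is correct and follows essentially the same route as the paper's: both extract from Proposition \ref{concentration} a point of the form $y_*+(\text{bounded }V_{\langle S\rangle}\text{-part})+(\text{small nonzero }V_{[\lambda]}\text{-part})$ at an intermediate parameter, then use Lemma \ref{nongroupapprox} to approximate a real multiple of some $\eta\in P_S$ with $\lambda(\eta)>0$ by an element of $H_{\epsilon-\epsilon'',S}$ that expands the $V_{[\lambda]}$-component to length $\sim R$ while keeping the $V_{\langle S\rangle}$-component bounded via (\ref{lambdaspan})/(\ref{centraldeform}). The only minor difference is that the paper fixes $\epsilon''=\tfrac{\epsilon_0+\epsilon}{2}$ and phrases the output of Proposition \ref{concentration} directly as a single point with $|v'_{\langle S\rangle}|\leq C_1$, whereas you carry the sequence and take $k$ large; the substance is identical.
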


\begin{proof}By proposition, there is a class $[\lambda]\in\Lambda^S\backslash\{[0]\}$ such that we may pick from $A_{\frac{\epsilon_0+\epsilon}2}$ a point of the form $y_*'+v'_{\langle S\rangle}+v'$ where $y_*'$ is a torsion point, $v'_{\langle S\rangle}\in V_{\langle S\rangle}$, $v'\in V_{[\lambda]}$ with $|v'_{\langle S\rangle}|\leq C_1(\epsilon_0,\epsilon,H,A)$ and $0<|v'|\leq R$ where $C_1$ is a constant independent of $R$.

Since $\lambda\neq 0$, $\exists\eta\in P_S$ such that $\lambda(\eta)>0$. Similar to (\ref{lengthgrowth}), define \begin{equation}b(t)=\Big(\sum_{i\in\bigcup_{[\lambda]\in\Lambda'_+}I_{[\lambda]}}(e^{\lambda_i(t\eta)}|(v')_i|)^2\Big)^\frac12\end{equation} Then $b(0)=|v|$. And since each $\lambda_i$ involved is positively proportional to $\lambda$ on $P_S$, $\lambda_i(t\eta)>0$; so $b(t)$ is strictly increasing. Fix $t$ such that $b(t)=e^{aC_2}R$, where $a=\max_{i\in I}\|\lambda_i\|$ and $C_2=C_2(\frac{\epsilon-\epsilon_0}2,H)$ is the constant defined in Lemma \ref{nongroupapprox}, according to which  $\exists\bfn\in H_{\frac{\epsilon-\epsilon_0}2,S}\subset\bZ^r$ within distance $C_2$ from $\eta$.

Consider the point $\zeta^\bfn.y_*'+\zeta^\bfn.v'_{\langle S\rangle}+\zeta^\bfn.v'=\zeta^\bfn.(y_*'+v'_{\langle S\rangle}+v')\in H_{\frac{\epsilon-\epsilon_0}2,S}.A_{\frac{\epsilon_0+\epsilon}2}\subset A_\epsilon$. Obviously $\zeta^\bfn.y_*'$ is of torsion, $\zeta^\bfn.v'_{\langle S\rangle}\in V_{\langle S\rangle}$ and $\zeta^\bfn.v'\in V_{[\lambda]}$.

For any $j\in \langle S\rangle$, by inequality (\ref{centraldeform}) in the proof of Lemma \ref{torsionorbit}, $|\zeta_j^\bfn|\leq e^{\frac{c(\epsilon-\epsilon_0)}2}$ where $c$ is a constant depending only on the $\lambda_i$'s. Hence $|\zeta^\bfn.v'_{\langle S\rangle}|\leq e^{\frac{c(\epsilon-\epsilon_0)}2}C_1$.

Moreover $|\zeta^\bfn.v'|=\big(\sum_{i\in\bigcup_{[\lambda]\in\Lambda'_+}I_{[\lambda]}}(e^{\lambda_i(\bfn)}|(v')_i|)^2\big)^\frac12$, so similar to (\ref{fixedsmallexpansion}) we have $\frac{|\zeta^\bfn.v'|}{b(t)}\in (e^{-\max_{i\in I}|\lambda_i(\bfn-t\eta)|},e^{\max_{i\in I}|\lambda_i(\bfn-t\eta)|})\subset[e^{-aC_2},e^{aC_2}]$. Thus since $b(t)=e^{aC_2}R$, $|\zeta^\bfn.v'|\in [R,e^{2aC_2}R]$.

The lemma follows by setting $C=\max(e^{\frac{c(\epsilon-\epsilon_0)}2}C_1,e^{2aC_2})$.
\end{proof}

\subsection{Existence of arbitrarily long line segments}

We aim to prove the following:

\begin{proposition}\label{line}If $A_{\epsilon_0}$ contains a $V_{\langle S\rangle}$-pattern then there is a coarse Lyapunov subspace $V_{[\lambda]}$, where $[\lambda]\in\Lambda^S\backslash\{[0]\}$, satisfying that for all $\epsilon>\epsilon_0$, there exist a real line $L\subset V_{[\lambda]}$ that passes through the origin and a point $y\in X$, such that $A_\epsilon$ contains $y+L=\{y+v:v\in L\}\subset X$.\end{proposition}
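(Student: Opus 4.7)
The plan is to promote the arbitrarily long $V_{[\lambda]}$-vectors supplied by Corollary \ref{longcLsvector} into a full affine line inside $A_\epsilon$, using contraction along $P_S$ together with compactness arguments. Throughout, fix $\epsilon>\epsilon_0$ and set $\epsilon_1=\tfrac{1}{2}(\epsilon_0+\epsilon)$ and $\delta=\epsilon-\epsilon_1>0$. Applying Corollary~\ref{longcLsvector} at level $\epsilon_1$ furnishes the class $[\lambda]\in\Lambda^S\setminus\{[0]\}$ and a constant $C_0$ such that, for every $R>0$, $A_{\epsilon_1}$ contains a point $p(R)=y_*(R)+w(R)+v(R)$ with $y_*(R)$ torsion, $w(R)\in V_{\langle S\rangle}$ of norm $\leq C_0$, and $v(R)\in V_{[\lambda]}$ of norm in $[R,C_0R]$.

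Since $[\lambda]\neq[0]$, the linear functional $\lambda\in(P_S)^*$ is nonzero, so we may fix $\eta\in P_S$ with $\lambda(\eta)<0$. By Lemma~\ref{nongroupapprox}, for each $T>0$ there is $\bfn(T)\in H_{\delta,S}$ with $|\bfn(T)-T\eta|\leq C_1$ for a constant $C_1$ independent of $T$. For every $i\in I_{[\lambda]}$ one has $\lambda_i|_{P_S}=c_i\lambda$ with $c_i>0$, so $\zeta^{\bfn(T)}$ contracts each $V_i$ with $i\in I_{[\lambda]}$ by a factor approximately $e^{c_iT\lambda(\eta)}\to0$, while acting as an approximate isometry on $V_{\langle S\rangle}$ (where all Lyapunov functionals vanish on $P_S$) and preserving torsion. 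By Lemma~\ref{fginvariant}.(ii), $\zeta^{\bfn(T)}.p(R)\in A_\epsilon$ for every $R,T>0$.

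Now iterate this to produce points on a line. Pick a sequence $s_k\downarrow0$. For each $k$, choose $R_{k,n}\to\infty$ and tune $T_{k,n}$ so that $|\zeta^{\bfn(T_{k,n})}.v(R_{k,n})|\to s_k$; this is possible because the contraction factors are positive constants times $T$. By compactness of $X$ and of the (bounded) unit sphere of $V_{[\lambda]}$, extract along $n$ a convergent subsequence $\zeta^{\bfn(T_{k,n})}.p(R_{k,n})\to q_k=y_k+u_k\in A_\epsilon$, where $y_k\in X$ collects the torsion plus bounded $V_{\langle S\rangle}$ part and $u_k\in V_{[\lambda]}$ has $|u_k|=s_k$. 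Another diagonal compactness pass along $k$ gives $y_k\to y\in A_\epsilon$ and $u_k/|u_k|\to u\in V_{[\lambda]}$ with $|u|=1$; by closedness of $A_\epsilon$ one first gets $y\in A_\epsilon$, and the points $q_k$ show that $y+su\in A_\epsilon$ for a dense set of small $s\geq0$. Closedness of $A_\epsilon$ promotes this to an entire short segment $y+[-a,a]u\subset A_\epsilon$ for some $a>0$. Running the same construction with larger target scales (or iterating the contraction in reverse via elements approximating $T\eta$ with $\lambda(\eta)>0$, still using Lemma~\ref{fginvariant}.(ii)) extends the segment to arbitrary length, yielding $y+\bR u\subset A_\epsilon$.

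The main obstacle is coordinating the direction of the contracted vectors: since the coordinates $\zeta_i^{\bfn(T)}$ inside a single coarse Lyapunov space may contract at distinct (though comparable) rates and rotate by distinct phases, a priori the images $\zeta^{\bfn(T)}.v(R)$ could wander in $V_{[\lambda]}$. What saves the argument is that the rates differ only by a bounded multiplicative factor and the rotation phases are controlled by $\|\beta_j(\bfn(T))\|<\delta$ for $j\in I$; consequently, the image directions remain in a bounded subset of the unit sphere of $V_{[\lambda]}$, and the two-stage compactness/diagonal extraction above is what extracts a single limiting direction $u$ along which a full line is realized.
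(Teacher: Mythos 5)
Your central step does not actually produce a line segment, and this is not a repairable cosmetic issue but a structural gap; the construction in the paper is genuinely different at exactly this point.

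Look at what your two-stage compactness extraction yields: a point $y$, a unit vector $u\in V_{[\lambda]}$, and a sequence $q_k=y_k+u_k\in A_\epsilon$ with $y_k\to y$, $|u_k|=s_k\downarrow 0$, and $u_k/s_k\to u$. Since $y_k\to y$ and $u_k\to 0$, the whole sequence $q_k$ converges to the single point $y$. This proves $y\in A_\epsilon$, nothing more. In particular, the assertion ``the points $q_k$ show that $y+su\in A_\epsilon$ for a dense set of small $s\geq 0$'' does not follow: a sequence of $s_k$ decreasing to $0$ is not dense in any interval, and, more seriously, $q_k$ is close to $y+s_ku$ only if $|y_k-y|=o(s_k)$ and $|u_k/s_k-u|\to 0$ uniformly --- but the convergence rate of $y_k\to y$ is completely uncontrolled relative to $s_k$, since the $y_k$ are obtained from a compactness extraction in a continuum (a finite union of $V_{\langle S\rangle}$-discs; only when $\langle S\rangle=\emptyset$ would they be confined to finitely many torsion points). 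The concluding ``run the same construction with larger target scales'' is a further leap with no argument behind it.

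The deeper conceptual difference is this. You contract a long $V_{[\lambda]}$-vector by elements $\bfn(T)$ approximating $T\eta$ with $\lambda(\eta)<0$; but as $T$ varies, the curve $T\mapsto \zeta^{\bfn(T)}.v$ is (approximately) an exponential spiral $(e^{c_i T\lambda(\eta)}e^{\imag\phi_i(T)}v_i)_{i\in I_{[\lambda]}}$, not a line, and a one-parameter family of such images does not trace out a segment. The paper avoids this by never contracting at all. It fixes a \emph{single} point $y=y_*+v_{\langle S\rangle}+v$ with $|v|$ chosen carefully of order $R^2/\delta$ (Corollary~\ref{longcLsvector} is applied for this specific size, not just for ``arbitrarily long'' $v$), and then deforms $y$ by elements $\bfn$ with $\cL(\bfn)$ near $\rho\dot\lambda$ for $\rho$ ranging over a \emph{small} interval of length $\sim\delta/R$. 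Here $\dot\lambda$ comes from the identity component $\cL^0$ of $\overline{\cL(H')}$ and defines the slope $w=\sum_{j\in I_{[\lambda]}}(\dot\lambda_j+\imag\dot\beta_j)v_j$. The Taylor expansion $e^{\rho(\dot\lambda_j+\imag\dot\beta_j)}=1+\rho(\dot\lambda_j+\imag\dot\beta_j)+O(\rho^2)$ turns the (a priori curved) orbit of $y$ into a segment of length $\sim\rho|v|\sim R$ with error $O(\rho^2|v|)\sim\delta$. This explicit balance $|v|\sim R^2/\delta$, $\rho\sim\delta/R$ is precisely what manufactures a $\delta$-net of a long segment (Lemma~\ref{netlongline}) with a \emph{single} basepoint and \emph{single} slope before any limit is taken; the final compactness pass from Lemma~\ref{netlongline} to Proposition~\ref{line} then has something to converge to. Your proposal omits both the $\cL$-machinery and the $\delta$-net stage, and the compactness argument alone cannot substitute for them.
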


To begin with, we remark that to establish the proposition, it suffices to show the next lemma.

\begin{definition}In a metric space, a subset $\Omega'$ is a {\bf $\delta$-net} of another subset $\Omega$ if for all $x\in \Omega$, there is $x'\in\Omega'$ within distance $\delta$ from $x$.\end{definition}

\begin{lemma}\label{netlongline}If $A_{\epsilon_0}$ contains a $V_{\langle S\rangle}$-pattern then there is a coarse Lyapunov subspace $V_{[\lambda]}$, where $[\lambda]\in\Lambda^S\backslash\{[0]\}$, satisfying: $\forall\epsilon>\epsilon_0$ and $\forall\delta,R>0$, there is a line segment $E\subset V_{[\lambda]}$ of length $R$ through the origin and a point $y\in X$ such that $A_\epsilon$ contains a $\delta$-net of $y+E=\{y+v:v\in E\}$.\end{lemma}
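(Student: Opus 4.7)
The plan is to use Corollary \ref{longcLsvector} together with the approximation $H_{\epsilon_2,S}\approx P_S$ from Lemma \ref{nongroupapprox} to produce a large family of points in $A_\epsilon$ whose $V_{[\lambda]}$-components realize many length scales, then to extract the desired $\delta$-dense line segment by a triple pigeonhole. Fix $\epsilon>\epsilon_0$ and $\delta,R>0$; pick $\epsilon_1\in(\epsilon_0,\epsilon)$ and set $\epsilon_2=\epsilon-\epsilon_1$. By Corollary \ref{longcLsvector} there exist a class $[\lambda]\in\Lambda^S\setminus\{[0]\}$ and a constant $C_1>0$ such that, for every $r>0$, $A_{\epsilon_1}$ contains a point $p(r)=y_*(r)+v_{\langle S\rangle}(r)+v(r)$ with $|v_{\langle S\rangle}(r)|\leq C_1$, $v(r)\in V_{[\lambda]}$ and $|v(r)|\in[r,C_1 r]$; inspecting the proof of that corollary, $y_*(r)$ is $\zeta^{\bfn}$ applied to the torsion part of the fixed $V_{\langle S\rangle}$-translated torsion point produced by Proposition \ref{concentration}, hence $y_*(r)$ ranges over the finite $G$-orbit of a single torsion point of bounded order.

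Since $\dim P_S\geq 2$ and $\lambda|_{P_S}\neq 0$, we may choose $\eta_0\in P_S$ with $\lambda(\eta_0)>0$ arbitrarily small by taking $\eta_0=\eta^*+\sigma\eta^+$ where $\eta^*\in\ker\lambda|_{P_S}\setminus\{0\}$, $\lambda(\eta^+)=1$ and $\sigma>0$ is small. Fix a small $r>0$, let $0=t_1<t_2<\cdots<t_N=T_{\max}$ be a uniform grid of $[0,T_{\max}]$ chosen so that $e^{T_{\max}\lambda(\eta_0)}|v(r)|\geq R$, and for each $k$ use Lemma \ref{nongroupapprox} to pick $\bfn_k\in H_{\epsilon_2,S}$ within bounded distance $C(\epsilon_2,H)$ of $t_k\eta_0$. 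Then $q_k:=\zeta^{\bfn_k}.p(r)\in A_\epsilon$ by Lemma \ref{fginvariant}(ii). Decomposing $q_k=\zeta^{\bfn_k}.y_*(r)+\zeta^{\bfn_k}.v_{\langle S\rangle}(r)+\zeta^{\bfn_k}.v(r)$, the torsion part stays in a finite set, the $V_{\langle S\rangle}$-part remains in a bounded subset of $V_{\langle S\rangle}$ (using (\ref{lambdaspan}) to bound $|\lambda_j(\bfn_k)|$ for $j\in\langle S\rangle$), and the $V_{[\lambda]}$-part has length approximately $e^{t_k\lambda(\eta_0)}|v(r)|$. Now apply pigeonhole three times: on the torsion component (a finite set), on the $V_{\langle S\rangle}$-component (covered by a $\delta$-net of its ambient ball), and on the direction of the $V_{[\lambda]}$-component on the unit sphere of $V_{[\lambda]}$ (covered by a $\delta/(2R)$-net). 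For $N$ large compared with the product of these three net cardinalities, we extract a subfamily $K'\subset\{1,\ldots,N\}$ along which all three data are essentially constant, so $\{q_k:k\in K'\}$ lies within $\delta$ of a set $y+F$, where $y\in X$ is the common torsion-plus-central point and $F$ is a subset of a common line in $V_{[\lambda]}$.

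To finish, one chooses the grid $\{t_k\}$ fine enough so that the consecutive full-family lengths $e^{t_k\lambda(\eta_0)}|v(r)|$ differ by at most a small $\delta'$; the smallness of $\lambda(\eta_0)$ (free by $\dim P_S\geq 2$) makes this easy. With $N$ taken correspondingly large, the pigeonhole subfamily $K'$ still leaves lengths that are $\delta$-dense in $[|v(r)|,R]$, and choosing $r<\delta$ then gives the required $\delta$-net of $y+E$ with $E\subset V_{[\lambda]}$ a line segment of length $\geq R$ through the origin in the common direction. The main obstacle is the direction control for the $V_{[\lambda]}$-component: when $|I_{[\lambda]}|>1$, the summands $V_i\subset V_{[\lambda]}$ scale at distinct exponential rates under $\zeta^{\bfn_k}$ (writing $\lambda_i^S=c_i\lambda$ on $P_S$ with $c_i>0$, the $c_i$ are generally non-identical), so the trajectory $\{\zeta^{\bfn_k}.v(r)\}_k$ is a curve, not a straight line, in $V_{[\lambda]}$. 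The direction pigeonhole resolves this but imposes a quantitative requirement that $N$ grow polynomially with $R/\delta$ and with the net sizes in $V_{\langle S\rangle}$ and on the sphere of $V_{[\lambda]}$; the freedom to shrink $\lambda(\eta_0)$ (so that the grid can be made arbitrarily fine while covering $[0,R]$) is what makes $N$ large enough while preserving length density after pigeonhole.
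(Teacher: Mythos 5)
Your opening moves coincide with the paper's (Corollary \ref{longcLsvector} to produce $p(r)=y_*+v_{\langle S\rangle}+v$ with $v\in V_{[\lambda]}$ of controlled size, then Lemma \ref{nongroupapprox} to approximate points of $P_S$ by elements of $H_{\epsilon_2,S}$), but the pigeonhole mechanism you use to extract the segment has a genuine gap. After fixing, up to small errors, the torsion part, the $V_{\langle S\rangle}$-part and the $V_{[\lambda]}$-direction, pigeonhole yields a nonempty subfamily $K'\subset\{1,\cdots,N\}$ with $|K'|\geq N/M$; but this controls only the \emph{cardinality} of $K'$, not the distribution of the selected $t_k$'s in $[0,T_{\max}]$. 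The surviving indices can be $t$-consecutive, in which case the lengths $e^{t_k\lambda(\eta_0)}|v(r)|$, $k\in K'$, fill only a short geometric subinterval of $[|v(r)|,R]$ and are nowhere near $\delta$-dense there. Shrinking $\lambda(\eta_0)$ does not repair this: to reach length $R$ you must keep $T_{\max}\lambda(\eta_0)\approx\log(R/|v(r)|)$ of a fixed size, so the cumulative direction drift along the trajectory (of order $T_{\max}(c_i-c_j)\lambda(\eta_0)$ when $\lambda_i^S=c_i\lambda$ on $P_S$ with distinct $c_i$'s) is not made small, and the arguments $\beta_j(\bfn_k)$, pinned only to a window of size $\epsilon_2$ by membership in $H_{\epsilon_2,S}$, wander unpredictably with $k$. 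Nothing in the argument converts ``many points'' into ``evenly spaced points,'' and that conversion is exactly what the conclusion requires.

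The paper avoids the problem entirely by linearizing rather than pigeonholing. It takes $|v|$ on the much larger scale of order $R^2/\delta$ (see (\ref{choicecLslength})) and, using the rank assumption, produces a one-parameter line $\bR\dot\lambda$ inside $\overline{\cL(H')}$ where $\cL(\bfn)=\big(\lambda_{i_1}(\bfn),\cdots,\lambda_{i_{r_S}}(\bfn),\lambda_{i_0}(\bfn)\big)\oplus\big(\beta_j(\bfn)\big)_{j\in I}$; one then chooses $\bfn$ so that $\cL(\bfn)\approx\rho\dot\lambda$ with $\rho$ confined to an interval of size of order $\delta/R$. On that short interval the first-order expansion $\zeta_j^\bfn\approx 1+\rho(\dot\lambda_j+\imag\dot\beta_j)$ holds with quadratic error of order $(\delta/R)^2$, and since this error is multiplied by $|v|$ of order $R^2/\delta$, its contribution is only of order $\delta$. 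The image of $y$ therefore $\delta$-approximates the genuine segment $\{y+\rho w\}$, $w=\sum_{j\in I_{[\lambda]}}(\dot\lambda_j+\imag\dot\beta_j)v_j$, and this segment has length $\geq R$ because the dominant coordinate $v_{i_0}$ of $v$ has size of order $R^2/\delta$ and $|\dot\lambda_{i_0}+\imag\dot\beta_{i_0}|=1$. The key trade-off you are missing is precisely this: inflate $|v|$ so the required $\rho$-range shrinks, making the orbit curve --- which, as you correctly observe, is genuinely a curve in $V_{[\lambda]}$ when $|I_{[\lambda]}|>1$ --- straight to accuracy $\delta$ on the scale needed, so that no direction pigeonhole and no density claim about a discrete subfamily are required.
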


\begin{proof}[Proof of the implication Lemma \ref{netlongline} $\Rightarrow$ Proposition \ref{line}] Fix $\epsilon>\epsilon_0$ and any increasing sequence of positive numbers $R_k\rightarrow\infty$, by Lemma \ref{netlongline} there is a sequence of pairs $\{(y_k, w_k)\}_{k=1}^\infty$ where $y_k\in X$ and $w_k\in\bS V_{[\lambda]}:=\{w\in V_{[\lambda]}: |w|=1\}$ such that $\forall\rho\in[-R_k,R_k]$, there is a point $x\in A_\epsilon$ within distance $\frac1{R_k}$ from $y_k+\rho w_k$.

Passing to a subsequence if necessary, one may assume $y_k\rightarrow y\in X$ and $w_k\rightarrow w\in \bS V_{[\lambda]}$ since $X$ and $\bS V_{[\lambda]}$ are both compact. Let $L=\{\rho w:\rho\in\bR\}$.

For all $\rho>0$ and $\delta>0$, pick a sufficiently large $k$ so that $R_k>\max(|\rho|,\frac 3\delta)$, $\|y_k-y\|<\frac\delta3$ and $|w_k-w|<\frac\delta{3|\rho|}$. There is a point $x\in A_\epsilon$ within distance $\frac1{R_k}$ from $y_k+\rho w_k$, then
\begin{equation}\begin{split}&\|x-(y+\rho w)\|\\
\leq&\|x-(y_k+\rho w_k)\|+\|(y_k+\rho w_k)-(y+\rho w)\|\\
\leq &\frac1{R_k}+\|y-y_k\|+\rho|w_k-w|\leq\frac\delta3+\frac\delta3+|\rho|\cdot\frac\delta{3|\rho|}\\
=&\delta.\end{split}\end{equation}

Because $A_\epsilon$ is closed, by letting $\delta$ approach $0$ we see $y+\rho w\in A_\epsilon$ for all $\rho\in\bR$. This proves Proposition \ref{line}.\end{proof}

Before proving Lemma \ref{netlongline}, we explore some consequences to the rank assumption in Theorem \ref{denseorcentral}.

Take the subspace $L_S\subset(\bR^r)^*$ spanned by $\{\lambda_i: i\in S\}$, whose dimension we denote by $r_S$. Then $r_S\leq r-2$ by assumption (1) in Theorem \ref{denseorcentral}. Choose $i_1,\cdots,i_{r_S}\in S$ such that $\lambda_{i_1},\cdots,\lambda_{i_{r_S}}$ span $L_S$.

Let $V_{[\lambda]}$ be the coarse Lyapunov subspace in Proposition \ref{concentration} and Corollary \ref{longcLsvector}. Notice though $\langle S\rangle$ may be empty, $I_{[\lambda]}$ is not. Fix an arbitrary index $i_0\in I_{[\lambda]}$.

\begin{lemma}\label{jointspan}For an arbitrarily fixed $i_0\in I_{[\lambda]}$, $\lambda_{i_1},\cdots,\lambda_{i_{r_S}}$ and $\lambda_{i_0}$ are linearly independent. Moreover, for any $i\in \langle S\rangle\cup I_{[\lambda]}$, the Lyapunov functional $\lambda_i$ is a linear combination of $\lambda_{i_1},\cdots,\lambda_{i_{r_S}}$ and $\lambda_{i_0}$.\end{lemma}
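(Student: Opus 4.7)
The plan is to derive both statements from the orthogonality relations $L_S=P_S^\bot$ and $P_S=L_S^\bot$, combined with equivalence (\ref{vanishSgene}) and the definitions of $I_{[\lambda]}$ and $\langle S\rangle$. I would first recall that $\lambda_{i_1},\cdots,\lambda_{i_{r_S}}$ were chosen so as to form a basis of $L_S$, so a functional in $(\bR^r)^*$ lies in $L_S$ if and only if it is an $\bR$-linear combination of these $r_S$ maps.

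For the linear independence claim I would argue by contradiction: suppose $\lambda_{i_0}$ lay in the $\bR$-span of $\lambda_{i_1},\cdots,\lambda_{i_{r_S}}$. Then $\lambda_{i_0}\in L_S$, and by (\ref{vanishSgene}) this forces $\lambda_{i_0}^S\equiv 0$, that is, $i_0\in\langle S\rangle=I_{[0]}$. But the classes $I_{[0]}$ and $I_{[\lambda]}$ are disjoint since $[\lambda]\neq[0]\in\Lambda^S$, contradicting $i_0\in I_{[\lambda]}$.

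For the spanning claim I would split into two cases according to whether $i\in\langle S\rangle$ or $i\in I_{[\lambda]}$. In the first case $\lambda_i\in L_S$ directly, so $\lambda_i$ is already an $\bR$-combination of $\lambda_{i_1},\cdots,\lambda_{i_{r_S}}$ alone. In the second case both $\lambda_i^S$ and $\lambda_{i_0}^S$ are strictly positive multiples of the representative $\lambda$ by the definition of the equivalence class $[\lambda]$, so there exists $c>0$ with $\lambda_i^S=c\,\lambda_{i_0}^S$. Then $(\lambda_i-c\lambda_{i_0})|_{P_S}\equiv 0$, hence $\lambda_i-c\lambda_{i_0}\in P_S^\bot=L_S$, and expanding this element in the basis $\lambda_{i_1},\cdots,\lambda_{i_{r_S}}$ expresses $\lambda_i$ as the sought combination.

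The argument is essentially bookkeeping with bases and dual-orthogonality relations, so I do not anticipate any serious obstacle; the only care needed is in invoking (\ref{vanishSgene}) correctly, both to block $\lambda_{i_0}$ from falling into $L_S$ and to identify vanishing on $P_S$ with membership in $L_S$, and in noting that the proportionality constant in the second case is strictly positive precisely because $[\lambda]\neq[0]$.
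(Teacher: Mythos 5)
Your argument is correct and follows the paper's own proof essentially verbatim: linear independence is deduced from $i_0\notin\langle S\rangle$ via the equivalence~(\ref{vanishSgene}), the case $i\in\langle S\rangle$ is immediate from $\lambda_i\in L_S$, and for $i\in I_{[\lambda]}$ the positive proportionality $\lambda_i^S=c\,\lambda_{i_0}^S$ places $\lambda_i-c\lambda_{i_0}$ in $P_S^\bot=L_S$. The remark about $c>0$ is a harmless extra observation (any nonzero scalar would serve the spanning argument equally well), but it does not affect the correctness of the proof.
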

\begin{proof}The linear independence is clear by the choice of $\lambda_{i_1},\cdots,\lambda_{i_{r_S}}$ and the fact that $i_0\notin\langle S\rangle$. For $i\in\langle S\rangle$, the lemma follows from the construction of $\langle S\rangle$ in Theorem \ref{denseorcentral}. Suppose $i\in V_{[\lambda]}$, by definition of coarse Lyapunov subspaces, $\exists c>0$ such that $\lambda_i|_{P_S}=\lambda_i^S=c\lambda_{i_0}^S=c\lambda_{i_0}|_{P_S}$. In other words $\lambda_i-c\lambda_{i_0}$ vanishes when restricted to $P_S$, or $\lambda_i-c\lambda_{i_0}\in P_S^\bot=L_S$. So $\lambda_i-c\lambda_{i_0}$ is in the linear span of $\lambda_{i_1},\cdots,\lambda_{i_{r_S}}$, which completes the proof.\end{proof}

Consider the following group morphism from $G=\bZ^r$ to the additive group $\bR^{r_S+1}\oplus(\bR/2\pi\bZ)^{r_1+r_2}$:
\begin{equation}\label{Logmap}\cL(\bfn)=\big(\lambda_{i_1}(\bfn),\cdots,\lambda_{i_{r_S}}(\bfn),\lambda_{i_0}(\bfn)\big)\oplus\big(\beta_j(\bfn)\big)_{j\in I}\end{equation}

The first observation is $\cL$ is injective. Actually suppose $\cL(\bfn)=0$ for some $\bfn$. Then both $\lambda_{i_0}(\bfn)=\log|\zeta_{i_0}^\bfn|\in\bR$ and $\beta_{i_0}(\bfn)\in\bR/2\pi\bZ$ vanish, so $\zeta_{i_0}^\bfn=1$. However $\zeta_{i_0}^\bfn$ is an algebraic conjugate to $\zeta^\bfn$. So $\zeta^\bfn=1$. But because $\zeta$ is a group embedding from $\bZ^r$ into $U_K$ this implies $\bfn=0$, which shows the injectivity of $\cL$.

In addition, we claim $0$ is a non-isolated point in $\cL(H')\subset\bR^{r_S+1}\oplus(\bR/2\pi\bZ)^{r_1+r_2}$ for any subgroup $H'<G$ of finite index. Suppose for contradiction that there is a ball $B_{2\theta}(0)$ of small radius $2\theta$ centered at $0$ such that $B_{2\theta}(0)\cap\cL(H')=\{0\}$. Then since $\cL(H')$ is an additive group, for all $\omega\in \bR^{r_S+1}\oplus(\bR/2\pi\bZ)^{r_1+r_2}$ the ball $B_\theta(\omega)$ centered at $\omega$ contains at most one element of $\cL(H')$. Observe the compact region $B_T=[-T,T]^{r_S+1}\oplus(\bR/2\pi\bZ)^{r_1+r_2}\subset \bR^{r_S+1}\oplus(\bR/2\pi\bZ)^{r_1+r_2}$ can be covered by $O_\theta(T^{r_S+1})$ balls of radius $\theta$. However on the other hand, since all the $\lambda_i$'s are linear, the image $\cL(\bfn)$ is inside $B_T(0)$ if $T\geq a|\bfn|$ where $a=\max_{i\in I}\|\lambda_i\|$. By pigeonhole principle, there are at most $O_\theta(T^{r_S+1})$ vectors $\bfn\in H$ of length $|\bfn|\leq\frac Ta$. But this becomes false for sufficiently large $T$ since by assumption (1) in Theorem \ref{denseorcentral}, $H$ is of rank $r\geq r_S+2$. Hence we obtain a contradiction and proved that $0$ must be non-isolated.

Because $\cL(H')$ is a subgroup, so is its closure $\overline{\cL(H')}$. The identity component $\cL^0$ of the closure is a connected closed subgroup of the abelian Lie group $\bR^{r_S+1}\oplus(\bR/2\pi\bZ)^{r_1+r_2}$. It is known that such a subgroup must be isomorphic to some $\bR^{d_1}\oplus\bT^{d_2}$. By non-isolatedness, $\cL^0$ is locally isomorphic to $\bR^q$ for some $q\geq 1$.

Moreover, any neighborhood of identity in $\cL^0$ is not contained in the closed subgroup $\{\lambda_{i_0}=0,\beta_{i_0}=0\}\subset\bR^{r_S+1}\oplus(\bR/2\pi\bZ)^{r_1+r_2}$, where $\lambda_{i_0}$ and $\beta_{i_0}$ refer respectively to the two coordinates corresponding to $\lambda_{i_0}(\bfn)$ and $\beta_{i_0}(\bfn)$ under the map $\cL$. This is because otherwise by the non-isolatedness of $0$ in $\cL(H')$, there is some $\bfn\neq\bf 0$ with $\lambda_{i_0}(\bfn)=0$, $\beta_{i_0}(\bfn)=0$, which would again imply $\zeta_{i_0}^\bfn=1$. In consequence $\bfn=\mathbf 0$, contradiction.

Combining these, there must be a real vector $\dot\lambda=(\dot\lambda_{i_1},\cdots,\dot\lambda_{i_{r_S}},\dot\lambda_{i_0})\oplus(\dot\beta_j)_{j\in I}$ such that at least one of $\dot\lambda_{i_0}$ and $\dot\beta_{i_0}$ does not vanish, and $\cL^0$ contains the projection of the line $\bR\dot\lambda\subset \bR^{r_S+1}\oplus\bR^{r_1+r_2}$ into $\bR^{r_S+1}\oplus(\bR/2\pi\bZ)^{r_1+r_2}$.

Recall if $V_i\cong\bR$ then $\beta_j(\bfn)$ is either $0$ or $\pi$ from $\bR/2\pi\bZ$ and thus $\cL(\bZ^r)$ is contained in two disjoint sets $\{\beta_j=\pi\}$ and $\{\beta_j=0\}$. As the identity component of $\overline{\cL(H')}$, $\cL^0$ must be contained in $\{\beta_j=0\}$. Thus \begin{equation}\label{realarg}\dot\beta_j=0\text{ if }V_i\cong\bR.\end{equation}

It is all right to suppose \begin{equation}\label{Lognormalize}\dot\lambda_{i_0}^2+\dot\beta_{i_0}^2=1\text{ and }|\dot \lambda|\leq C,\end{equation} where $C=C(H')$ can be made to be dependent only of the group action and the subgroup $H'$ because there are only finitely many possible choices of $[\lambda]$ and $i_1,\cdots,i_{r_S},i_0$.

Now we give a proof to Lemma \ref{netlongline}.

\begin{proof}[Proof of Lemma \ref{netlongline}] Suppose $A_{\epsilon_0}$ contains a $V_{\langle S\rangle}$-pattern. Fix $\epsilon>\epsilon_0$ and $R>0$, $\delta>0$.

Without loss of generality, we may assume \begin{equation}\label{netlonglineWLOG}\delta<\epsilon-\epsilon_0<1, R\geq 1.\end{equation}

Let $[\lambda]$ be given by Corollary \ref{longcLsvector}.

By Corollary \ref{longcLsvector}, inside $A_{\frac{\epsilon_0+\epsilon}2}$ there is a point of the form $y=y_*+v_{\langle S\rangle}+v$ where $y_*$ is of torsion, $v_{\langle S\rangle}\in V_{\langle S\rangle}$ is of length no more than $C_1$, and $v\in V_{[\lambda]}$ with
\begin{equation}\label{choicecLslength}|v|\in [\frac{3d^2C_1C_2^2R^2}\delta, \frac{3d^2C_1^2C_2^2R^2}\delta].\end{equation} Here $C_1=C_1(\epsilon_0,\epsilon,H,A)\geq1$ and $C_2$ is a constant to be specified later.

Since $\langle S\rangle\cap I_{[\lambda]}=\emptyset$, without causing ambiguity, decompose \begin{equation}\label{Slambdadecomp}v_{\langle S\rangle}=\sum_{i\in\langle S\rangle}v_i;\ v=\sum_{i\in I_{[\lambda]}}v_i\end{equation} where $v_i\in V_i$.

There is an index $i_0\in I_{[\lambda]}$ such that $v_{i_0}$ has greater length than any other $v_i$ with $i\in I_{[\lambda]}$. Then \begin{equation}\label{cLsdominant}|v_{i_0}|\in[\frac {3dC_1C_2^2R^2}\delta,\frac{3d^2C_1^2C_2^2R^2}\delta],\end{equation}
as $|I_{[\lambda]}|<d$ where $d$ is the dimension of the torus we work on.

Choose $i_1,\cdots,i_{r_S}\in S$ such that $\lambda_{i_1},\cdots,\lambda_{i_{r_S}}$ generate $L_S$. We may apply Lemma \ref{jointspan} and make the construction (\ref{Logmap}). Let \begin{equation}H'=\{\bfn\in H:\zeta^\bfn.y_*=y_*\},\end{equation} which has finite index in $H$, hence in $\bZ^r$ as well.

By the discussion preceding this proof, there exists \begin{equation}\dot\lambda=(\dot\lambda_{i_1},\cdots,\dot\lambda_{i_{r_S}},\dot\lambda_{i_0})\oplus(\dot\beta_j)_{j\in I}\in\bR^{r_S+1}\oplus\bR^{r_1+r_2}\end{equation} with $\dot\lambda_{i_0}^2+\dot\beta_{i_0}^2=1$ and $|\dot\lambda|\leq C_3$, such that $\overline{\cL(H')}$ contains the projection of the line $\bR\dot\lambda$ to $\bR^{r_S+1}\oplus(\bR/2\pi\bZ)^{r_1+r_2}$. Here $C_3\geq 1$ depends only on $H'$, which is the stabilizer of $y_*$ in $H$, hence eventually on $\epsilon_0$, $\epsilon$, $H$ and $A$.

By Lemma \ref{jointspan}, there is a unique decomposition \begin{equation}\label{jointspancoeff}\lambda_j=\sum_{i\in i_1,\cdots,i_{r_S},i_0}c_{ij}\lambda_i,\ \forall j\in\langle S\rangle\cup I_{[\lambda]}\end{equation} for some constants $c_{ij}\in\bR$. W

Corresponding to (\ref{jointspancoeff}), for each $j\in\langle S\rangle\cup I_{[\lambda]}$, set \begin{equation}\label{tildejointspancoeff}\dot\lambda_j=\sum_{i=i_1,\cdots,i_{r_S},i_0}c_{ij}\dot\lambda_i, \end{equation} which is compatible with the original value of $\dot\lambda_j$ if $j\in\{i_1,\cdots,i_{r_S},i_0\}$.

Notice if $j\in \langle S\rangle\cup I_{[\lambda]}$ then, \begin{equation}|\dot\lambda_j|\leq\left(\sum_{i\in\{i_1,\cdots,i_{r_S},i_0\}}|c_{ij}|\right)\cdot\max_{i\in\{i_1,\cdots,i_{r_S},i_0\}}|\dot\lambda_i|.\end{equation} Therefore, $\forall j\in \langle S\rangle\cup I_{[\lambda]}$,
\begin{equation}\begin{split}&|\dot\lambda_j+\imag\dot\beta_j|\\
\leq & \max\left(\sum_{i\in \{i_1,\cdots,i_{r_S},i_0\}}|c_{ij}|,1\right)\left(\max_{i\in\{i_1,\cdots,i_{r_S},i_0\}}|\dot\lambda_i|^2+|\dot\beta_j|^2\right)^\frac12\\
\leq&\left(\max_{j\in\langle S\rangle\cup I_{[\lambda]}}\sum_{i\in \{i_1,\cdots,i_{r_S},i_0\}}|c_{ij}|\right)\cdot|\dot\lambda|
\end{split}\end{equation}
Here in the second step we used the fact that \begin{equation}\max_{j\in\langle S\rangle\cup I_{[\lambda]}}\sum_{i\in \{i_1,\cdots,i_{r_S},i_0\}}|c_{ij}|\geq 1,\end{equation} which is true because when $j\in\{i_0,i_1,\cdots,i_{r_S}\}$ it is easy to see $c_{ij}$ equals $1$ if $i=j$ and vanishes otherwise.

Denote \begin{equation}C_2=C_3\cdot \max_{j\in\langle S\rangle\cup I_{[\lambda]}}\sum_{i\in \{i_1,\cdots,i_{r_S},i_0\}}|c_{ij}|,\end{equation} then $C_2\geq C_3\geq 1$ and it is determined by $\epsilon_0$, $\epsilon$, $H$ and $A$. Furthermore, by the argument above, \begin{equation}|\dot\lambda_j+\imag\dot\beta_j|\leq C_2, \forall j\in \langle S\rangle\cup I_{[\lambda]}.\end{equation}

Now set \begin{equation}\label{longlineslope}w=\sum_{j\in I_{[\lambda]}}(\dot\lambda_j+\imag\cdot\dot\beta_j)v_j\in V_{[\lambda]}.\end{equation}
Notice this definition makes sense because of (\ref{realarg}). Let $E$ be the line segment $\{y+\rho w:\rho\in[-\frac\delta {6dC_1C_2^2R},\frac\delta {6dC_1C_2^2R}]\}$.

We claim that:\\

\begin{itemize}
\item[(i)] $E$ has length greater than or equal to $R$;
\item[(ii)] $A_\epsilon$ contains a $\delta$-net of $E$.\\
\end{itemize}

\noindent{\bf Proof of (i)} The length of $E$ is $\frac\delta{3dC_1C_2^2R} |w|$. But \begin{equation}|w|\geq |(\dot\lambda_{i_0}+\imag\cdot\dot\beta_{i_0})v_{i_0}|=|\dot\lambda_{i_0}+\imag\cdot\dot\beta_{i_0}|\cdot|v_{i_0}|=|v_{i_0}|.\end{equation} By (\ref{cLsdominant}), length of $E\geq \frac\delta{3dC_1C_2^2R}\cdot\frac {3dC_1C_2^2R}\delta^2=R$.

\noindent{\bf Proof of (ii)} For all $\rho\in[-\frac\delta {6dC_1C_2^2R},\frac\delta {6dC_1C_2^2R}]$, we hope to find in the subset \begin{equation}H_{\frac{\epsilon-\epsilon_0}2,S}.(y_*+v_{\langle S\rangle}+v)\subset G_{\frac{\epsilon-\epsilon_0}2,S}.A_{\frac{\epsilon_2+\epsilon}2}\subset A_\epsilon\end{equation} a point arbitrarily close to $y+\rho w$. By the construction of $\dot\lambda$, $\forall\theta>0$, we may choose $\bfn\in H'$ such that $\cL(\bfn)$ is within distance $\theta$ from the projection of $\rho\dot\lambda\in\bR^{r_S+1}\oplus\bR^{r_1+r_2}$ to $\bR^{r_S+1}\oplus(\bR/2\pi\bZ)^{r_1+r_2}$. Therefore $\forall j\in\langle S\rangle\cup I_{[\lambda]}$, $\|\beta_j(\bfn)-\rho\dot\beta_j\|\leq \theta$; furthermore it follows from (\ref{jointspancoeff}) and (\ref{tildejointspancoeff}) that $\log|\zeta_j^\bfn|=\lambda_i(\bfn)=\sum_{i\in i_1,\cdots,i_{r_S},i_0}c_{ij}\lambda_i(\bfn)$ is within distance $(\max_{j\in\langle S\rangle\cup I_{[\lambda]}}\sum_{i\in \{i_1,\cdots,i_{r_S},i_0\}}|c_{ij}|)\cdot\theta$ from $\rho\dot\lambda_j$. Thus as $\theta$ can be arbitrarily small, we can choose $\bfn$ to make $\zeta_i^\bfn$ arbitrarily close to $e^{\rho(\dot\lambda_j+\imag\cdot\dot\beta_j)}$  simultaneously for all $j\in\langle S\rangle\cup I_{[\lambda]}$.

In particular, observe $\bfn\in (H')_{\frac{\epsilon-\epsilon_0}2,S}\subset H_{\frac{\epsilon-\epsilon_0}2,S}$. Actually, for all $i\in S$ and $j\in I$, because of the way we chose $\bfn\in H'$, the Lyapunov exponent $\lambda_i(\bfn)$ and the complex argument $\beta_j(\bfn)$ are respectively arbitrarily close to $\rho\dot\lambda_i$ and $\rho\dot\beta_i$. As both $|\rho\dot\lambda_i|$ and  $|\rho\dot\beta_j|$ are bounded by $\frac\delta {6dC_1C_2^2R}\cdot|\dot\lambda|\leq \frac\delta {6dC_1C_2^2R}\cdot C_2\leq \frac\delta {6dC_1C_2R}\leq\frac{\epsilon-\epsilon_0}6$ by (\ref{netlonglineWLOG}), \begin{equation}|\lambda_i(\bfn)|<\frac{\epsilon-\epsilon_0}2,\forall i\in S;\ |\beta_j(\bfn)|<\frac{\epsilon-\epsilon_0}2, ,\forall j\in I;\end{equation} that is, $\bfn\in (H')_{\frac{\epsilon-\epsilon_0}2,S}$.

Remark
\begin{equation}\begin{split}\zeta^\bfn.y=&\zeta^\bfn.y_*+\zeta^\bfn.v_{\langle S\rangle}+\zeta^\bfn.v\\
=&y_*+\sum_{j\in \langle S\rangle}\zeta_j^\bfn v_j+\sum_{j\in I_{[\lambda]}}\zeta_j^\bfn v_j,\end{split}\end{equation}
by the decomposition of $v_{\langle S\rangle}$ and $v$ and the fact that $\bfn$ is in the stablizer $H'$ of $y_*$.

In consequence since $\zeta_j^\bfn$ is arbitrarily close to $e^{\rho(\dot\lambda_j+\imag\cdot\dot\beta_j)}$, in order to prove (ii) it suffices to show both \begin{equation}\label{Sapprox}\left|\sum_{j\in \langle S\rangle}e^{\rho(\dot\lambda_j+\imag\cdot\dot\beta_j)} v_j-v_{\langle S\rangle}\right|\end{equation} and
\begin{equation}\label{lambdaapprox}\left|\sum_{j\in I_{[\lambda]}}e^{\rho(\dot\lambda_j+\imag\cdot\dot\beta_j)} v_j-v-\rho w\right|\end{equation} are bounded by $\frac\delta3$.

Remark while $|\alpha|\leq1$, $|e^\alpha-1|<2|\alpha|$ and $|e^\alpha-1-\alpha|<|\alpha|^2$.  By (\ref{netlonglineWLOG}), $|\rho(\dot\lambda_j+\imag\cdot\dot\beta_j)|\leq\frac\delta {6dC_1C_2^2R}\cdot C_3\leq\frac\delta {6dC_1C_2^2R}\cdot C_2=\frac\delta{6dC_1C_2R}\leq 1$. So we have \begin{equation}|e^{\rho(\dot\lambda_j+\imag\cdot\dot\beta_j)}-1|\leq 2|\rho(\dot\lambda_j+\imag\cdot\dot\beta_j)|\leq \frac\delta {3dC_1C_2R}\end{equation} and
\begin{equation}|e^{\rho(\dot\lambda_j+\imag\cdot\dot\beta_j)}-1-\rho(\dot\lambda_j+\imag\cdot\dot\beta_j)|\leq |\rho(\dot\lambda_j+\imag\cdot\dot\beta_j)|^2\leq(\frac\delta {6dC_1C_2R})^2.\end{equation}

Thus by (\ref{choicecLslength}), (\ref{Slambdadecomp}) and (\ref{longlineslope}), \begin{equation}\begin{split}(\ref{Sapprox})=&\left|\sum_{j\in \langle S\rangle}(e^{\rho(\dot\lambda_j+\imag\cdot\dot\beta_j)}-1)v_j\right|\leq \frac\delta{3dC_1C_2R}|v_{\langle S\rangle}|\\ \leq&\frac\delta{3C_1C_2R}\cdot C_1\leq\frac\delta3;\end{split}\end{equation}
and \begin{equation}\begin{split}(\ref{lambdaapprox})=&\left|\sum_{j\in I_{[\lambda]}}\big(e^{\rho(\dot\lambda_j+\imag\cdot\dot\beta_j)}-1-\rho(\dot\lambda_j+\imag\cdot\dot\beta_j)\big)v_j\right|\\
\leq&(\frac\delta {6dC_1C_2R})^2\cdot|v|\leq(\frac\delta {6dC_1C_2R})^2\cdot \frac{3d^2C_1^2C_2^2R^2}{\delta}\\
=&\frac\delta{12}.\end{split}\end{equation}

This completes the proof (ii), as well as that of the lemma.\end{proof}

So eventually Proposition \ref{line} is established by the argument following Lemma \ref{netlongline}.

\subsection{Density of lines in the torus}

Proposition \ref{line} reduces Proposition \ref{nonisolated} to the following claim:

\begin{proposition}\label{linerotation} Let $L$ be a line through the origin inside  $\bR^{r_1}\oplus\bC^{r_2}$. Then $\forall\epsilon,\delta>0$, $\exists\bfn\in H_{\epsilon,S}$ such that $\pi(\zeta^\bfn.L)$ is $\delta$-dense in $X$, where $\pi:\bR^{r_1}\oplus\bC^{r_2}\mapsto X$ is the natural projection.\end{proposition}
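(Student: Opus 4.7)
The plan is to deduce $\delta$-density from the arithmetic structure of $X$ using character analysis together with the total irreducibility hypothesis. Writing $L=\bR v$ for a direction vector $v\in\bR^{r_1}\oplus\bC^{r_2}\setminus\{0\}$, I first note that $\zeta^\bfn$ descends to a toral automorphism of $X$ commuting with $\pi$, so $\pi(\zeta^\bfn.L)=\zeta^\bfn.\pi(L)$ and its closure is a subtorus of $X$. It therefore suffices to exhibit $\bfn\in H_{\epsilon,S}$ for which this subtorus equals all of $X$; density of the line's image then automatically yields $\delta$-density for every $\delta>0$.

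For the reformulation, I would use the standard fact that $\overline{\pi(\bR w)}=X$ precisely when $\ell(w)\ne 0$ for every nonzero $\ell\in\Gamma^*=\operatorname{Hom}(\Gamma,\bZ)$. Since $\Gamma\subset\sigma(K)$, the non-degenerate trace pairing on $K$ identifies $\Gamma^*$, up to a sublattice, with a sublattice of $\sigma(K)$, so every such $\ell$ has the form $\ell_\theta(w)=\sum_{i\in I}\sigma_i(\theta)w_i$ for some nonzero $\theta\in K$. The $\bZ^r$-action intertwines this via $\ell_\theta(\zeta^\bfn.w)=\ell_{\theta\zeta^\bfn}(w)$, so the ``bad'' $\bfn$, those for which $\zeta^\bfn.v$ lies in the kernel of some nonzero character, are exactly those for which $\theta\zeta^\bfn$ falls into the annihilator $\mathfrak{a}_v:=\{\mu\in K:\sum_{i\in I}\sigma_i(\mu)v_i=0\}$ for some admissible nonzero $\theta$. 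Since $v\ne 0$, $\mathfrak{a}_v$ is a proper $\bQ$-subspace of $K$, and the bad set is a countable union of ``thin'' subsets of $\bZ^r$.

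To show these thin sets cannot exhaust $H_{\epsilon,S}$, I would combine two ingredients. First, Lemma \ref{nongroupapprox} gives that $H_{\epsilon,S}$ approximates the hyperplane $P_S\subset\bR^r$ within bounded distance, and $\dim P_S\geq 2$ by assumption (1) of Theorem \ref{denseorcentral}; so $H_{\epsilon,S}$ has substantial room transverse to any single thin direction. Second, the total irreducibility of $\alpha^\bfm$ for some $\bfm$ with $|\lambda_i(\bfm)|<\epsilon$ on $S$ (assumption (2) of Theorem \ref{denseorcentral}) forces $\zeta^\bfm$ to generate $K$ over $\bQ$, so for any proper $\bQ$-subspace $\mathfrak{a}_v\subsetneq K$ and any nonzero $\theta$, the orbit $\{\theta\zeta^{k\bfm}\}_{k\in\bZ}$ cannot remain in $\mathfrak{a}_v$. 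A pigeonhole argument over the at-least-two-dimensional set $H_{\epsilon,S}$ then produces $\bfn$ simultaneously avoiding all bad cosets.

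The main obstacle is precisely this arithmetic/Galois step: showing that total irreducibility prevents the annihilator $\mathfrak{a}_v$ from being stabilized by all of $\zeta(\bZ^r)$, so that varying $\bfn$ within $H_{\epsilon,S}$ genuinely moves $\theta\zeta^\bfn$ out of every candidate annihilator. The counterexample following Remark \ref{totnecc} demonstrates that, absent total irreducibility, $\mathfrak{a}_v$ can be $U_K$-stable and the conclusion genuinely fails; thus assumption (2) of Theorem \ref{denseorcentral} must enter in an essential way, and the proof is expected to trace carefully through the orbit structure of $\zeta(\bZ^r)$ on $\Gamma^*$ under the pairing with $v$.
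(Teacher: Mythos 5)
Your reduction to character analysis and your identification of total irreducibility as the essential arithmetic input are on the right track, but there is a genuine gap in the final step, and the overall target you set yourself is too ambitious.

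First, you reduce the proposition to exhibiting a single $\bfn\in H_{\epsilon,S}$ with $\overline{\pi(\zeta^\bfn.L)}=X$, i.e.\ $\xi(\zeta^\bfn.v)\neq 0$ for \emph{every} nontrivial character $\xi$. There is no reason such an $\bfn$ should exist, and the paper does not attempt to produce one. Because the family of characters is countably infinite, a single $\bfn$ may always lie in some bad coset. Your proposed remedy, ``a pigeonhole argument over the at-least-two-dimensional set $H_{\epsilon,S}$ then produces $\bfn$ simultaneously avoiding all bad cosets,'' does not work: a countable union of proper (even one-dimensional) thin subsets can certainly cover all of $\bZ^r$, and nothing in your sketch rules this out. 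The statement of the proposition only asks for $\delta$-density, and that weaker conclusion is all that is available here.

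Second, two concrete ingredients are missing that the paper uses to close exactly this gap. (a) After fixing $\xi$, the condition $\xi(\zeta^\bfn.v)=0$ becomes a unit equation of the form $\xi_{i_0}x_{i_0}+\sum_h \xi_{i_h}x_{i_h}\,\zeta_{i_h}^\bfn/\zeta_{i_0}^\bfn=0$ in the finitely generated multiplicative group $\Psi(\bZ^r)\subset(\bC^\times)^m$; the theorem of Evertse--Schlickewei--Schmidt \cite{ESS02} gives that this has only finitely many solutions, \emph{uniformly} in the coefficients, which is what makes the argument per-character work. Your ``proper $\bQ$-subspace $\mathfrak a_v$'' heuristic is qualitatively in the same spirit but does not produce a usable finiteness statement. (b) The passage from ``for each $\xi$, all but finitely many terms of a sequence $\{\bfn_k\}\subset H_{\epsilon,S}$ avoid $\xi^\bot$'' to ``$\pi(\zeta^{\bfn_k}.L)$ is eventually $\delta$-dense'' is Berend's filling lemma (\ref{filling}); this is the device that converts a per-character statement into a metric density statement, and it is absent from your outline. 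You also need the sequence $\{\bfn_k\}$ itself, which the paper obtains from Lemma~\ref{nonequivunits}: total irreducibility plus a pigeonhole in $G/H$ produce $\bfn_k\in H_{\epsilon,S}$ with pairwise distinct images $\Psi(\bfn_k)$, the precise input needed to apply the unit-equation bound. Without these three pieces (the sequence, ESS, and the filling lemma), the argument does not reach the conclusion.
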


By ``$\delta$-dense'' we mean $\pi(\zeta^\bfn.L)$ is a $\delta$-net of the ambient space $X$.

In order to prove Proposition \ref{linerotation}. We are going to make use of the total irreducibility assumption from Theorem \ref{denseorcentral}.

Recall $d=r_1+2r_2$ and $\sigma_1\cdots,\sigma_{r_1}$ are real embeddings of $K$ while $\sigma_{r_1+1},\cdots,\sigma_d$ are the complex ones where $\sigma_{r_1+r_2+j}=\overline{\sigma_{r_1+j}}$.

\begin{lemma}\label{nonequivunits}Suppose $\{\alpha^\bfn:\bfn\in G_{\epsilon,S}\}$ contains a totally irreducible toral automorphism for all $\epsilon>0$ as assumed in Theorem \ref{denseorcentral}. Let $i_0,i_1,\cdots,i_m$ be distinct elements from $\{1,\cdots,d\}$. Suppose $m\geq 1$ and let $\Psi:\bZ^r\mapsto(\bC^\times)^m$ be the group morphism \begin{equation}\label{unitseqvclass}\Psi(\bfn)=\left(\frac{\zeta_{i_1}^\bfn}{\zeta_{i_0}^\bfn},\cdots,\frac{\zeta_{i_m}^\bfn}{\zeta_{i_0}^\bfn}\right).\end{equation}Then the image $\Psi(H_{\epsilon,S})$ has infinite size for all $\epsilon>0$.\end{lemma}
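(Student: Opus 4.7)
The plan is to argue by contradiction: I will suppose that $\Psi(H_{\epsilon_0,S})$ is a finite set $F$ for some $\epsilon_0>0$, and produce a totally irreducible $\alpha^\bfn$ whose $\zeta^\bfn$ must land in a proper subfield of $K$, contradicting Remark \ref{alphazetairr}. The algebraic object that drives everything is the subfield
\begin{equation}
L=\{\theta\in K:\sigma_{i_0}(\theta)=\sigma_{i_1}(\theta)=\cdots=\sigma_{i_m}(\theta)\},
\end{equation}
which is a subfield of $K$ since the $\sigma_i$'s are ring homomorphisms, and is \emph{proper} because the $\sigma_i$'s are distinct embeddings and $m\geq 1$. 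By construction $\ker\Psi=\{\bfn\in\bZ^r:\zeta^\bfn\in L\}$.

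The first nontrivial step is to promote finiteness of $\Psi(H_{\epsilon_0,S})$ into a linear constraint on $P_S$. Choosing representatives $\bfm_1,\dots,\bfm_N\in H_{\epsilon_0,S}$ of the $\Psi$-values in $F$ gives $H_{\epsilon_0,S}\subset\bigcup_{k=1}^N(\bfm_k+\ker\Psi)$. Let $W_0\subset\bR^r$ be the real span of $\ker\Psi$ and $\pi:\bR^r\twoheadrightarrow\bR^r/W_0$ the quotient projection. Lemma \ref{nongroupapprox} provides $C=C(\epsilon_0,H)$ so that every $\eta\in P_S$ is within distance $C$ of $H_{\epsilon_0,S}$. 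Consequently $\pi(P_S)$ is contained in a bounded $\|\pi\|\cdot C$-neighborhood of the finite set $\{\pi(\bfm_k)\}$; since $\pi(P_S)$ is itself a linear subspace, it must be trivial, forcing $P_S\subset W_0$.

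Because $W_0\subset\bigcap_j\ker(\lambda_{i_j}-\lambda_{i_0})$, each $\lambda_{i_j}-\lambda_{i_0}$ vanishes on $P_S$ and thus sits in $P_S^\perp=L_S$. Writing $\lambda_{i_j}-\lambda_{i_0}=\sum_{i\in S}c_{ij}\lambda_i$ and combining $|\lambda_i(\bfn)|<\epsilon$ for $i\in S$ with $\|\beta_{i_j}(\bfn)-\beta_{i_0}(\bfn)\|<2\epsilon$ for $\bfn\in H_{\epsilon,S}$ yields a uniform bound $|\Psi_j(\bfn)-1|=O(\epsilon)$ on $H_{\epsilon,S}$. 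Since $\Psi(H_{\epsilon,S})\subset F$ is fixed and finite and contains $\Psi(0)=(1,\dots,1)$, choosing $\epsilon$ smaller than the minimal separation of $(1,\dots,1)$ from the rest of $F$ forces $\Psi(\bfn)=(1,\dots,1)$ throughout $H_{\epsilon,S}$; that is, $H_{\epsilon,S}\subset\ker\Psi$.

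Finally I would invoke the hypothesis to reach a contradiction. Let $k=|G/H|$; the assumption gives $\bfn_*\in G_{\epsilon/k,S}$ with $\alpha^{\bfn_*}$ totally irreducible, and for $\epsilon$ small enough that $k\epsilon<\pi$ the element $k\bfn_*$ lies in $H_{\epsilon,S}$ (no wrap-around in $\beta$) with $\alpha^{k\bfn_*}$ still totally irreducible since powers of totally irreducible automorphisms are totally irreducible. Combined with the previous paragraph, $k\bfn_*\in\ker\Psi$, so $\zeta^{k\bfn_*}\in L$; if $L\supsetneq\bQ$ this directly contradicts Remark \ref{alphazetairr}, while if $L=\bQ$ then $\zeta^{k\bfn_*}\in\{\pm1\}$ makes $\alpha^{k\bfn_*}=\pm\mathrm{Id}$, which is not irreducible on $\bT^d$ with $d\geq m+1\geq 2$. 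The main obstacles are the projection argument in the second paragraph, i.e., rigorously converting a finite-cover statement about a bounded approximation of $P_S$ into the inclusion $P_S\subset W_0$, and the careful bookkeeping needed to descend the totally irreducible element from $G_{\cdot,S}$ down to $H_{\epsilon,S}$ while keeping the $\beta_j$-bounds intact modulo $2\pi$.
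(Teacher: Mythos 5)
Your proof is correct, but it takes a genuinely different and more structural route than the paper's. The paper's argument is a pure double pigeonhole: assuming $|\Psi(H_{\epsilon,S})|=N$ and $q=|G/H|$, it takes a single totally irreducible $\bfn\in G_{\epsilon/(Nq),S}$, pigeonholes among $\{0,\ldots,q\}\cdot\bfn$ to land a nonzero multiple in $H_{\epsilon/N,S}$, then pigeonholes again among $N+1$ further multiples to force a nonzero multiple into $\ker\Psi$, contradicting Remark \ref{alphazetairr} directly. Your argument instead upgrades the finiteness hypothesis to the much stronger structural inclusion $P_S\subset W_0:=\operatorname{span}_\bR(\ker\Psi)$ via the bounded-approximation Lemma \ref{nongroupapprox} and the observation that a linear subspace lying in a bounded neighborhood of a finite set is trivial; this forces each $\lambda_{i_j}-\lambda_{i_0}$ into $L_S$, hence $\Psi$ is uniformly $O(\epsilon)$-close to $(1,\ldots,1)$ on $H_{\epsilon,S}$, and since its image sits inside the fixed finite set $F$, $H_{\epsilon,S}\subset\ker\Psi$ outright once $\epsilon$ is small. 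The paper's route is shorter, self-contained, and does not invoke Lemma \ref{nongroupapprox}; yours exposes more geometry (in effect, finiteness of $\Psi(H_{\epsilon_0,S})$ is shown to be equivalent to the non-hyperbolic plane $P_S$ collapsing into the span of $\ker\Psi$), at the cost of importing the suspension-style density lemma. Two small bookkeeping notes: the wrap-around condition in your last step only needs $\epsilon<\pi$, not $k\epsilon<\pi$ (lifting $\beta_j(\bfn_*)$ to $(-\epsilon/k,\epsilon/k)$ and multiplying by $k$ already keeps you in $(-\pi,\pi)$); and when some $i_j>r_1+r_2$ you are implicitly extending $\lambda_{i_j}$, $\beta_{i_j}$ via $\zeta_{r_1+r_2+j}^\bfn=\overline{\zeta_{r_1+j}^\bfn}$, which is harmless but worth stating. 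Your separate treatment of $L=\bQ$ is a good precaution given the ambiguous phrasing of ``non-trivial proper subfield'' in Remark \ref{alphazetairr}.
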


\begin{proof}Suppose for contradiction that $\Psi(H_{\epsilon,S})$ has finite size $N$ for some $\epsilon$. Let $q$ be the index of $H$ in $G=\bZ^r$. By assumption, there exists $\bfn\in G_{\frac\epsilon{Nq},S}$ such that $\alpha^{k\bfn}$ is irreducible for all $k\neq 0$.

By pigeonhole principle, there are two distinct $k,k'\in\{0,1,\cdots,q\}$ such that $k\bfn$, $k'\bfn$ belong to the same coset of $H$. Then $(k-k')\bfn$ is in both $H$ and $G_{\frac\epsilon N,S}$, hence in $H_{\frac\epsilon N,S}$.

Then $h(k-k')\bfn\in H_{\epsilon,S},\ \forall h\in\{0,1,\cdots,N\}$. As $|\Psi(H_{\epsilon,S})|=N$, there are $h\neq h'$ in $\{0,1,\cdots,N\}$ such that $\Psi\big(j(k-k')\bfn\big)=\Psi\big(h'(k-k')\bfn\big)$. So $(h-h')(k-k')\bfn\in H$ and $\Psi\big((h-h')(k-k')\bfn\big)=(1,1,\cdots,1)$, which in particular implies \begin{equation}\label{embcollapse}\zeta_{i_1}^{(h-h')(k-k')\bfn}=\zeta_{i_0}^{(h-h')(k-k')\bfn}.\end{equation} Recall $\zeta_{i_1}^{(h-h')(k-k')\bfn}$ and $\zeta_{i_0}^{(h-h')(k-k')\bfn}$ are respectively algebraic conjugates of $\zeta^{(h-h')(k-k')\bfn}$ by $\sigma_{i_1}$ and $\sigma_{i_0}$, two different embeddings of the number field $K$. (\ref{embcollapse}) actually shows $\zeta^{(h-h')(k-k')\bfn}$ belongs to some proper subfield of $K$. Thus by Remark \ref{alphazetairr}, $\alpha^\bfn$ is not a totally irreducible toral automorphism on $\bT^d$ because $\deg f<d$ and $(h-h')(k-k')\neq 0$, which contradicts our assumption. The proof is completed.\end{proof}

The dual group $\hat X$ of $X=\bR^{r_1}\oplus\bC^{r_2}/\Gamma$ consists of all real linear functionals $\xi:\bR^{r_1}\oplus\bC^{r_2}\mapsto\bR$ such that $\xi(\Gamma)\subset\bZ$ and the character $\xi:X\mapsto\bR/\bZ$ is defined by $\xi\big(\pi(v)\big)=\big(\xi(v) \mod \bZ\big), \forall v\in \bR^{r_1}\oplus\bC^{r_2}$.

As a linear functional, $\xi$ may be expressed as \begin{equation}\label{fourier}\xi(x)=\sum_{i=1}^{r_1}\xi_ix_i+\sum_{i=r_1}^{r_1+r_2}\big(\xi_i x_i+\xi_{i+r_2}\overline{x_i}\big),\end{equation} where $x=(x_1,\cdots,x_{r_1+r_2})$ with $x_1,\cdots,x_{r_1}\in\bR$, $x_{r_1+1},\cdots,x_{r_1+r_2}\in\bC$. In order that $\xi$ takes real values, $\xi_1,\cdots,\xi_{r_1}\in\bR$ and $\xi_{i+r_2}=\overline{\xi_i}$ for $i=r_1+1,\cdots,r_1+r_2$.

\begin{lemma}\label{eigenirrational}If $\xi\neq 0$ then $\xi_i\neq 0$, $\forall i=1,\cdots,d$.\end{lemma}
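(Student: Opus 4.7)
The plan is an argument by contradiction that uses the irreducibility assumption built into the standing setup. Suppose $\xi$ is a non-zero character with $\xi_{i_0}=0$ for some index $i_0\in\{1,\cdots,d\}$. I first observe that the $\bZ^r$-action on $X$ induces a dual action on $\hat X$ by $(\bfn\cdot\xi)(x)=\xi(\zeta^\bfn.x)$. In the coordinates of (\ref{fourier}) this action is diagonal: it sends $(\xi_i)_i$ to $(\xi_i\zeta_i^\bfn)_i$. Hence every element $\bfn\cdot\xi$ of the orbit still has vanishing $i_0$-th coefficient.

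Let $M\subset\hat X$ be the $\bZ$-submodule generated by the orbit $\{\bfn\cdot\xi:\bfn\in\bZ^r\}$. The coefficient description shows that $\hat X\otimes_\bZ\bR$ is naturally the real dual of $\bR^{r_1}\oplus\bC^{r_2}$, a real vector space of dimension $d=r_1+2r_2$ (the $r_2$ complex parameters $\xi_{r_1+1},\cdots,\xi_{r_1+r_2}$, with $\xi_{i+r_2}=\overline{\xi_i}$, contribute $2r_2$ real dimensions). The constraint $\xi_{i_0}=0$ carves out a proper real subspace $W\subset\hat X\otimes_\bZ\bR$ (of real codimension $1$ if $i_0\leq r_1$ and codimension $2$ otherwise), and $M\subset W\cap\hat X$ has $\bZ$-rank at most $d-1$.

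Consequently the annihilator $Y=\{x\in X:\eta(x)=0\in\bR/\bZ,\ \forall\eta\in M\}$ is a closed subgroup of $X$ with $\dim Y\geq 1$, so its identity component $Y_0$ is a subtorus of positive dimension. By construction $M$ is $\bZ^r$-invariant, which makes $Y$, and therefore $Y_0$, invariant under the full $\alpha$-action. Moreover $\xi\in M$ is non-zero, so $Y\neq X$ and $Y_0$ is a proper subtorus.

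To finish, I invoke the standing assumption of $\S\ref{Xnota}$ (inherited from Proposition \ref{Gfield}) that $\alpha(\bZ^r)$ contains an irreducible toral automorphism $\alpha^\bfm$. This $\alpha^\bfm$ preserves the positive-dimensional proper subtorus $Y_0$, contradicting its irreducibility. The only real bookkeeping required is to check that the codimension count $W\subsetneq\hat X\otimes_\bZ\bR$ is uniform across the cases $i_0\leq r_1$ and $r_1<i_0\leq d$ (using the conjugation relation $\xi_{i+r_2}=\overline{\xi_i}$ when $i_0>r_1+r_2$); this is where the lemma's statement for every $i\in\{1,\cdots,d\}$ comes from.
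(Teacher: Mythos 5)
Your argument is correct, and it reaches the same final contradiction (a proper positive-dimensional $\alpha$-invariant subtorus violating irreducibility), but it travels a dual route. The paper's proof works on the ``primal'' side: observing that $\xi_i=0$ forces $\xi$ to vanish on the whole eigenspace $V_i$ (using $\xi_{i+r_2}=\overline{\xi_i}$ for the complex case), so $\xi^\bot\supset\pi(V_i)$, whence $\overline{\pi(V_i)}$ is a proper closed connected subgroup; since multiplication by $\zeta^\bfn$ preserves $V_i$, this subgroup is $\bZ^r$-invariant, and irreducibility is contradicted in one step. You instead work on the character side: you note the dual action on $\hat X$ is diagonal, so the $\bZ$-module $M$ generated by the orbit of $\xi$ stays inside the proper hyperplane (or codimension-$2$ plane) $\{\xi_{i_0}=0\}$ and hence has rank $\leq d-1$; the annihilator $Y=M^\bot$ is then a proper closed $\alpha$-invariant subgroup of positive dimension. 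The paper's version is a bit shorter because it produces the invariant subgroup directly from a single eigenspace, whereas your version requires the extra observation that the dual orbit generates a low-rank module. In fact $\pi(V_{i_0})\subset Y_0$, so the paper's subtorus is contained in yours; both suffice. One small remark: the rank bound $\rank_\bZ M\leq d-1$ deserves one sentence of justification (e.g.\ $M\subset W\cap\hat X$ and $W$ is a proper real subspace, so $M\otimes\bR\subset W$ has dimension $\leq d-1$); this is implicit in your write-up but worth making explicit.
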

\begin{proof}Suppose $\xi_i=0$ for some $i$. If $i\leq r_1$ then $V_i\cong\bR$ and by (\ref{fourier}), $\xi|_{V_i}=0$. Suppose $i>r_1$, then $\overline{\xi_i}=0$ as well, and therefore, since $\xi_{i+r_2}=\overline{\xi_i}$, we may assume $r_1+1\leq i\leq r_1+r_2$. So in this case once again we have $\xi|_{V_i}=0$ by (\ref{fourier}). Because $\xi$ is non-trivial, $\xi^\bot=\{x\in X: \xi(x)=0\}$ is a proper closed subgroup of $X$. Since $\xi|_{V_i}=0$, $\xi^\bot$ contains $\pi(V_i)$; so  $\overline{\pi(V_i)}$ is a proper closed subgroup in $X$ as well. However, for $\bfn\in\bZ^r$, the multiplicative action by $\zeta^\bfn$ preserves $V_i\subset \bR^{r_1}\oplus\bC^{r_2}$, hence also preserves $\pi(V_i)$ and $\overline{\pi(V_i)}$. Therefore $\overline{\pi(V_i)}\subset X$ is a proper connected closed subgroup invariant under the $\bZ^r$-action $\zeta^\bfn\curvearrowright X$, which is conjugate to the $\bZ^r$-action $\alpha$ on $\bT^d$. Thus $\alpha$ admits a proper connected invariant closed subgroup in $\bT^d$, which is necessarily a subtorus. This violates the assumption that $\alpha$ contains irreducible toral automorphisms. Hence $\xi_i\neq 0$, $\forall i=1,\cdots,d$.\end{proof}

We borrow our next lemma from Berend's original proof.

\begin{lemma}\label{filling}\cite{B83}*{Lemma 4.7} Let $X$ be a compact abelian metric group  and $\hat X$ its Pontryagin dual. Suppose $\{X_k\}_{k=1}^\infty$ is a sequence of proper closed subgroups in $X$ satisfying: $\forall\xi\in\hat X\backslash\{0\}$, $X_k\notin\xi^\bot$ for sufficiently large $k$. Then for any $\delta>0$, $X_k$ is $\delta$-dense for sufficiently large $k$.\end{lemma}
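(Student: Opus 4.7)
I would prove Lemma \ref{filling} by contradiction. Suppose that for some $\delta_0>0$ the conclusion fails, so $X_k$ is not $\delta_0$-dense for infinitely many $k$. Passing to a subsequence we may assume this holds for every $k$, and choose witnesses $y_k\in X$ with $d(y_k,X_k)>\delta_0$. The space of closed subsets of the compact metric space $X$ is itself compact in the Hausdorff metric, so, after extracting a further subsequence, we may assume $y_k\to y_\infty$ in $X$ and $X_k\to Y$ in Hausdorff distance for some closed subset $Y\subset X$.

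The first step is to verify that $Y$ is a proper closed subgroup of $X$. Closure is automatic. For the subgroup property, Hausdorff convergence lets us pick, for each $a,b\in Y$, sequences $a_k,b_k\in X_k$ with $a_k\to a$ and $b_k\to b$; then $a_k-b_k\in X_k$ converges to $a-b$. Conversely, the other side of the Hausdorff distance gives $d(x,Y)\to 0$ uniformly for $x\in X_k$, so every limit of a convergent sequence drawn from the $X_k$'s lies in $Y$; hence $a-b\in Y$. Finally $d(y_\infty,Y)\geq\delta_0>0$, so $Y\subsetneq X$.

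Next I would invoke Pontryagin duality: since $Y$ is a proper closed subgroup of the compact abelian group $X$, there exists a character $\xi\in\hat X\setminus\{0\}$ with $\xi|_Y\equiv 0$. The plan is to show that $X_k\subset\xi^\bot$ for all sufficiently large $k$, which directly contradicts the hypothesis of the lemma applied to this specific $\xi$. For this, exploit uniform continuity of $\xi:X\to\bR/\bZ$ on the compact space $X$: for every $\epsilon>0$ there is $\eta>0$ such that $d(x,x')<\eta$ implies $\xi(x)-\xi(x')$ lies within distance $\epsilon$ of $0$ in $\bR/\bZ$. For $k$ large, Hausdorff convergence forces every $x\in X_k$ to lie within $\eta$ of some $y\in Y$; combined with $\xi(y)=0$ this gives $\xi(X_k)\subset B_\epsilon$, the $\epsilon$-ball around $0$ in $\bR/\bZ$.

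The main obstacle, and the key rigidity input, is upgrading this approximate vanishing to exact vanishing. Because $X_k$ is a closed subgroup and $\xi$ is a continuous homomorphism, $\xi(X_k)$ is itself a closed subgroup of $\bR/\bZ$. The closed subgroups of $\bR/\bZ$ are $\{0\}$, the finite cyclic groups $\{j/n:0\leq j<n\}$ with $n\geq 2$, and the whole circle; in every nontrivial case the subgroup contains an element at distance at least $1/3$ from $0$ (namely $\lfloor n/2\rfloor/n$ in the cyclic case, and for instance $1/2$ in the full-circle case). Hence fixing $\epsilon<1/3$ at the outset, the inclusion $\xi(X_k)\subset B_\epsilon$ forces $\xi(X_k)=\{0\}$, i.e.\ $X_k\subset\xi^\bot$, for all sufficiently large $k$, which contradicts the hypothesis and completes the proof.
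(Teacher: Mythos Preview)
Your proof is correct. The paper does not actually give its own proof of this lemma; it is quoted verbatim from Berend \cite{B83}*{Lemma 4.7} and used as a black box, so there is no in-paper argument to compare against. Your Hausdorff-limit argument is a clean self-contained proof: the compactness of the hyperspace of closed subsets gives a subsequential limit $Y$, the two-sided Hausdorff convergence shows $Y$ is a closed subgroup, the witnesses $y_k$ keep $Y$ proper, and the rigidity step---that a closed subgroup of $\bR/\bZ$ contained in a ball of radius less than $1/3$ must be trivial---converts approximate annihilation into exact annihilation, yielding the contradiction.
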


Now we are ready to show Proposition \ref{linerotation}.

\begin{proof}[Proof of Proposition \ref{linerotation}] Denote the line $L$ by $\bR v$ for some vector $v=(x_i)_{i\in I}$ where $x_i\in\bR$ or $\bC$ according to whether $i\leq r_1$ or not. Denote $x_{r_1+r_2+j}=\overline{x_{r_1+j}}$ for $1\leq j\leq r_2$ and \begin{equation}I_L=\{i: 1\leq i\leq d, \ x_i\neq 0\},\end{equation} then $I_L$ is not empty. Write $I_L=\{i_0,\cdots,i_m\}$. Then $\forall\xi\in\hat X\backslash\{0\}$, $\xi(v)=\sum_{h=0}^m\xi_{i_h}x_{i_h}$ by (\ref{fourier}).\newline

{\noindent\bf Case 1}. If $m=0$ then take $\bfn=0\in H_{\epsilon,S}$. Note $\overline{\pi(L)}\subset X$  is a closed subgroup of $X$. For all $\xi\in\hat X\backslash\{0\}$, we have $\xi(v)=\xi_{i_0}x_{i_0}\neq 0$ thanks to Lemma \ref{eigenirrational}, so $\xi(\pi(av))=(a\xi(v)\mod\bZ)\neq 0$ for almost every $a\in\bR$. Hence as $\pi(av)\in \overline{\pi(L)}$, $\overline{\pi(L)}\notin\xi^\bot$ for all non-trivial $\xi$. Therefore $\overline{\pi(L)}=X$.\newline

{\noindent\bf Case 2}. If $m\geq 1$ then by applying Lemma \ref{nonequivunits} we obtain a sequence $\{\bfn_k\}_{k=1}^\infty\subset H_{\epsilon,S}$ such that the $\Psi(\bfn_k)$'s are all distinct where $\Psi$ is the group morphism in (\ref{unitseqvclass}).

Now fix $\xi\in\hat X\backslash\{0\}$ and consider $\xi(\zeta^{\bfn_k}.v)=\sum_{h=0}^m\xi_{i_h}\zeta_{i_h}^{\bfn_k} x_{i_h}$. Here $\xi_{i_h}x_{i_h}\neq 0,\forall h$ by Lemma \ref{eigenirrational} and the choices of $i_0,\cdots,i_m$. If $\xi(\zeta^{\bfn_k}.v)=0$ then \begin{equation}\label{uniteq}\xi_{i_0}x_{i_0}+\sum_{h=1}^m\xi_{i_h}x_{i_h}\frac{\zeta_{i_h}^{\bfn_k}}{\zeta_{i_0}^{\bfn_k}}=0.\end{equation}

(\ref{uniteq}) is a linear equation in the variable $\Psi(\bfn_k)=\left(\dfrac{\zeta_{i_h}^{\bfn_k}}{\zeta_{i_0}^{\bfn_k}}\right)_{h=1}^m\in(\bC^\times)^m$. Since $\Psi$ is a group morphism $\Psi(\bZ^r)$ is a subgroup of finite rank in $(\bC^\times)^m$. It follows from the estimate \cite{ESS02}*{Theorem 1.1} by Evertse, Schlikewei and Schmidt of number of solutions to unit equations in a multiplicative group , that there are only finitely many solutions to (\ref{uniteq}) in $\Psi(\bZ^r)$.

Because all the $\Psi(\bfn_k)$'s are different, when $k$ is sufficiently large (\ref{uniteq}) fails, i.e. $\xi(\zeta^{\bfn_k}.v)\neq0$.

Therefore  when $k$ is large enough, for almost all  $a\in\bR$ the expression $\xi\big(\pi(a\zeta^{\bfn_k}.v)\big)=\big(a\xi(\zeta^{\bfn_k}.v)\text{ mod }\bR/\bZ\big)$ doesn't vanish. As $a\pi(\zeta^{\bfn_k}.v)\in\pi\big(\zeta^{\bfn_k}.L\big)$. The closed subgroup $\overline{\pi\big(\zeta^{\bfn_k}.L\big)}\subset X$ is not contained in $\xi^\bot$. Since $\xi$ is an arbitrary non-trivial character, Lemma \ref{filling} claims $\forall\delta>0$, $\overline{\pi\big(\zeta^{\bfn_k}.L\big)}$ is $\delta$-dense in $X$ when $k$ is large enough, and thus so is $\pi\big(\zeta^{\bfn_k}.L\big)$ itself. Proposition \ref{linerotation} is proved.\end{proof}

Finally we are ready to give the proof of Proposition \ref{nonisolated}.

\begin{proof}[Proof of Proposition \ref{nonisolated}] If $A_{\epsilon_0}$ contains a $V_{\langle S\rangle}$ pattern for some $\epsilon_0<\epsilon$, then by Proposition \ref{line} $A_{\frac{\epsilon_0+\epsilon}2}$ contains $y+L$  where $y\in X$, and $L$ is a line in $\bR^{r_1}\oplus\bC^{r_2}$. Take $\bfn\in H_{\frac{\epsilon-\epsilon_0}2,S}$ such that $\pi(\zeta^\bfn.L)$ is $\delta$-dense in $X$. Then $\zeta^\bfn.(y+L)$, which is the translate of $\pi(\zeta^\bfn.L)$ by $\zeta^\bfn.y\in X$, is $\delta$-dense as well; and therefore so is $A_\epsilon$ because $A_\epsilon\supset H_{\frac{\epsilon-\epsilon_0}2,S}. A_{\frac{\epsilon_0+\epsilon}2}\supset\zeta^\bfn.(y+L)$. $A_\epsilon$ is actually dense in $X$ because $\delta$ can be arbitrarily small. So $A_\epsilon=X$ by closedness.\end{proof}

\section{Proof of the main results}

\subsection{Characterization of minimal $(G,S)$-invariant sets} In order to be able to use what was showed in the previous section, we hope to show every $(G,S)$-invariant set contains a $V_{\langle S\rangle}$-translated torsion point.

We need two lemmas to establish this fact. The first one is quite simple and the second one relies on Proposition \ref{minunion}.

\begin{lemma}\label{recurtorsion}Suppose $\bfm,\bfn\in\bZ^r$ are not equal,

(i) If $x\in X$ satisfies $\zeta^\bfm.x=\zeta^\bfn.x$, then $x$ is a torsion point;

(ii) If $\zeta^\bfm.x=\zeta^\bfn.x+v$ where $v\in V_{\langle S\rangle}$, then $x$ is a $V_{\langle S\rangle}$-translated torsion point; \end{lemma}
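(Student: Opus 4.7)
The plan is to lift everything to the universal cover $\bR^{r_1}\oplus\bC^{r_2}$ and exploit the fact that $\zeta^\bfm-\zeta^\bfn$ acts as multiplication by a nonzero element of the number field $K$.

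First I would handle (i). Pick a lift $\tilde x\in\bR^{r_1}\oplus\bC^{r_2}$ of $x$. The hypothesis $\zeta^\bfm.x=\zeta^\bfn.x$ translates to $T\tilde x\in\Gamma$, where $T:=\zeta^\bfm-\zeta^\bfn$ acts diagonally on $\bigoplus_{i\in I}V_i$ by multiplication by $\zeta_i^\bfm-\zeta_i^\bfn=\sigma_i(\zeta^\bfm-\zeta^\bfn)$ on the $i$-th summand. Because $\zeta:\bZ^r\hookrightarrow U_K$ is injective and $\bfm\neq\bfn$, the field element $\zeta^\bfm-\zeta^\bfn$ is nonzero in $K$, hence all $\sigma_i(\zeta^\bfm-\zeta^\bfn)$ are nonzero and $T$ is an invertible $\bR$-linear operator. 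Moreover $T$ preserves the subspace $\sigma(K)\subset\bR^{r_1}\oplus\bC^{r_2}$ and acts there as multiplication by $\zeta^\bfm-\zeta^\bfn\in K^\times$. Writing $T\tilde x=\gamma\in\Gamma\subset\sigma(K)$, we get $\tilde x=T^{-1}\gamma\in\sigma(K)$. Since $\Gamma$ and $\sigma(\cO_K)$ are two $\bZ$-lattices of full rank in the $d$-dimensional $\bQ$-space $\sigma(K)$, they are commensurable, so there exists a positive integer $q$ with $q\sigma(K)\cap\sigma(\cO_K)\subset\Gamma$ whenever applied appropriately; concretely, $\tilde x=\sigma(\theta)$ for some $\theta\in K$, and clearing denominators yields $q\tilde x\in\sigma(\cO_K)$, and enlarging $q$ ensures $q\tilde x\in\Gamma$. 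Hence $qx=0$ in $X$, proving $x$ is a torsion point.

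For (ii) I would adapt the same argument. Identifying $v\in V_{\langle S\rangle}$ with its natural lift $\tilde v\in V_{\langle S\rangle}\subset\bR^{r_1}\oplus\bC^{r_2}$, the assumption gives $T\tilde x-\tilde v\in\Gamma$, i.e.\ $T\tilde x=\tilde v+\gamma$ for some $\gamma\in\Gamma$. The key observation is that $T$ preserves each Lyapunov line $V_i$, hence preserves $V_{\langle S\rangle}=\bigoplus_{i\in\langle S\rangle}V_i$, and since $T$ is invertible, $u:=T^{-1}\tilde v\in V_{\langle S\rangle}$. Therefore
\begin{equation}
\tilde x = u + T^{-1}\gamma,
\end{equation}
and by the part (i) argument applied to $T^{-1}\gamma\in\sigma(K)$, the point $x_*:=\pi(T^{-1}\gamma)$ is a torsion point. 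Writing $x=x_*+u$ with $u\in V_{\langle S\rangle}$ gives exactly the definition of a $V_{\langle S\rangle}$-translated torsion point.

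The only step requiring any care is the invertibility and coordinate-wise description of $T$, which is a direct consequence of $\zeta$ being an embedding of $\bZ^r$ into $U_K\subset K^\times$ together with the compatibility (\ref{fieldmulti}) between the $K$-action and $\sigma$; everything else reduces to elementary commensurability of $\Gamma$ with $\sigma(\cO_K)$.
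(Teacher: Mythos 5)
Your proof is correct and follows essentially the same approach as the paper's: lift to $\bR^{r_1}\oplus\bC^{r_2}$, observe that $T=\zeta^\bfm-\zeta^\bfn$ acts as multiplication by a nonzero element of $K$ (hence $T$ is invertible and preserves $\sigma(K)$ and each $V_i$, and therefore $V_{\langle S\rangle}$), clear denominators using the full rank of $\Gamma$, and reduce (ii) to (i) by peeling off the $V_{\langle S\rangle}$ component $u=T^{-1}\tilde v$. One minor phrasing issue: the clause ``$q\sigma(K)\cap\sigma(\cO_K)\subset\Gamma$'' does not parse, since $q\sigma(K)=\sigma(K)$ for any $q\neq 0$; your subsequent ``concretely'' sentence already repairs this, and the paper sidesteps $\cO_K$ entirely by noting that $\Gamma$ has full rank so $\tilde x\in\bQ\Gamma$ and thus $q\tilde x\in\Gamma$ for some integer $q$.
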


\begin{proof}(i) Write $x=\pi(\tilde x)$ where $\tilde x\in\bR^{r_1}\oplus\bC^{r_2}$. Then $\zeta^\bfm.\tilde x-\zeta^\bfn.\tilde x\in\Gamma$. Recall $\bfm\mapsto\zeta^\bfm$ is a group embedding of $\bZ^r$ into the group of units $U_K$. So if $\bfm\neq \bfn$ then $\zeta^\bfm\neq\zeta^\bfn$ and we have $(\zeta^\bfm-\zeta^\bfn)^{-1}\in K$. So $\tilde x\in (\zeta^\bfm-\zeta^\bfn)^{-1}.\Gamma$ where the multiplication is given by (\ref{fieldmulti}). Recall $\bR^{r_1}\oplus\bC^{r_2}\cong K\otimes_\bQ\bR$ via the map $\sigma$ given in (\ref{fieldemb}), and $\Gamma\subset\sigma(K)$ is a full-rank sublattice of $\bR^{r_1}\oplus\bC^{r_2}$. Thus $\tilde x\in (\zeta^\bfm-\zeta^\bfn)^{-1}.\sigma(K)=\sigma\big((\zeta^\bfm-\zeta^\bfn)^{-1}.K\big)=\sigma(K)$ where we used (\ref{embmulti}). Because $\Gamma$ has full rank, its $\bQ$-span is $\sigma(K)$ and contains $\tilde x$. Therefore there is an integer $q$ such that $q\tilde x\in\Gamma$. So $qx=0$ in $X=(\bR^{r_1}\oplus\bC^{r_2})/\Gamma$.

(ii) Let $x'=x+(\zeta^\bfm-\zeta^\bfn)^{-1}.v\in X$ then $\zeta^\bfm.x'=\zeta^\bfn.x'$. By (i), $x'$ is of torsion. Moreover $(\zeta^\bfm-\zeta^\bfn)^{-1}.v\in V_{\langle S\rangle}$. Thus $x$ is a $V_{\langle S\rangle}$-translated torsion point.\end{proof}

\begin{lemma}\label{minfull} There doesn't exist a pair of minimal $(G,S)$-invariant closed sets $M$, $M'$ such that $M+M'=X$. \end{lemma}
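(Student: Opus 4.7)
My plan is to derive a contradiction in three stages by combining the reduction $M'=-M$ with Lemma~\ref{recurtorsion} and the finiteness result Lemma~\ref{torsionorbit}(ii).

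First, I would show that $M'=-M$. Since $0 \in X = M + M'$, there exist $m \in M$ and $m' \in M'$ with $m + m' = 0$, so $-m = m' \in M' \cap (-M)$. The map $x \mapsto -x$ is a homeomorphism of $X$ that commutes with every $\zeta^\bfn$, so it preserves the structure of $(G,S)$-invariant closed sets and the property of being minimal among them. Hence $-M$ is a minimal $(G,S)$-invariant closed set, and because two such minimal sets that share a point must coincide, $M' = -M$ and consequently $M - M = X$.

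Second, I would reduce to showing that $M$ contains a $V_{\langle S\rangle}$-translated torsion point. If $M$ contains such a point $x$, then by minimality $M = \bigcap_{\epsilon>0}\overline{G_{\epsilon,S}.x}$, which by Lemma~\ref{torsionorbit}(ii) is a finite set of $V_{\langle S\rangle}$-translated torsion points. Then $M - M$ is contained in a finite union of translates of $V_{\langle S\rangle}$, which by Lemma~\ref{centralproper} is a proper closed subset of $X$; this would contradict $M - M = X$. So the proof reduces to producing such a torsion point from the hypothesis $M - M = X$.

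Third, to produce the torsion point I would invoke Lemma~\ref{recurtorsion}(ii): it suffices to exhibit $x \in M$ and distinct $\bfm, \bfn \in \bZ^r$ with $\zeta^\bfm.x - \zeta^\bfn.x \in V_{\langle S\rangle}$. From $M - M = X$, every rational torsion point $t \in X$ has a representation $t = m_1 - m_2$ with $m_1, m_2 \in M$. Choosing $t$ of finite order, the stabilizer $H = \{\bfn \in \bZ^r : \zeta^\bfn.t = t\}$ has finite index in $\bZ^r$, and for $\bfn \in H$ one has $\zeta^\bfn.m_1 - \zeta^\bfn.m_2 = t$. Proposition~\ref{minunion} applied to this $H$ decomposes $M$ into finitely many minimal $(H,S)$-invariant closed subsets, so the pairs $(\zeta^\bfn.m_1, \zeta^\bfn.m_2)$ distribute into a finite set of ``components''. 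A pigeonhole argument on these components, combined with the recurrence of $m_1$ under the partial action of $H_{\epsilon,S}$, should yield two elements $\bfn_1 \neq \bfn_2$ of $H$ with $\zeta^{\bfn_1}.m_1$ and $\zeta^{\bfn_2}.m_1$ in the same refined minimal piece and with $\zeta^{\bfn_1}.m_1 - \zeta^{\bfn_2}.m_1 \in V_{\langle S\rangle}$, putting $x = m_1$ into the hypothesis of Lemma~\ref{recurtorsion}.

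The main obstacle is precisely this last step: converting the topological identity $M - M = X$ into the exact algebraic identity required by Lemma~\ref{recurtorsion}. The essential ingredients are the finite refinement given by Proposition~\ref{minunion}, the minimality of $M$ under the partial slice action, and the hyperbolicity of the $H_{\epsilon,S}$-action on $V_{\langle S\rangle}^\perp$ afforded by the choice of $\langle S\rangle$; putting these together rigorously, in such a way that the resulting $\bfn_1, \bfn_2$ are actually distinct and the difference lands exactly in $V_{\langle S\rangle}$ rather than merely near it, is the delicate point.
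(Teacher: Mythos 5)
Your Steps 1 and 2 are sound: the observation that $M' = -M$ (since $-M$ is again a minimal $(G,S)$-invariant closed set and the two share the point $-m$, so Lemma~\ref{mingene} forces them to coincide) is a clean reduction, and the consequence that a minimal $M$ containing a $V_{\langle S\rangle}$-translated torsion point must be finite by Lemma~\ref{torsionorbit}(ii) is correct, making $M-M$ a finite set, not $X$.

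However, Step 3 is a genuine gap, not just a delicate point. What you need is \emph{exact} algebraic coincidence $\zeta^{\bfn_1}.m_1 - \zeta^{\bfn_2}.m_1 \in V_{\langle S\rangle}$, but the pigeonhole on the minimal $(H,S)$-pieces of Proposition~\ref{minunion} only places $\zeta^{\bfn_1}.m_1$ and $\zeta^{\bfn_2}.m_1$ in the same piece $M_j$; there is no reason why such a piece should be contained in a single $V_{\langle S\rangle}$-leaf. The standard way to get the desired identity would be via the trichotomy the paper uses later in Proposition~\ref{minclassify}: either (a) the partial orbit of $m_1$ is finite (Lemma~\ref{recurtorsion}(i) applies), or (b) every converging subsequence eventually lies in a $V_{\langle S\rangle}$-leaf (Lemma~\ref{recurtorsion}(ii) applies), or (c) there is a $V_{\langle S\rangle}$-pattern. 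Case (c) is exactly the problematic one: Definition~\ref{pattern} requires the limit point to already be a $V_{\langle S\rangle}$-translated torsion point before Proposition~\ref{nonisolated} can be invoked, which is precisely what you are trying to produce. In the paper, case (c) is handled by passing to $M - M$, which contains a $V_{\langle S\rangle}$-pattern at the origin, and then appealing to Lemma~\ref{minfull} itself to forbid $M-M=X$. So your route, if filled in the natural way, quietly re-uses the statement being proved.

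The paper's actual proof avoids this by a different mechanism: fix a prime $p$ and form the congruence subgroups $G^{(f)} = \{\bfn : \zeta^\bfn \equiv \mathrm{id} \pmod{p^f\Gamma}\}$, build by induction (using Proposition~\ref{minunion}, Baire category on the decomposition of $X$ into finitely many sums of minimal $(G^{(f+1)},S)$-pieces, and Proposition~\ref{nonisolated}) a nested chain $M=M^{(0)}\supset M^{(1)}\supset\cdots$ of minimal $(G^{(f)},S)$-invariant sets each still satisfying $M^{(f)}+M'=X$, take $x_\infty\in\bigcap_f M^{(f)}$, and observe that every $p$-power torsion point $y$ is \emph{fixed} by $G^{(f)}$ for $f$ large. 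Writing $y=u+u'$ with $u\in M^{(f)}$, $u'\in M'$ and applying a sequence $\bfn_k\in G^{(f)}_{\epsilon_k,S}$ driving $u$ to $x_\infty$, the identity $\zeta^{\bfn_k}.y=y$ forces $y-x_\infty\in M'$; the density of $p$-power torsion points then makes $M'$ dense, contradicting its minimality. This sidesteps the need to find an exact algebraic coincidence inside $M$ — it never needs Lemma~\ref{recurtorsion}.
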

\begin{proof}Assume there is such a pair of sets. By Lemma \ref{mingene} we may write $M=\bigcap_{\epsilon>0}\overline{G_{\epsilon,S}.x}\subset X$ and $M'=\bigcap_{\epsilon>0}\overline{G_{\epsilon,S}.x'}\subset X$.

Fix a prime number $p$. Define a sequence of congruence subgroups \begin{equation}G^{(f)}=\{\bfn\in\bZ^r:\zeta^\bfn\gamma\equiv\gamma(\text{mod }p^f\Gamma),\forall \gamma\in \Gamma\}\end{equation} for all $f\geq 0$. Then $G=G^{(0)}>G^{(1)}>G^{(2)}>\cdots$.

Furthermore, $G^{(f)}$ is of finite index in $G$ for all $\forall f\geq 0$.  To see this, observe the multiplication by any $\zeta^\bfn$ preserves $p^f\Gamma$ since it preserves $\Gamma$. Thus the multiplicative action $\zeta$ induces a $\bZ^r$-action on the finite abelian group $\Gamma/p^f\Gamma$, in other words, a group morphism from $G=\bZ^r$ to $\Aut(\Gamma/p^f\Gamma)$. $G^{(f)}$ is just the kernel of this morphism, which has finite index as $|\Aut(\Gamma/p^f\Gamma)|<\infty$.\newline

{\bf Claim. \it There exists a decreasing sequence of non-empty closed sets $M=M^{(0)}\supset M^{(1)}\supset M^{(2)}\supset\cdots$, such that $\forall f\geq 0$, $M^{(f)}$ is a minimal $(G^{(f)},S)$-invariant closed set and $M^{(f)}+M'=X$.}\newline

We prove the claim by induction. When $f=0$ the claim is part of the condition in the lemma. Suppose there is already a finite sequence $M=M^{(0)}\supset M^{(1)} \supset\cdots\supset M^{(f)}$ satisfying the claim, we want to construct $M^{(f+1)}\subset M^{(f)}$.

As $G^{(f+1)}$ is of finite index in $G^{(f)}$, Proposition \ref{minunion} applies and produces a finite collection of minimal $(G^{(f+1)},S)$-invariant closed sets $M^{(f+1)}_1,\cdots,M^{(f+1)}_N\subset M^{(f)}$, whose union is $M^{(f)}$. Applying Proposition \ref{minunion} to $G$ and $G^{(f+1)}$ we can also find another finite collection of minimal $(G^{(f+1)},S)$-invariant closed sets
$(M')^{(f+1)}_1,\cdots,(M')^{(f+1)}_{N'}\subset M'$ whose union is $M'$. It is already known that $M^{(f)}+M'=X$. Hence $\bigcup_{\substack{n=1,\cdots,N\\n'=1,\cdots,N'}}(M^{(f+1)}_n+(M')^{(f+1)}_{n'})=X$. In consequence, because all the $(M^{(f+1)}_n+(M')^{(f+1)}_{n'})$'s are closed there exists a pair $(n,n')$ such that $M^{(f+1)}_n+(M')^{(f+1)}_{n'}$ has non-empty interior. In particular, $M^{(f+1)}_n+(M')^{(f+1)}_{n'}$ contains a torsion point and an open neighborhood of it.

Therefore $M^{(f+1)}_n+(M')^{(f+1)}_{n'}$ contains a $V_{\langle S\rangle}$-pattern. By Lemma \ref{mingene}, $M^{(f+1)}_n=\bigcap_{\epsilon>0}\overline{(G^{(f+1)})_{\epsilon,S}.z}$ and $(M')^{(f+1)}_{n'}=\bigcap_{\epsilon>0}\overline{(G^{(f+1)})_{\epsilon,S}.z'}$ for some $z,z'\in X$. Therefore $A=M^{(f+1)}_n+(M')^{(f+1)}_{n'}$ verifies condition (\ref{fgnonisolated}) for $H=G^{(f+1)}$. By Proposition \ref{nonisolated}, $M^{(f+1)}_n+(M')^{(f+1)}_{n'}=X$. In particular, $M^{(f+1)}_n+M'=X$ as $(M')^{(f+1)}_{n'}\subset M'$. The inductive step is proved by setting $M^{(f+1)}=M^{(f+1)}_n$, this verifies the claim.\\

We fix a point $x_\infty$ in the limit set $\bigcap_{f=0}^\infty M^{(f)}$, which is non-empty by compactness of $X$.

For any given $f\in\bN$ take a point $y\in X$ of the form \begin{equation}\label{ppowertorsion}y=\pi(\tilde y),\text{ where }\tilde y\in p^{-f}\Gamma\subset\bR^{r_1}\oplus\bC^{r_2}\end{equation} Then $p^fy=0$ in $X$ and $y$ is a torsion point. By the claim, $y=u+u'$ where $u\in M^{(f)}$, $u'\in M'$. Since $M^{(f)}$ is minimal and contains $x_\infty$, $x_\infty\in\bigcap_{\epsilon>0}\overline{(G^{(f)})_{\epsilon,S}.u}$ by Lemma \ref{mingene}. According to Remark \ref{describeinv}, there is a sequence $\{\bfn_k\}_{k=1}^\infty$ such that $\bfn_k\in G^{(f)}_{\epsilon_k,S}$, where $\lim_{k\rightarrow\infty}\epsilon_k=0$, and \begin{equation}\label{minfull1}\lim_{k\rightarrow\infty}\zeta^{\bfn_k}.u=x_\infty.\end{equation}

Suppose $\tilde y=p^{-f}\gamma$ with $\gamma\in\Gamma$, then $\zeta^{\bfn_k}.\tilde y=p^{-f}\zeta^{\bfn}\gamma$. Since $\bfn\in G^{(f)}$, $\zeta^{\bfn}.\gamma\equiv\gamma(\text{mod } p^f\Gamma)$, therefore $\zeta^{\bfn_k}.\tilde y\equiv p^{-f}\gamma=\tilde y (\text{mod }\Gamma)$ and the projection $y=\pi(\tilde y)$ satisfies \begin{equation}\label{minfull2}\zeta^{\bfn_k}.y=y.\end{equation} Take the difference between (\ref{minfull1}) and (\ref{minfull2}), we obtain:
\begin{equation}\lim_{k\rightarrow\infty}\zeta^{\bfn_k}.u'=y-x_\infty.\end{equation}

As $\bfn_k\in G^{(f)}_{\epsilon_k,S}$ with $\epsilon_k\rightarrow0$, it follows from Remark \ref{describeinv} that $y-x_\infty$ belongs to $\bigcap_{\epsilon>0}\overline{(G^{(f)})_{\epsilon,S}.u'}$. As $M'$ is $(G,S)$-invariant, it is also $(G^{(f)},S)$-invariant; so $\bigcap_{\epsilon>0}\overline{(G^{(f)})_{\epsilon,S}.u'}\subset M'$ by definition. Thus $y-x_\infty\in M'$ for all points $y$ of the form (\ref{ppowertorsion}). However when $f$ tends to $\infty$, because $p^{-f}\Gamma$ becomes dense in $\bR^{r_1}\oplus\bC^{r_2}$, points $y$ of the form (\ref{ppowertorsion}) become dense in $X$, and so do the $(y-x_\infty)$'s since $x_\infty$ is fixed. Hence $M'$ is dense and as a closed set it must be $X$. This contradicts the minimality of $M'$ as a $(G,S)$-invariant closed set because $X$ contains proper $(G,S)$-invariant closed subsets. The proof is completed. \end{proof}

\begin{proposition}\label{minclassify}Suppose $M\subset X$ is a minimal $(G,S)$-invariant closed set. Then $M$ contains a $V_{\langle S\rangle}$-translated torsion point.\end{proposition}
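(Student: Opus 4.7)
My plan is to argue by contradiction: assume $M$ contains no $V_{\langle S\rangle}$-translated torsion point, and derive $M-M=X$, which will violate Lemma \ref{minfull} applied to the pair $(M,-M)$.

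First I fix any $x\in M$. Because $x$ is not a $V_{\langle S\rangle}$-translated torsion point, the contrapositive of Lemma \ref{recurtorsion}(ii) says that for all distinct $\bfm,\bfn\in\bZ^r$ the difference $\zeta^\bfm.x-\zeta^\bfn.x$ (as an element of $X$) is not equal to $v$ for any $v\in V_{\langle S\rangle}$; in particular, part (i) of that lemma gives that the map $\bfn\mapsto\zeta^\bfn.x$ is injective on $\bZ^r$. Next, Lemma \ref{nongroupapprox} together with the hypothesis $\dim P_S\geq 2$ guarantees that $G_{\epsilon,S}$ is infinite for every $\epsilon>0$, so by injectivity $G_{\epsilon,S}.x$ consists of infinitely many distinct points in the compact space $X$. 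A pigeonhole argument then produces, for every $\delta>0$, a pair $\bfn\neq\bfm$ in $G_{\epsilon,S}$ with $\|\zeta^\bfn.x-\zeta^\bfm.x\|<\delta$, and the resulting nonzero difference avoids $V_{\langle S\rangle}$ by the previous observation.

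These differences belong to the closed set $A_\epsilon:=\overline{G_{\epsilon,S}.x}+\overline{G_{\epsilon,S}.(-x)}$, whose intersection $A=\bigcap_{\epsilon>0}A_\epsilon$ equals $M+(-M)=M-M$ by Lemma \ref{fginvariant}. Since the origin $0\in A_\epsilon$ is trivially a $V_{\langle S\rangle}$-translated torsion point and the differences constructed above yield a sequence $z_k\in A_\epsilon$ with $z_k\to 0$ and $z_k-0\notin V_{\langle S\rangle}$, the set $A_\epsilon$ contains a $V_{\langle S\rangle}$-pattern for every $\epsilon>0$. The ``in particular'' clause of Proposition \ref{nonisolated} then forces $A=M-M=X$. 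On the other hand $-M$ is itself a minimal $(G,S)$-invariant closed set, since inversion on $X$ is a homeomorphism commuting with the linear action of each $\zeta^\bfn$; hence Lemma \ref{minfull} applied to the pair $(M,-M)$ gives $M+(-M)=M-M\neq X$, a contradiction.

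The step I expect to require the most care is the verification of the $V_{\langle S\rangle}$-pattern condition in Definition \ref{pattern}: each small difference $\zeta^\bfn.x-\zeta^\bfm.x$ must be interpreted as an honest vector in $\bR^{r_1}\oplus\bC^{r_2}$ near the origin via a local lift through $\pi$, and one must check that ``$z_k-0\notin V_{\langle S\rangle}$'' genuinely follows from the stronger statement $\zeta^\bfm.\tilde x-\zeta^\bfn.\tilde x\notin V_{\langle S\rangle}+\Gamma$ supplied by the contrapositive of Lemma \ref{recurtorsion}(ii). Once this bookkeeping is dispatched, the proof reduces to the three moves above: injectivity of the orbit, pigeonhole recurrence in $X$, and the combined application of Proposition \ref{nonisolated} with Lemma \ref{minfull}.
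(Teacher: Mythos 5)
Your argument is correct and is essentially the paper's own proof (its Case 3) reorganized into a direct contradiction: the paper first runs two preliminary cases, which by Lemma \ref{recurtorsion} are precisely the cases where $x$ itself is a $V_{\langle S\rangle}$-translated torsion point, and in the remaining case derives the same contradiction by observing that $A_\epsilon=\overline{G_{\epsilon,S}.x}-\overline{G_{\epsilon,S}.x}$ contains a $V_{\langle S\rangle}$-pattern at $0$, applying Proposition \ref{nonisolated} to conclude $M-M=X$, and contradicting Lemma \ref{minfull}. The bookkeeping you flag about lifting a small difference in $X$ to a short vector in $\bR^{r_1}\oplus\bC^{r_2}$ resolves exactly as you anticipate (if the short lift lay in $V_{\langle S\rangle}$ then the hypothesis of Lemma \ref{recurtorsion}(ii) would hold), so there is no gap.
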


Remark that together with Lemma \ref{mingene}, the proposition implies $M=\bigcap_{\epsilon>0}\overline{G_{\epsilon,S}.x}$ where $x$ is a $V_{\langle S\rangle}$-translated torsion point. As a result, $M$ is finite by Lemma \ref{torsionorbit}.

\begin{proof}[Proof of Proposition \ref{minclassify}] By Lemma \ref{mingene}, $M=\bigcap_{\epsilon>0}\overline{G_{\epsilon,S}.x}$ for some $x\in X$. We distinguish between three cases.\\

{\noindent\bf Case 1}. If $\exists\epsilon>0$ such that $G_{\epsilon,S}.x$ is finite then $x$ is a torsion point. Actually, because it follows from Lemma \ref{nongroupapprox} that $G_{\epsilon,S}$ is infinite, $\zeta^\bfm.x=\zeta^\bfn.x$ for some pair $\bfm\neq\bfn$ from $G_{\epsilon,S}$. By Lemma \ref{recurtorsion}, $x$ is of torsion so we are done. \\

{\noindent\bf Case 2}. Suppose $\exists\epsilon>0$ such that $G_{\epsilon,S}.x$ is infinite but for any coverging sequence $\{y_k\}_{k=1}^\infty\subset\overline{G_{\epsilon,S}.x}$, whose limit we denote by $y$, $y_k-y\in V_{\langle S\rangle}$ for sufficiently large $k$. (Here as in Definition \ref{pattern},  $y_k-y$ is regarded as a very short vector in $\bR^{r_1}\oplus\bC^{r_2}$ when $y_k$ is sufficiently close to $y$ and $y_k-y\rightarrow 0$ as $k\rightarrow\infty$.) In this case we claim $x$ is a $V_{\langle S\rangle}$-translated torsion point.

In fact since $G_{\epsilon,S}.x$ is infinite and $X$ is compact, there is always such a coverging sequence $\{y_k\}_{k=1}^\infty$ inside $G_{\epsilon,S}.x$ where the $y_k$'s are all distinct. By assumption for very large $k$, $y_k-y\in V_{\langle S\rangle}$ where $y=\lim_{k\rightarrow\infty}y_k\in \overline{G_{\epsilon,S}.x}$. Thus for $k$, $k'$ both sufficiently large , $y_k=y_{k'}+v$ for some $v\in V_{\langle S\rangle}$. Since $y_k=\zeta^{\bfn_k}.x$ and $y_k=\zeta^{\bfn_{k'}}.x$ for some $\bfn_k,\bfn_{k'}\in G_{\epsilon,S}$, $x$ is a $V_{\langle S\rangle}$-translated torsion point by Lemma \ref{recurtorsion}.\\

{\noindent\bf Case 3}. If neither Case 1 nor Case 2 holds, then for all given $\epsilon>0$, $G_{\epsilon,S}.x$ is infinite and  moreover there is a sequence $\{y_k\}_{k=1}^\infty\subset\overline{G_{\epsilon,S}.x}$ coverging to some $y\in \overline{G_{\epsilon,S}.x}$ such that $y_k-y\notin V_{\langle S\rangle}, \forall k$.

The opposite operator $x\mapsto -x$ on $X$ commutes with the $\bZ^r$-action. Therefore as $M$ is a minimal $(G,S)$-invariant closed set, so is $-M=\{-y:y\in M\}=\bigcap_{\epsilon>0}\overline{G_{\epsilon,S}.(-x)}$.
By Lemmas \ref{invpm} and \ref{fginvariant} $A=M-M$ is $(G,S)$-invariant and is equal to $\bigcap_{\epsilon>0}\left(\overline{G_{\epsilon,S}.x}-\overline{G_{\epsilon,S}.x}\right)$.

Let $A_\epsilon=\overline{G_{\epsilon,S}.x}-\overline{G_{\epsilon,S}.x}$. Remark for  $\{y_k\}_{k=1}^\infty$ and $y$  above, $\pi(y_k-y)\in A_\epsilon$ for all $k$. The earlier characterization of $\{y_k\}_{k=1}^\infty$ and $y$ actually says $A_\epsilon$ contains a $V_{\langle S\rangle}$-pattern at $0$. It follows from Proposition \ref{nonisolated} that $A_\epsilon=X$ for all $\epsilon>0$ and therefore $M-M=\bigcap_{\epsilon>0}A_\epsilon=X$; which contradicts Lemma \ref{minfull}. So Case 3 cannot happen. This concludes the proof of Proposition \ref{minclassify}.\end{proof}

\subsection{Proof of the main theorems}

\begin{proof}[Proof of Theorem \ref{denseorcentral'}]Theorem \ref{denseorcentral'}.(i) is already covered by Lemma \ref{torsionorbit}, so we only need to prove part (ii), i.e. for any $x\in X$ which is not a $V_{\langle S\rangle}$-translated torsion point, the set $A_\epsilon=\overline{G_{\epsilon,S}.x}$ is equal to $X$ for all $\epsilon$.

Let $A=\bigcap_{\epsilon>0}\overline{G_{\epsilon,S}.x}$, which is $(G,S)$-invariant by Lemma \ref{cosetcomposlice}. It contains a minimal $(G,S)$-invariant set by Lemma \ref{minZorn}, thus contains a $V_{\langle S\rangle}$-translated torsion point $y$ by Proposition \ref{minclassify}.

Fix an arbitrary $\epsilon>0$, we claim that $A_{\frac\epsilon2}$ contains a $V_{\langle S\rangle}$-pattern near the given point $y$; i.e. a sequence $\{y_k\}_{k=1}^\infty$ converging to $y$, such that $y_k-y\notin V_{\langle S\rangle}$.  In fact, suppose not, then there are only two possibilities:\\

{\noindent\bf Case 1.} $y\in G_{\frac\epsilon2,S}.x$. Hence $x\in G_{\frac\epsilon2,S}.y$ because it is clear that $\bfn\in G_{\frac\epsilon2,S}$ if and only if $-\bfn\in G_{\frac\epsilon2,S}$.  Therefore by Lemma \ref{torsionorbit}, $x$ is a $V_{\langle S\rangle}$-translated torsion point itself; contradiction.\\

{\noindent\bf Case 2.} $y\notin G_{\frac\epsilon2,S}.x$, in other words $y$ is an accumulation point of $G_{\frac\epsilon2,S}.x$. Since $V_{\langle S\rangle}$-pattern is not allowed by assumption, there must be a convergent sequence $\{y_k\}_{k=1}^\infty\subset G_{\frac\epsilon2,S}.x$ whose limit is $y$, such that $y_k$ is in the $V_{\langle S\rangle}$-foliation through $y$ for sufficiently large $k$ (or equivalently, for all $k$ by choosing a subsequence). Hence the point $y_k$ itself is a $V_{\langle S\rangle}$-translated torsion point because $y$ is. But $y_k\in G_{\frac\epsilon2,S}.x$. Again by Lemma \ref{torsionorbit}, $x\in G_{\frac\epsilon2,S}.y_k$ is a $V_{\langle S\rangle}$-translated torsion point and this contradicts the assumption on $x$.

Therefore the claim that $A_{\frac\epsilon2}$ contains a $V_{\langle S\rangle}$-pattern is verified. But then it follows from Proposition \ref{nonisolated} that $A_\epsilon=X$. This establishes Theorem \ref{denseorcentral'}.\end{proof}

\begin{proof}[Proof of Theorem \ref{denseorcentral}] As via the transformation $\psi$, the actions $\alpha:\bZ^r\curvearrowright\bT^d$ and $\zeta:\bZ^r\curvearrowright X$ are equivalent, it suffices to prove the part concerning $X$ in Theorem \ref{denseorcentral} and the analogous claim about $\bT^d$ would follow.

If $x\in X$ is a $V_{\langle S\rangle}$-translated torsion point, then by Lemma \ref{central}, the set (\ref{Xpartialorb}) is not dense.

On the other hand, if $x$ is not a $V_{\langle S\rangle}$-translated torsion point, then by Theorem \ref{denseorcentral'}, $\overline{G_{\epsilon,S}.x}=X$. Because the set (\ref{Xpartialorb}) contains $G_{\epsilon,S}.x$ it is also dense in $X$, which completes the proof.\end{proof}

{\noindent\bf Acknowledgments:} I want to thank my thesis advisor Prof. Elon Lindenstrauss for initiating the larger project of which this paper is a part, and for his constant support. Thanks also go to the referee for helpful comments. I am grateful to the Hebrew University of Jerusalem for its hospitality.

\begin{bibdiv}
\begin{biblist}

\bib{B83}{article}{
   author={Berend, Daniel},
   title={Multi-invariant sets on tori},
   journal={Trans. Amer. Math. Soc.},
   volume={280},
   date={1983},
   number={2},
   pages={509--532},
   issn={0002-9947},
   review={\MR{716835 (85b:11064)}},
}

\bib{BY97}{article}{
   author={Boyle, Mike},
   author={Lind, Douglas},
   title={Expansive subdynamics},
   journal={Trans. Amer. Math. Soc.},
   volume={349},
   date={1997},
   number={1},
   pages={55--102},
   issn={0002-9947},
   review={\MR{1355295 (97d:58115)}},
}

\bib{ESS02}{article}{
   author={Evertse, J.-H.},
   author={Schlickewei, H. P.},
   author={Schmidt, W. M.},
   title={Linear equations in variables which lie in a multiplicative group},
   journal={Ann. of Math. (2)},
   volume={155},
   date={2002},
   number={3},
   pages={807--836},
   issn={0003-486X},
   review={\MR{1923966 (2003f:11037)}},
}

\bib{EL04}{article}{
   author={Einsiedler, Manfred},
   author={Lind, Douglas},
   title={Algebraic $\bZ^d$-actions on entropy rank one},
   journal={Trans. Amer. Math. Soc.},
   volume={356},
   date={2004},
   number={5},
   pages={1799--1831 (electronic)},
   issn={0002-9947},
   review={\MR{2031042 (2005a:37009)}},
}

\bib{EL03}{article}{
   author={Einsiedler, Manfred},
   author={Lindenstrauss, Elon},
   title={Rigidity properties of $\bZ^d$-actions on tori and
   solenoids},
   journal={Electron. Res. Announc. Amer. Math. Soc.},
   volume={9},
   date={2003},
   pages={99--110 (electronic)},
   issn={1079-6762},
   review={\MR{2029471 (2005d:37007)}},
}

\bib{F67}{article}{
   author={Furstenberg, Harry},
   title={Disjointness in ergodic theory, minimal sets, and a problem in
   Diophantine approximation},
   journal={Math. Systems Theory},
   volume={1},
   date={1967},
   pages={1--49},
   issn={0025-5661},
   review={\MR{0213508 (35 \#4369)}},
}

\bib{KS96}{article}{
   author={Katok, A.},
   author={Spatzier, R. J.},
   title={Invariant measures for higher-rank hyperbolic abelian actions},
   journal={Ergodic Theory Dynam. Systems},
   volume={16},
   date={1996},
   number={4},
   pages={751--778},
   issn={0143-3857},
   review={\MR{1406432 (97d:58116)}},
}

\bib{LW10}{article}{
   author={Lindenstrauss, Elon},
   author={Wang, Zhiren},
   title={Topological self-joinings of Cartan actions by toral automorphisms},
   journal={},
   volume={},
   date={2010},
   pages={preprint},
   issn={},
   review={},
}

\bib{S95}{book}{
   author={Schmidt, Klaus},
   title={Dynamical systems of algebraic origin},
   series={Progress in Mathematics},
   volume={128},
   publisher={Birkh\"auser Verlag},
   place={Basel},
   date={1995},
   pages={xviii+310},
   isbn={3-7643-5174-8},
   review={\MR{1345152 (97c:28041)}},
}

\bib{W10}{article}{
   author={Wang, Zhiren},
   title={Quantitatitve density under higher rank abelian algebraic toral actions},
   journal={Int. Math. Res. Not.},
   volume={},
   date={2010},
   pages={},
   issn={},
   doi={10.1093/imrn/rnq222},
}

\end{biblist}
\end{bibdiv}

\end{document}